\renewcommand{\Re}{\mathop{\rm Re}\nolimits}
\renewcommand{\Im}{\mathop{\rm Im}\nolimits}
\newcommand{\id}{\mathop{\rm id}\nolimits}
\newcommand{\diag}{\mathop{\rm diag}\nolimits}
\newcommand{\tr}{\mathop{\rm tr}\nolimits}
\newcommand{\Ker}{\mathop{\rm Ker}\nolimits}
\newcommand{\la}{\lambda}
\newcommand{\p}{\partial}
\newcommand{\e}{\varepsilon}
\newcommand{\C}{{\mathbb C}}
\newcommand{\R}{{\mathbb R}}
\newcommand{\Z}{{\mathbb Z}}
\newcommand{\T}{{\mathbb T}}
\newcommand{\N}{{\mathbb N}}
\newcommand{\OO}{{\cal O}}
\newcommand{\A}{{\mathfrak A}}
\newcommand{\const}{\mathop{\rm const}\nolimits}
\newcommand{\supp}{\mathop{\rm supp}\nolimits}
\newcommand{\linspan}{\mathop{\rm span}\nolimits}
\theoremstyle{plain}
\newtheorem{theorem}{Theorem}[section]
\newtheorem{lemma}[theorem]{Lemma}
\newtheorem{proposition}[theorem]{Proposition}
\theoremstyle{definition}
\newtheorem{definition}[{}]{Definition}
\theoremstyle{remark}
\def\om{\omega}
\def\a{\alpha}
\def\tri{\triangle}
\def\b{\beta}
\def\s{\sigma}
\numberwithin{equation}{section}
\begin{document}
\author{Sergei Kuksin and Galina Perelman}
\title{Vey theorem in infinite dimensions and \\
its application to KdV}
\date{}
\maketitle

\begin{abstract}
 We consider an integrable infinite-dimensional
Hamiltonian system in a Hilbert space
$H=\{u=(u_1^+,u_1^-; u_2^+,u_2^-;.\dots)\}$
with  integrals $I_1, I_2,\dots$ which can be written
as $I_j=\frac{1}{2}|F_j|^2$, where $F_j:H\rightarrow \R^2$,
$F_j(0)=0$ for $j=1,2,\dots$\,.
We assume that the maps $F_j$ define a germ of an analytic diffeomorphism
$F=(F_1,F_2,\dots):H\rightarrow H$,
such that $dF(0)=id$, $(F-id)$ is a $\kappa$-smoothing map ($\kappa\geq 0$)
and some other mild restrictions on $F$ hold.
Under these assumptions we show that the maps $F_j$
may be modified to maps $F_j^\prime$ such that
$F_j-F_j^\prime=O(|u|^2)$ and   each $\frac12|F'_j|^2$ still
 is an integral of motion.
Moreover, these maps jointly
define a germ  of an analytic
symplectomorphism $F^\prime: H\rightarrow H$,
 the germ $(F^\prime-id)$ is $\kappa$-smoothing, and each $I_j$ is
an analytic function of the vector $(\frac12|F'_j|^2,j\ge1)$.
 Next we show that the theorem with $\kappa=1$
applies to the KdV equation.
It implies that in the vicinity of the origin in a functional space
KdV admits the Birkhoff normal form and  the integrating
transformation has the form
`identity plus a 1-smoothing analytic map'.
\end{abstract}

\tableofcontents

\setcounter{section}{-1}

\section{Introduction}
\label{s0}

In his celebrated  paper \cite{V78} J.~Vey proved a local version of the
 Liouville-Arnold theorem which we
now state for the case of an  elliptic singular point.~\footnote{Vey's result applies as well to
 hyperbolic singular points and to singular points of mixed type.}
 Consider the standard symplectic linear space $(\R^{2n}_x,\omega_0)$,
 $\omega_0=\sum_{j=1}^ndx_j\wedge dx_{n+j}$.
 Let  $H(x)=O(|x|^2)$ be a germ of an analytic function
 \footnote{Here and everywhere below `a germ' means a germ at zero of
 a function or a map, defined in the vicinity
 of the origin.}
 and $V_H$ be the corresponding Hamiltonian vector field. It has a singularity
 at zero and we assume that in a suitable neighbourhood $\OO$ of the origin, $H$ has
 $n$ commuting analytic integrals  $H_1=H, H_2,\dots,H_n$ such that $H_j(x)=O(|x|^2)$
 for each $j$,
the quadratic forms $d^2H_j(0)$, $1\le j\le n$, are linearly independent
 and for all sufficiently small numbers
$\delta_1,\dots,\delta_n$ we have $\{x:\, H_j(x)=\delta_j\;\forall\,j\}\Subset \OO$.
Then in the vicinity of the origin exist symplectic   analytic coordinates
 $\{y_1,\dots,y_{2n}\}$ (i.e.
  $\sum_{j=1}^ndy_j\wedge dy_{n+j}=\omega_0$) such that   each hamiltonian $H_r(x)$
 may be written as $H_r(x)=\hat H_r(I_1,\dots,I_n)$,
$I_j=\frac12(y_j^2+y_{n+j}^2)$, where $\hat H_1,\dots,\hat H_n$ are germs of analytic
 functions on $\R^n$.

Vey's proof relies on the Artin theorem on a system of analytic equations, so it
 applies only to analytic finite-dimensional Hamiltonian systems. The theorem was developed
 and generalised in \cite{E84,  I89, E90, Z05}. In \cite{E84,  E90} Eliasson suggested a
 constructive proof of the theorem, which  applies both to smooth and analytic
  hamiltonians and may be generalised to infinite-dimensional
 systems. In this work we use  Eliasson's arguments to get an infinite-dimensional
 version of Vey's theorem, applicable to integrable Hamiltonian PDE.
Namely, we consider the $l_2$-space $h^0$, formed by sequences
 $u=(u_1^+,u_1^-, u_2^+, u_2^-,\dots)$,
provide it with the symplectic form $\omega_0=\sum_{j=1}^\infty du_j^+\wedge du^-_j$,
 and include $h^0$
in a scale $\{h^j,j\in\R\}$ of weighted $l_2$-spaces. Let us take any space $h^m$,
 $m\ge0$, and in a neighbourhood $\OO$ of the origin in $h^m$ consider commuting analytic
 hamiltonians $I_1,I_2,\dots$. We assume that $I_j=O(\|u\|^2_m)\ge0$ $\forall\,j$
 and that this system of functions is {\it regular} in the following sense: There are
 analytic maps $F_j:\OO\to \R^2,\  j\ge1$, such that $I_j=\frac12|F_j|^2$ and

i) the map $F=(F_1,F_2,\dots):\OO\to h^m$ is an analytic diffeomorphism on its image,

ii) $dF(0)=\,$id and the mapping $F-\,$id analytically maps $\OO\to h^{m+\kappa}$
 for some $\kappa\ge0$ (i.e., $F-\,$id is $\kappa$-smoothing). Moreover, for
 any $u\in\OO$ the linear operator $dF(u)^*-\,$id continuously maps $h^m$ to $h^{m+\kappa}$.

We also make some mild assumptions concerning Cauchy majorants for the
 maps $F-\,$id and $dF(u)^*-\,$id, see in Section~\ref{s1}. The main result
 of this work is the following theorem:

\begin{theorem}\label{tA}
Let the system of commuting analytic functions $I_1,I_2,\dots$ on $\OO\subset h^m$
 is regular. Then there are analytic maps $F'_j:\OO'\to\R^2$, defined on a suitable
 neighbourhood $0\in\OO'\subset \OO$, such that the map
 $F'=(F'_1,F'_2,\dots):\OO'\to h^m$ satisfies properties i), ii), it is a
 symplectomorphism, the functions $I'_j=\frac12|F'_j|^2$ commute and their
 joint level-sets define the same foliation of $\OO'$ as level-sets of the original
functions $I_j$. In particular, each $I_j$ is an analytic function of the variables
 $I'_1,I'_2,\dots$.
\end{theorem}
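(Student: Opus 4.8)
The plan is to follow Eliasson's deformation argument, reducing Theorem~\ref{tA} to a Moser-type normal-form statement for a pair of symplectic forms sharing the singular elliptic Lagrangian foliation, and to carry out every step with estimates in the scale $\{h^s\}$ furnished by the assumed Cauchy-majorant bounds.

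I would first move to the target side of $F$. Write $v=F(u)$, set $J_j:=\tfrac12|v_j|^2$, so that $F^*J_j=I_j$, and let $\omega_1:=(F^{-1})^*\omega_0$. Since $dF(0)=\id$, the forms $\omega_0$ and $\omega_1$ agree at the origin; since the $I_j$ Poisson-commute for $\omega_0$ and $F^*J_j=I_j$, the $J_j$ Poisson-commute for $\omega_1$; hence the foliation $\FF_0$ of $F(\OO)$ by the joint level sets $\{J_j=c_j,\ j\ge1\}$ is Lagrangian for $\omega_0$ and for $\omega_1$. It then suffices to produce a germ at $0\in h^m$ of an analytic diffeomorphism $\Phi$ with $d\Phi(0)=\id$, with $\Phi-\id$ being $\kappa$-smoothing, $\Phi=\id+O(\|v\|_m^2)$, $\Phi^*\omega_0=\omega_1$, and $\Phi$ carrying leaves of $\FF_0$ to leaves of $\FF_0$. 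Indeed, put $F':=\Phi\circ F$ on a small enough $\OO'$: then $(F')^*\omega_0=F^*\Phi^*\omega_0=F^*\omega_1=\omega_0$, so $F'$ is a symplectomorphism; $F'$ inherits i), ii) from $F$ and $\Phi$ (this is where the $\kappa$-smoothing of $\Phi-\id$ and the hypothesis on $dF(u)^*-\id$ enter); $I'_j=(F')^*J_j=F^*(\Phi^*J_j)$, and as $\Phi$ preserves $\FF_0$ one has $\Phi^*J_j=g_j(J_1,J_2,\dots)$ for an analytic map $g=(g_j)$ with $g(0)=0$, $dg(0)=\id$, whence $I'_j=g_j(I_1,I_2,\dots)$, the joint level sets of $(I'_j)$ and $(I_j)$ coincide, and $g^{-1}$ exhibits each $I_j$ as an analytic function of $(I'_k)$; commutativity $\{I'_i,I'_j\}_{\omega_0}=(F')^*\{J_i,J_j\}_{\omega_0}=0$ and $F'-F=(\Phi-\id)\circ F=O(\|u\|_m^2)$ are then immediate.

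To build $\Phi$, interpolate by $\omega_t:=(1-t)\omega_1+t\omega_0$, $t\in[0,1]$ --- nondegenerate near $0$, each making $\FF_0$ Lagrangian --- and look for the time-$1$ flow of the vector field $X_t$ defined by $\iota_{X_t}\omega_t=\alpha$ with $d\alpha=\omega_1-\omega_0$; then $\Phi^*\omega_0=\omega_1$. The crux is to choose the primitive $\alpha$ so that $\alpha$ vanishes to second order at $0$ (giving $\Phi=\id+O(\|v\|_m^2)$ and the right majorant/smoothing bounds) and so that $X_t$ is a \emph{foliate} vector field for $\FF_0$ (its flow permutes leaves), forcing $\Phi$ to preserve $\FF_0$. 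A generic primitive achieves neither. Eliasson's device is to use the model torus action --- the $\omega_0$-Hamiltonian flows generated by the $J_j$, i.e.\ the rotations $(v_j^{+},v_j^{-})\mapsto R_{s_j}(v_j^{+},v_j^{-})$, which preserve every leaf of $\FF_0$ --- to average: replacing $\omega_1-\omega_0$ by its average over this action changes the problem only by a closed $2$-form whose primitive annihilates $T\FF_0$ and can be absorbed into a preliminary leaf-fixing symplectomorphism. For the resulting rotation-invariant closed $2$-form one invokes its normal structure --- a combination of the $dJ_i\wedge dJ_j$ and of $dJ_i\wedge\mu_j$, where $\mu_j:=v_j^{+}\,dv_j^{-}-v_j^{-}\,dv_j^{+}\;(=2J_j\,d\theta_j)$ --- together with a relative Poincar\'e lemma adapted to the degenerating torus foliation, to exhibit a primitive $\alpha=\sum_j\beta_j(J)\,\mu_j+\sum_j b_j(J)\,dJ_j$ with coefficients analytic functions of $(J_1,J_2,\dots)$ alone, hence single-valued and, since $\beta_j$ vanishes at $J=0$, analytic across the elliptic locus $\{J_j=0\}$. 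For such an $\alpha$ one checks that $X_t=\omega_t^{-1}(\alpha)$ is foliate; integrating it --- while propagating the Cauchy-majorant and $\kappa$-smoothing bounds through the defining ODE, using the hypotheses of Section~\ref{s1} --- produces $\Phi$, the $\sum_jb_j(J)\,dJ_j$-part being responsible for an innocuous leaf-fixing component.

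The main obstacle is exactly the singular-foliation analysis --- the averaging, the structure theorem for invariant $2$-forms, the relative Poincar\'e lemma through the elliptic locus, and the de Rham/Schwarz-type input that rotation-invariant analytic objects are analytic in the actions $J_j$ --- carried out uniformly in the infinitely many degrees of freedom and compatibly with the scale $\{h^s\}$ and the $\kappa$-smoothing. Verifying that the averaging, the primitive $\alpha$, the Moser field $X_t$, and its flow all inherit the required majorant and smoothing estimates is the technical heart of the proof, and it is precisely what the "mild assumptions on Cauchy majorants" of Section~\ref{s1} are designed to enable.
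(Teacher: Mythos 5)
Your reduction (pass to $v=F(u)$, set $J_j=\tfrac12|v_j|^2$, $\omega_1=(F^{-1})^*\omega_0$, build $\Phi$ with $\Phi^*\omega_0=\omega_1$ preserving the foliation, put $F'=\Phi\circ F$) and your overall scheme (Moser deformation along $\omega_t$, averaging over the torus action $v_j\mapsto e^{i\theta_j}v_j$, with the commutation of the integrals forcing the leaves to be $\omega_1$-Lagrangian) coincide with the paper's, which is Eliasson's two-step argument from \cite{E90} carried out in the class $\A_{m,\kappa}$. The genuine divergence is in how the foliate primitive is produced, and it is exactly the step you leave unexecuted. You propose a structure theorem for rotation-invariant closed $2$-forms plus a relative Poincar\'e lemma through the elliptic locus, yielding a primitive $\alpha=\sum_j\beta_j(J)\mu_j+\sum_jb_j(J)\,dJ_j$ with coefficients analytic in the infinite vector of actions. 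The paper avoids any such structure theorem: it runs \emph{two} successive Moser flows. The first arranges $M\omega_1=\omega_0$ (averaged symplectic form equals the model one), using the field $\hat J^\tau(v)(MW)(v)$; the second uses the primitive $\alpha_\Delta-df=(W-Y)\,dv$, where $f$ solves the linear system $(df,\chi_j)=(\alpha_\Delta,\chi_j)$ by Moser's explicit iterated-averaging formula $f=\sum_l M_1\cdots M_{l-1}L_lh_l$ (Lemma~\ref{l2.2}), whose solvability rests only on the compatibility relations $\chi_j(h_k)=\chi_k(h_j)$ supplied by Lemma~\ref{l2.1}. The only structural input needed is then that the corrected primitive annihilates the rotation fields, which together with the Lagrangian identity $i\Sigma_v=(i\Sigma_v)^\perp$ forces $V^\tau\in i\Sigma_v$, so the flow preserves each $|v_j|^2$ exactly (leafwise, not merely permuting leaves).

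This matters because your proposed tool is substantially harder to justify in infinite dimensions than what it replaces. Analyticity of functions of the infinitely many actions $(J_1,J_2,\dots)$ is delicate (note the paper only obtains analyticity of $I_j$ in $I'$ with respect to a special weighted norm, via Remark~2 and a reference to \cite{K2}), and a de~Rham/Schwarz-type statement that invariant analytic objects are analytic in the actions, uniformly over the scale $\{h^s\}$ and compatibly with $\kappa$-smoothing, is not available off the shelf. The paper's route converts all of this into convergence of one explicit series of averages of the components $h_j=iv_j\cdot W_j(v)$, which is controlled directly by the Cauchy-majorant hypotheses through the $\A_{m,\kappa}$ calculus of Section~\ref{s3} (Lemmas~\ref{l1.4}--\ref{l1.6} and \ref{l2.3}). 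So your plan is sound in outline and would deliver the theorem if the invariant-form decomposition were established, but as written the technical heart is replaced by a harder, unproved lemma; the paper's more elementary device is what actually makes the infinite-dimensional estimates close.
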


See Section~\ref{s1}  for a more detailed statement of the result and see
 Section~\ref{s4} for its proof. In Section~\ref{s3} we develop some infinite-dimensional
 techniques, needed for our arguments.

Theorem~\ref{tA} applies to study an integrable Hamiltonian PDE in the vicinity
 of an equilibrium. In Section~\ref{s2} we apply it to
 the KdV equation under  zero-meanvalue periodic
boundary conditions
\begin{equation}\label{3.2}
\dot u(t,x)=
\frac14u_{xxx}+6uu_x,\quad x\in S^1=\R/2\pi\Z,
\quad \int\limits_0^{2\pi}udx=0,
\end{equation}
 and to  the
 whole KdV hierarchy. The equations are regarded as Hamiltonian systems in a Sobolev space
 $H_0^m$, $m\ge0$, of zero-meanvalue functions on $S^1=\R/2\pi\Z$. The space
 is given the norm $\|u\|_m=|(-\Delta)^{m/2}u|_{L_2}$
and is
 equipped with the symplectic form $\nu$, where $\nu\big(u(\cdot),v(\cdot)\big)=
-\int_{S^1} (\p/\p x)^{-1}u(x)\cdot v(x)\,dx$. If $m\ge1$, then \eqref{3.2} is a Hamiltonian system
in $H^m_0$ with the analytic hamiltonian
$$
h_{KdV}(u)=\int \Big(-\frac18\, u_x^2+u^3\Big)\,dx.
$$

To apply Theorem~\ref{tA} we first normalise the symplectic form $\nu$ to
 the canonical form $\omega_0$. To do this we write any $u(x)\in H_0^m$ as
 Fourier series, $u(x)=\pi^{-1/2}\sum_{s=1}^\infty (u^+_s\cos sx - u^-_s\sin sx)$, and consider the map
$$
T:u(x)\mapsto v=(v_1^\pm,v_2^\pm,\dots),\qquad v_j^\pm=u_j^\pm j^{-1/2} \quad
\forall\,j.
$$
Then $T:H_0^m\to h^{m+1/2}$ is an isomorphism for any $m$, and $T^*\omega_0=\nu$.

The Lax operator for the KdV hierarchy is the Sturm-Liouville operator $L_u=-\p^2/\p x^2
-u(x)$. Let $\gamma_1, \gamma_2,\dots$ be the lengths of its spectral gaps. Then
$\gamma_j^2(u)$, $j\ge1$, are commuting analytic functionals which are integrals of
motion for all equations from the hierarchy. In \cite{Kap91} T.~Kappeler
 suggested a way to use the spectral theory of  the operator $L_u$ to construct
 germs of analytic maps $\Psi^j:h^{m+{1/2}}\to\R^2$, $j\ge1$, such that
 $\frac12|\Psi^j(v)|^2=\frac{\pi}{2j}
 \gamma_j^2(T^{-1}v)$.  In Sections~\ref{s5} we
show that the map $\Psi=(\Psi^1,\Psi^2,\dots)$
 meets assumptions i),~ii) with $\kappa=1$
(see Theorem~\ref{t3.1}). So the system of integrals 
$I_j(v)=\frac12 |\Psi^j(v)|^2$,
$j\ge1$, is regular.
 Accordingly,
 Theorem~\ref{tA} implies the following result (see Section~\ref{s2}):

\begin{theorem}\label{tB}
For any $m\ge0$ there exists a germ of an analytic symplectomorphism
$\overline\Psi:(H_0^m,\nu)\to(h^{m+1/2},\omega_0)$, $d\overline\Psi(0)=T$, such that

a) the germ $\overline\Psi-T$ defines a germ of an analytic mapping $H_0^m\to h^{m+3/2}$;

b) each $\gamma_j^2\,, j\ge1$, is an analytic function of the vector
 $\bar I= (\frac12|\overline\Psi^j(u)|^2, j\ge1)$. Similar, a hamiltonian of
 any equation from
the KdV hierarchy is an analytic function of $\bar I$ (provided that $m$ is
 so big that this hamiltonian is analytic on the space $H_0^m$);

c) the maps $\overline\Psi$, corresponding to different $m$, agree. That is, if $\overline\Psi_{m_j}$ corresponds to $m=m_j$,
$j=1,2$, then $\overline\Psi_{m_1}=\overline\Psi_{m_2}$ on $h^{\max(m_1,m_2)}$.
\end{theorem}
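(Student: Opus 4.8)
The plan is to deduce Theorem~\ref{tB} from Theorem~\ref{tA} applied to the system of integrals $I_j(v)=\tfrac12|\Psi^j(v)|^2$ on the scale $\{h^j\}$, and then to transport everything back through the isomorphism $T:(H_0^m,\nu)\to(h^{m+1/2},\omega_0)$. First I would fix $m\ge0$, set $m'=m+1/2$, and invoke the result of Kappeler (to be established as Theorem~\ref{t3.1} in Section~\ref{s5}) that the map $\Psi=(\Psi^1,\Psi^2,\dots)$ is a germ of an analytic diffeomorphism on $h^{m'}$ with $d\Psi(0)=\id$, that $\Psi-\id$ is $1$-smoothing (i.e.\ maps $h^{m'}\to h^{m'+1}$), that $d\Psi(u)^*-\id$ maps $h^{m'}\to h^{m'+1}$, and that the mild Cauchy-majorant hypotheses of Section~\ref{s1} hold. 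Since the $\gamma_j^2$ commute and are nonnegative, and $\tfrac12|\Psi^j|^2=\tfrac{\pi}{2j}\gamma_j^2(T^{-1}\cdot)$, the pull-backs $I_j=\tfrac12|\Psi^j|^2$ form a regular system of commuting analytic integrals on a neighbourhood of $0$ in $h^{m'}$ in the sense of hypotheses i),~ii).

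Next I would apply Theorem~\ref{tA} to this system, obtaining a neighbourhood $\OO'\ni0$ in $h^{m'}$ and analytic maps $\Psi'_j:\OO'\to\R^2$ such that $\Psi'=(\Psi'_1,\Psi'_2,\dots):\OO'\to h^{m'}$ is an analytic symplectomorphism (for $\omega_0$) with $d\Psi'(0)=\id$, $\Psi'-\id$ is $1$-smoothing, and the joint level sets of the $I'_j=\tfrac12|\Psi'_j|^2$ give the same foliation as those of the $I_j$; in particular each $I_j$, hence each $\gamma_j^2$, is an analytic function of the vector $(I'_1,I'_2,\dots)$. I would then define $\overline\Psi:=\Psi'\circ T:(H_0^m,\nu)\to(h^{m'},\omega_0)$. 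Since $T^*\omega_0=\nu$ and $\Psi'^*\omega_0=\omega_0$, the composition satisfies $\overline\Psi^*\omega_0=\nu$, so $\overline\Psi$ is a symplectomorphism; and $d\overline\Psi(0)=d\Psi'(0)\,T=T$. For part~a), write $\overline\Psi-T=(\Psi'-\id)\circ T$; since $T:H_0^m\to h^{m'}$ is an isomorphism and $\Psi'-\id:h^{m'}\to h^{m'+1}$, the composite maps $H_0^m\to h^{m'+1}=h^{m+3/2}$ analytically, which is exactly a). For part~b), $\tfrac12|\overline\Psi^j(u)|^2=\tfrac12|\Psi'_j(Tu)|^2=I'_j(Tu)$, so the vector $\bar I$ is exactly $(I'_j\circ T)$; since each $\gamma_j^2$ is an analytic function of $(I'_1,I'_2,\dots)$, it is an analytic function of $\bar I$. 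The same holds for any hamiltonian $h$ of the hierarchy that is analytic on $H_0^m$: such a hamiltonian is an analytic function of the $\gamma_j^2$ (a known consequence of the integrability of the hierarchy via the spectral data), hence of $\bar I$.

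For part~c), the compatibility across different $m$, I would argue that the constructions are intrinsic. The maps $\Psi^j$ from Kappeler's theory are defined by spectral data of $L_u$ and do not depend on the choice of $m$; and the Eliasson-type construction in the proof of Theorem~\ref{tA} should likewise be carried out so that the iterative scheme producing $\Psi'$ uses only the functions $I_j$ and their differentials, which restrict consistently from $h^{m_1}$ to $h^{m_2}$ for $m_1\le m_2$. Provided Theorem~\ref{tA} is stated (or can be reproved) with this functoriality built in — i.e.\ the symplectomorphism $F'$ for the scale restricts correctly under inclusion of spaces — then $\overline\Psi_{m_1}=\overline\Psi_{m_2}$ on $h^{\max(m_1,m_2)}$ follows by uniqueness of the analytic germ. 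I expect the main obstacle to be precisely this last point: verifying that the construction in Theorem~\ref{tA} genuinely commutes with the scale inclusions, since the auxiliary neighbourhoods $\OO'$ and the homological-equation solutions produced at each step of Eliasson's iteration must be chosen uniformly across the scale rather than re-chosen for each $m$; the smoothing hypotheses in ii) are what make this uniformity possible, but the bookkeeping is delicate. The verification of the Cauchy-majorant hypotheses for $\Psi$ (deferred to Section~\ref{s5}) is the other technically demanding input, but it is independent of the deduction above.
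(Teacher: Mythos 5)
Your proposal follows essentially the same route as the paper: the paper deduces Theorem~\ref{tB} by combining Theorem~\ref{t3.1} (the spectral construction of $\Psi$ with $\Psi-\id\in\A_{m',1}$) with Theorem~\ref{t2.1} and then composing with $T$, exactly as you do. The one point you leave conditional --- whether the Eliasson-type construction commutes with the scale inclusions, needed for part~c) --- is precisely what the paper settles by Remark~3 to Theorem~\ref{t2.1} (the map $\Psi^+$ is obtained from $\Psi$ by an explicit procedure independent of $m$), so your hedge is resolved affirmatively and the argument is complete.
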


Moreover, Remark~4) to Theorem~\ref{t2.1} with $k=2$ and Remark at the end of Section~\ref{s5}
jointly imply that the map $\bar\Psi$ equals $\Psi\circ T$ up to $O(u^3)$:
\begin{equation}\label{000}
   \|\Psi\circ T(u)-\bar\Psi(u)\|_{h^{m+3/2}}\le {\rm const}\, \|u\|^3_m.
\end{equation}
In particular, $d^2\bar\Psi(0)=\psi_2\circ T$, where the map 
$v\mapsto \psi_2(v)$ is given by relations \eqref{5.20}. 
\medskip

Assertion b) of the theorem means that the map $\overline\Psi$ puts KdV (and other
equations from the KdV hierarchy) to the Birkhoff normal form.

In a number of publications, starting with \cite{Kap91}, T.~Kappeler with collaborators
 established existence of a global analytic symplectomorphism
$$
\Psi:(H_0^m,\nu)\to (h^{m+1/2},\omega_0),\quad d\Psi(0)=T,
$$
which satisfies assertion b) of Theorem~\ref{tB}, see in \cite{KaP}. Our work
 shows that a local version of Kappeler's result follows from  Vey's theorem. What
 is  more important, it specifies the result by stating that a local transformation
which integrates   the KdV hierarchy may be chosen  `1-smoother than its
linear part'. This specification is crucial to study qualitative properties of perturbed
 KdV equations, e.g. see \cite{KP09}.

A global symplectomorphism $\Psi$ as above integrates the KdV equation, i.e. puts it to the Birkhoff
normal form. Similar, the linearised KdV equation $\dot u=u_{xxx}$
may be integrated by the (weighted) Fourier transformation $T$.
An integrating transformation $\Psi$  {\it is not unique} . \footnote{in difference with the mapping
to the action variables $u\mapsto I\circ\Psi(u)$, which {\it is unique.}}
For the linearised
 KdV we do not see this ambiguity since $T$ is the only linear integrating
symplectomorphism. In the KdV case the best transformation $\Psi$ is the one which is
 the most close to the linear map $T=d\Psi(0)$ in the sense that the map $\Psi-T$ is the most smoothing.
 Motivated by Theorem~\ref{tB} and some other arguments (see in \cite{KP09}), we
are certain that there exists a (global) integrating symplectomorphism $\Psi$ such
that $\Psi-T$ is 1-smoother than $T$.
\footnote{We are cautious not to claim that the symplectomorphism $\Psi$,
 constructed in \cite{KaP}, possesses this extra smoothness since it is normalised
 by the condition
$$
{\rm if}\;\;  u(x)\equiv u(-x),\;\;  {\rm then}\;\;
 \Psi(u)=v=(v_j^\pm, j\ge1),\;\; {\rm where}\;\; v_j^-=0 \; \forall\,j.
$$
It is not obvious that an optimal global symplectomorphism  satisfies this
 condition, and we do not know if the local symplectomorphism $\overline\Psi$
 from Theorem~\ref{tB} meets it.
}

In Proposition~\ref{p1} we show that if a germ of an integrating  analytic
 transformation $\Psi$ is such that $\Psi-T$ is $\kappa$-smoothing, then $\kappa\le3/2$.
We conjecture that the 1-smoothing is optimal.

\medskip\par
\noindent{\bf Acknowledgment.} We  thank H.~Eliasson for discussion of the Vey
theorem.

\section{The main theorem.}\label{s1}
Consider a scale of Hilbert spaces $\{h^m,\,m\in \R\}$.
A space $h^m$ is formed by {\it complex} sequences $u=(u_j\in\C,\, j\geq 1)$
and is regarded as a {\it real} Hilbert space with the Hilbert norm
\begin{equation} \label{1.1}
\|u\|_m^2=\sum\limits_{j\geq 1}j^{2m}|u_j|^2.
\end{equation}
We will denote by $\left<\cdot,\cdot\right>$
the scalar product in $h^0$:
$\left<u,v\right>=\sum u_j\cdot v_j=\Re \sum u_j\bar v_j.$
For any linear operator $A: h^m\rightarrow h^n$ we will
denote by $A^*: h^{-n}\rightarrow h^{-m}$ the operator, conjugated to
$A$ with respect to this scalar product.
\par Below we study germs or real-analytic maps
\footnote{In Section \ref{s5} we mostly work with complex-analytic maps, so there {\it
analytic} stands for {\it complex-analytic}.}
$$ F: {\cal O}_\delta(h^m)\rightarrow h^n,\quad F(0)=0,$$
where  ${\cal O}_\delta(h^m)=\{u\in h^m \,\big |\,\|u\|_m<\delta\}$
and $\delta>0$ depends on $F$. Abusing language we will say that $F$ is an
 analytic germ $F:h^m\to h^n$.
 Any analytic  germ $F=(F^1,F^2,\dots)$ can be written as an absolutely and uniformly
convergent series
\begin{equation}\label{series}
F^j(u)=\sum\limits_{N=1}^\infty F^j_N(u),\quad
F^j_N(u)=\sum\limits_{|\alpha|+|\beta|=N}
A^j_{\alpha\beta}u^\a \bar u^\b,
\end{equation}
where $\a,\,\b \in \Z_+^\infty$,
$\Z_+=\N\cup\{0\}$.
We will write that $F(u)=O(u^l)$ if in \eqref{series} $F^j_N(u)=0$
for $N<l$ and all $j$.

Clearly,
\begin{equation*}
|F(u)|\leq \underline{F}(|u|),\quad
\underline{F}^j(|u|)= \sum\limits_{N=1}^\infty\sum\limits_{|\alpha|+|\beta|=N}
|A^j_{\alpha\beta}| |u|^{\a +\b}\le\infty.
\end{equation*}
Here $|F(u)|=(|F^1(u)|,|F^2(u)|,\dots)$, $|u|=(|u_1|,|u_2|,\dots)$
and $|u|^{\a +\b}=\prod|u_j|^{\a_j+\b_j}$. The inequality is understood component-wise.

\begin{definition}
An analytic germ $F$ as above is called normally
analytic (n.a.) if $\underline{F}$ defines
a germ of a real analytic map
$h_R^m\rightarrow h_R^n$,
where the space $h_R^m$ is formed by real sequences $(u_j)$,
given the norm \eqref{1.1}.
That is, each $N$-homogeneous map 
${\underline F}_N^j(v)=\sum\limits_{|\alpha|+|\beta|=N}
|A^j_{\alpha\beta}| v^{\a +\b}$, where  $v\in h_R^m$,
satisfies
$\|{\underline F}_N(v)\|_n\leq C R^N\|v\|_m^N\ $
for suitable $C,\,R>0$.
\end{definition}

Take any $m\geq 0$ and $\kappa\geq 0$.
\begin{definition}
A n.a. germ
 $F: h^{m}\rightarrow  h^{m+\kappa}$
belongs to  $\A_{m,\kappa}$
if $F=O(u^2)$ and the adjoint map $dF(u)^*v$ is such that
\begin{equation}\label{1.5}
\underline{dF(|u|)^*|v|}=\Phi(|u|)|v|.
\end{equation}
Here the linear
 map $\Phi(|u|)=\Phi_F(|u|)\in {\cal L}(h_R^m,h_R^{m+\kappa})$
has non-negative matrix elements and defines an analytic germ
$|u|\mapsto\Phi(|u|)$, $h_R^m\rightarrow {\cal L}(h_R^m,h_R^{m+\kappa})$.
\end{definition}

The notion of a n.a. germ formalizes the method of Cauchy majorants in a way,
convenient for our purposes. We study the class of n.a. germs and its subclass
$\A_{m,\kappa}$ in Section~\ref{s3}.

We will write elements of the spaces $h^m$
as $u=(u_k\in \C,\,\, k\geq 1)$,
$u_k=u_k^++iu_k^-$, $u_k^\pm \in \R$, and provide  $h^m$,
$m\geq 0$, with a symplectic structure by means of the two-form
$\omega_0=\sum du_k^+\wedge  du_k^-$. This form may be written as
$\omega_0=idu\wedge du$.
 Here and below for any antisymmetric (in $h^0$) operator $J$
we denote by
$Jdu\wedge du$ the 2-form
\begin{equation}\label{2.0}
(Jdu\wedge du)(\xi,\eta)=<J\xi,\eta>.
\end{equation}
The form $\omega_0$ is exact, $\omega_0=d\alpha_0$,
where
$$\a_0=\frac{1}{2}\sum u_k^+du_k^--\frac{1}{2}\sum u_k^-du_k^+
=\frac{1}{2}(iu)du.$$
For a map $f: h^m\rightarrow h^{-m}$,
$f(u)du$ stands for the  one-form
$$(f(u)du)(\xi)=\sum\limits_{j=1}^\infty f_j(u)\cdot \xi_j
=\sum\limits_{j=1}^\infty \Re f_j(u) \bar\xi_j.$$
By $\{H_1,H_2\}$ we will denote the Poisson brackets of
functionals $H_1$ and $H_2$, corresponding to $\om_0$:
$\
\{H_1,H_2\}(u)=\left<i\nabla H_1(u),\nabla H_2(u)\right>.
$
Functionals $H_1$ and $H_2$  {\it commute}  if $\{H_1,H_2\}=0$.

\begin{theorem}\label{t2.1}
Assume that for some $m\geq 0$
there exists a real analytic germ
$\Psi: h^m\rightarrow h^m$ such that
\begin{itemize}
\item[i)] $d\Psi(0)=id$ and $(\Psi-id)\in \A_{m,\kappa}$ for some    $\kappa\geq 0$;
\item[ii)] the functionals $I^j(\Psi(u))=\frac{1}{2}\left|\Psi^j(u)\right|^2$,
 $j\ge1$, commute with each other.
\end{itemize}
Then there exists a germ $\Psi^+: h^m\rightarrow h^m$ 
 which satisfies i), ii) with the same $\kappa$, and such that
\begin{itemize}
\item[a)] foliation of the vicinity of the origin in $h^m$
by the sets
\begin{equation}\label{fol}
    \{\left|\Psi^j\right|^2=const_j,\,\forall j\};
\end{equation}
is the same as by the sets $\{\big|{\Psi^+}^j\big|^2=const_j,\,\forall j\}$.
\item[b)] the germ $\Psi^+$ is symplectic:
${\Psi^+}^*\om_0=\om_0$.
\end{itemize}
\end{theorem}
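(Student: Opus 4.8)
The plan is to follow Eliasson's constructive scheme adapted to the infinite-dimensional scale $\{h^m\}$. The starting point is that $\Psi$ already brings the integrals $I^j$ to the form $\frac12|\Psi^j|^2$ with $d\Psi(0)=\id$, but $\Psi$ need not be symplectic: the pulled-back form $\Psi^*\om_0$ is some other symplectic form $\om_1$ which, by i), equals $\om_0+O(u)$ and agrees with $\om_0$ at the origin. The functions $I^j=\frac12|u_j|^2$ Poisson-commute with respect to $\om_1$ by ii). The idea is to correct $\Psi$ by post-composing with a germ $\Phi:h^m\to h^m$, $\Phi=\id+O(u^2)$, $(\Phi-\id)\in\A_{m,\kappa}$, so that $\Phi$ intertwines $\om_1$ and $\om_0$ while preserving each level set $\{|u_j|^2=\const_j\}$ (equivalently, each $I^j$ is a function of $\frac12|\Phi^j|^2$ — in fact we will get $I^j\circ\Phi^{-1}=I^j$, so the foliation is literally preserved, giving a)). Then $\Psi^+=\Phi\circ\Psi$ is the desired germ.

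The construction of $\Phi$ is by a Moser-type deformation argument. First I would use commutativity of the $I^j$ with respect to $\om_1$ together with the fact that $d^2I^j(0)$ are in involution for $\om_0$ to show that $\om_1$, restricted to the (singular) Lagrangian foliation by the sets \eqref{fol}, differs from $\om_0$ by an exact form that is "tangent" to the foliation in a controlled way; more precisely, writing $\om_1=\om_0+d\beta$ with $\beta$ a $1$-form germ vanishing to high order, one shows $\beta$ can be chosen so that the path $\om_t=\om_0+t\,d\beta$, $t\in[0,1]$, consists of symplectic forms for which all $I^j$ still commute. Then I solve the homological/Moser equation $\iota_{X_t}\om_t=-\beta$ for a time-dependent vector field $X_t$, and let $\Phi=\phi^1$ be its time-one flow; the condition that $\beta$ (hence $X_t$) is tangent to the foliation forces $\Phi$ to preserve the level sets, and $\Phi^*\om_1=\om_0$ by the standard Moser computation $\frac{d}{dt}\phi_t^*\om_t=\phi_t^*(\mathcal L_{X_t}\om_t+\dot\om_t)=\phi_t^*(d\iota_{X_t}\om_t+d\beta)=0$. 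The quantitative content — that $X_t$, and hence $\Phi-\id$, lies in $\A_{m,\kappa}$ and gains $\kappa$ derivatives — comes from inverting $\om_t$ as a map $h^m\to h^{-m}$: since $\om_1-\om_0$ is built from $d\Psi(u)^*-\id$, which is $\kappa$-smoothing with a good Cauchy majorant by i), the solution $X_t$ of the Moser equation inherits $\kappa$-smoothing bounds, and the n.a. estimates of Section~\ref{s3} propagate through the (convergent) flow. Finally one checks $\Psi^+=\Phi\circ\Psi$ satisfies i), ii) with the same $\kappa$: ii) holds because $\Phi$ preserves the foliation and each $|u_j|^2$ is a function on a single fibre, so the new integrals are functions of the old and commute; i) holds because $\A_{m,\kappa}$ is closed under composition of germs tangent to the identity (again Section~\ref{s3}), and b) holds by construction.

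The main obstacle I expect is the \emph{infinite-dimensional analytic bookkeeping}: in finite dimensions one freely invokes the analytic-category Poincaré lemma, solves the Moser equation pointwise, and flows a vector field without worrying about uniformity; here every step must be performed inside the classes of n.a. germs and $\A_{m,\kappa}$, with Cauchy-majorant control on the adjoints, and the flow of $X_t$ must be shown to converge on a \emph{fixed} neighbourhood $\OO'$ of the origin with the claimed smoothing. In particular, inverting $\om_t:h^m\to h^{-m}$ as an operator-valued analytic germ — so that $X_t=\om_t^{-1}(-\beta)$ has a Cauchy majorant of the form required by \eqref{1.5} — is the delicate point, and is presumably where the "mild assumptions on $dF(u)^*-\id$" mentioned in the introduction are used. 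The tangency-to-the-foliation condition on $\beta$, needed to make $\Phi$ preserve the level sets through the elliptic singular point, is the other subtle issue: near the elliptic point the "fibres" $\{|u_j|^2=c_j\}$ degenerate, so $\beta$ must actually be of the form $\sum_j a_j(u)\,d(|u_j|^2)$ with $a_j$ analytic germs, and establishing this normal form for $\om_1-\om_0$ from the commutativity hypothesis — the genuine analogue of Eliasson's lemma — is the technical heart of the argument.
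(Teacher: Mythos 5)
Your overall scheme is indeed the paper's: a Moser deformation along $\omega^\tau=\omega_0+\tau(\omega_1-\omega_0)$, with the time-dependent vector field chosen tangent to the foliation so that the time-one flow is symplectic and preserves the sets $\{|v_j|^2=\mathrm{const}_j\}$, and with all quantitative control run inside the classes of n.a.\ germs and $\A_{m,\kappa}$ from Section~\ref{s3}. (One small slip of direction: for the functions $\tfrac12|v_j|^2$ to commute with respect to $\omega_1$ you need $\omega_1=(\Psi^{-1})^*\omega_0$, not $\Psi^*\omega_0$; the paper works with $G=\Psi^{-1}$ throughout.)

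The genuine gap is the step you yourself flag as ``the technical heart'' and then do not carry out: producing a primitive $\beta$ of $\omega_1-\omega_0$ that annihilates the angular vector fields $\chi_j(v)=(0,\dots,iv_j,0,\dots)$. The paper does this in two stages, and the first is missing from your outline entirely. Writing $\omega_1-\omega_0=d\alpha_\triangle$ with $\alpha_\triangle=W(v)dv$, one must solve the infinite system of angular equations $(df,\chi_j)=h_j:=(\alpha_\triangle,\chi_j)$ and take $\beta=\alpha_\triangle-df$. Solvability requires two compatibility conditions: $\chi_j(h_k)=\chi_k(h_j)$, which follows from Lemma~\ref{l2.1} (hypothesis ii) forces $\omega_1(\chi_i,\chi_j)=0$, i.e.\ the fibres $i\Sigma_v$ are isotropic --- in fact Lagrangian, \eqref{2.12} --- for $\omega^\tau$); and $Mh_j=0$ for every $j$, where $M$ is the average over the infinite torus action. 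The latter is \emph{not} automatic. It is arranged by a preliminary Step~1 in which $\Psi$ is first corrected by the flow of $V^\tau=\hat J^\tau(v)(MW)(v)$ --- a flow commuting with all rotations $\Phi_j^\theta$ --- so as to normalize $M\omega_1=\omega_0$. Only after that does Moser's explicit series $f=\sum_l M_1\cdots M_{l-1}L_lh_l$ of Lemma~\ref{l2.2} converge to a solution, and only then does $(\beta,\chi_j)=0$ combined with \eqref{2.12} force the Moser field $V^\tau=J^\tau(W-Y)$ to lie in $i\Sigma_v$, which is what preserves each $|v_j|^2$ and yields assertion a). Without the averaging step, your asserted normal form $\beta=\sum_j a_j\,d(|u_j|^2)$ has no construction behind it (nor is that strong form needed: $(\beta,\chi_j)=0$ plus the Lagrangian property suffices), and the foliation-preservation remains unproved.
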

The theorem is proved in Section~\ref{s4}.
\smallskip

\noindent
{\it Remarks.} 1) The sets, forming the foliation \eqref{fol}, are tori of
 dimension $\#\{const_j>0\}$, which is $\le\infty$.

2) By the item a) of the theorem each $I^j(\Psi(u))$ is a function of the vector
$I^+=\{I^{+j}=\frac12|\Psi^{+j}|^2, j\ge1\}$. In fact, $I^j$ is an analytic
 function of $I^+$ with respect to the norm $\|I^+\|=\sum |I^{+j}|j^{2m}$. E.g.,
 see the proof of Lemma~3.1 in \cite{K2}.

3) The map $\Psi^+$ is obtained from $\Psi$ in a constructive way, independent from $m$.

4) The form $\omega_1=(\Psi^*)^{-1}\omega_0$ equals $\omega_0$ at the origin. So
$\omega_\Delta(u):=\omega_1(u)-\omega_0(u)=O(u)$. Assume that $\omega_\Delta=O(u^k)$
with some $k\ge2$. Then a strightforward analysis of the proof of Theorem~\ref{t2.1} 
shows that  
$\ \|\Psi(u)-\Psi^+(u)\|_{m+\kappa}\le{\rm const}\|u\|_m^{k+1}.
$

5) The theorem above is an infinite-dimensional version of Theorem~C in \cite{E90}
which is the second step in Eliasson's proof of the Vey theorem. At the first step he
{\it proves} that any $n$ commuting integrals $H_1,\dots,H_n$ as in Introduction can
be written in the form ii). In difference with his work we have to {\it assume} that the integrals are of the form ii), where the maps $\Psi_1,\Psi_2,\dots$ have additional
properties, specified in i). Fortunately, we can check i) and ii) for some important
infinite-dimensional systems.

\section{Application to the KdV equation}\label{s2}
\par To apply Theorem \ref{t2.1}
we need a way to construct germs of analytic maps $\Psi: h^m\rightarrow h^m$
which satisfy i) and ii).
Examples of such maps may be obtained from
Lax-integrable Hamiltonian  PDEs
\begin{equation}\label{3.1}
\dot u(t)=i\nabla H(u),\quad u(t)\in h^m.
\end{equation}
(We normalized  original Hamiltonian PDEs and
wrote them as the Hamiltonian systems \eqref{3.1} in the symplectic space as in Section~\ref{s1}).
The Lax operator $L_u$, corresponding
to equation \eqref{3.1},
is such that its spectrum $\sigma(L_u)$ is an integral of motion
for \eqref{3.1}.
Spectral characteristics of $L_u$
 may be used to construct
(real)-analytic germs $\Psi^j:h^m\rightarrow \R^2\simeq\C$
such that the functions
$\frac{1}{2}|\Psi^j|^2$, $j\geq 1$,
are functionally independent integrals of motion.
For some integrable  equations these germs jointly define
a germ of an analytic diffeomorphism
 $u\mapsto \Psi=(\Psi^1,\Psi^2,\dots)$, satisfying i) and ii).
Below we show that this is the case for the KdV equation.
Our construction is general and directly applies to some
other integrable equations
(e.g. to the defocusing Schr\"odinger equation).

 Consider the KdV equation \eqref{3.2}.
This is a Hamiltonian equation in any Sobolev space $H_0^m,\ m\ge1$, given
symplectic structure by the form $\nu$, see Introduction. It is Lax-integrable
 with the Lax operator $L_u=-\p^2/\p x^2-u(x)$. Let $\gamma_1(u), \gamma_2(u), \dots$
 be the sizes of spectral gaps of $L_u$ (e.g., see in \cite{K2, KaP}). It is
 well known that $\gamma_1^2(u), \gamma_2^2(u),\dots$ are commuting analytic
 integrals of motion for \eqref{3.2}, as well as for other equations from the KdV
 hierarchy, see in \cite{KaP}.
 \medskip

In Section~\ref{s5} we show that the spectral theory of $L_u$ may be used to construct an analytic germ $\Psi:h^{1/2}\to h^{1/2}, \Psi=(\Psi^1,\Psi^2,\dots), \Psi^j\in\R^2$, with
 the following properties:

\begin{theorem}\label{t3.1}
For any $m'\ge1/2$, $\ \Psi$ defines a real-analytic germ $\Psi:h^{m'}\to h^{m'}$ such that

i) $d\Psi(0)=\id$ and $(\Psi-\id) \in\frak A_{m',1}$;

ii) for any $j\ge1$ and $v\in h^{m'}$ we have $\frac12|\Psi^j(v)|^2=
\frac{\pi}{2j}\gamma_j(u)^2$, where
$u(x)=\frac1{\sqrt\pi}\Re \sum_{j=1}^\infty\sqrt j\, v_je^{ijx}$.
\end{theorem}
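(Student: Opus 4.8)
The plan is to connect the spectral-theoretic construction of the gap coordinates $\Psi^j$ for $L_u = -\partial^2/\partial x^2 - u$ with the abstract regularity conditions of Definition of $\A_{m',\kappa}$. First I would recall Kappeler's construction: the $j$-th spectral gap $[\lambda_{2j-1},\lambda_{2j}]$ of $L_u$ has endpoints that are analytic functions of $u$, and one introduces the "gap variable" whose modulus squared is proportional to $\gamma_j^2(u)$ while its argument tracks a normalized position on the $j$-th gap (e.g. via the discriminant $\Delta(\lambda,u)$ and the periodic/anti-periodic eigenvalues, or via Dirichlet eigenvalues and their signs). After the weighted Fourier normalization $T$, the natural functional space becomes $h^{1/2}$, which explains the index shift; the claim $\frac12|\Psi^j(v)|^2 = \frac{\pi}{2j}\gamma_j(u)^2$ is essentially the definition of $\Psi^j$ once one checks the normalizing constants, and $d\Psi(0)=\mathrm{id}$ follows because to linear order the gap coordinates reduce to the Fourier coefficients $v_j$ (the linearized operator $L_0$ has its $j$-th gap opened at first order by the $j$-th Fourier mode of $u$, with the classical factor).

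The substantive content is property i), and within it the assertion $(\Psi-\mathrm{id})\in\A_{m',1}$. I would prove this in two layers. First, the $1$-smoothing: one shows that $\Psi^j(v) - v_j$, as a function of the Fourier data, gains one power of $j$ in the sense that $\sum_j j^{2m'+2}|\Psi^j(v)-v_j|^2 < \infty$ for $v\in h^{m'}$, i.e. $\Psi - \mathrm{id}$ maps $h^{m'}\to h^{m'+1}$ analytically. This is the heart of the matter and rests on precise asymptotics of the spectral gaps: $\gamma_j(u)^2$ decays like $|\hat u(j)|^2$ plus corrections, and the correction terms are one derivative smoother. Concretely I would use the trace-type / Counting-function identities and the known asymptotic expansions of periodic eigenvalues (the classical result that $\lambda_{2j} - \lambda_{2j-1} = O(|\hat u(j)| + j^{-1}\|u\|)$ type bounds, refined to show the remainder after subtracting the leading Fourier contribution is $1$-smoother), together with the fact that the full map $u\mapsto(\gamma_j^2)$ is known to be $1$-smoothing from the literature on Birkhoff coordinates for KdV (this is exactly the regularity statement underlying \cite{KaP}, \cite{Kap91}). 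The analyticity and the normal-analyticity (n.a.) bounds — the existence of Cauchy majorants $\underline{\Psi - \mathrm{id}}$ with geometric growth $\|(\underline{\Psi-\mathrm{id}})_N(v)\|_{m'+1}\le C R^N\|v\|_{m'}^N$ — would follow by expanding the relevant analytic functions (the discriminant, the square root defining the gap length, the eigenvalues as roots of an analytic equation) into their power series in $v$ and estimating each homogeneous term; here one uses that each coefficient is built from finitely many Fourier modes and that the multilinear operator norms have the right $j$-dependence to produce the $1$-smoothing uniformly in $N$.

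Second, the adjoint condition \eqref{1.5}: I need that $d\Psi(u)^* - \mathrm{id}$ also maps $h^{m'}$ to $h^{m'+1}$ with a non-negative-matrix-element Cauchy majorant $\Phi(|u|)$ depending analytically on $|u|$. Since $d\Psi - \mathrm{id}$ is itself represented by an analytic family of $1$-smoothing operators with explicit matrix entries (differentiate the series for $\Psi^j$ term by term), its transpose has the transposed matrix, and the majorant of the transpose is the transpose of the majorant; the required analyticity in $|u|$ and the non-negativity are then inherited from the construction in the first step. The point to be careful about is that smoothing is not obviously symmetric under taking adjoints for a general operator, so one genuinely uses the explicit multilinear structure — each $d\Psi^j/\partial v_k$ carries a factor that decays in $\max(j,k)$ in a way symmetric enough to survive transposition; this is where I expect the bookkeeping to be delicate but not conceptually hard.

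The main obstacle, then, is the sharp $1$-smoothing estimate for $\Psi - \mathrm{id}$ together with its analytic (Cauchy-majorant) version, uniformly over all homogeneity degrees $N$. This requires pushing the classical asymptotics of Hill's-equation spectral gaps one order beyond what is usually stated, and organizing them so that the remainder is visibly a normally analytic $1$-smoothing germ in the Fourier variables. Everything else — the index shift through $T$, the identification $\frac12|\Psi^j|^2 = \frac{\pi}{2j}\gamma_j^2$, and $d\Psi(0)=\mathrm{id}$ — is either definitional or standard once the spectral setup of \cite{Kap91,KaP} is in place, and I would cite those works for the basic analyticity and the $u\mapsto(\gamma_j^2)$ mapping properties, adding only the refinement needed to land in $\A_{m',1}$ rather than merely in the class of analytic diffeomorphisms.
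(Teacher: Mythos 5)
Your overall strategy---construct the analytic gap variables spectrally, expand them in Taylor series in the Fourier coefficients, and verify the $1$-smoothing and the adjoint condition \eqref{1.5} term by term---is the same as the paper's, but two of your key steps would not go through as you describe them.

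First, you propose to obtain the $1$-smoothing and the normal-analyticity from ``known asymptotic expansions of periodic eigenvalues'' and from the literature on the map $u\mapsto(\gamma_j^2)$. That literature gives qualitative analyticity and smoothing of the \emph{sum} of the Taylor series, but membership in $\A_{m',1}$ requires the majorant series $\underline{\Psi-\id}$, obtained by replacing \emph{every} Taylor coefficient by its absolute value, to converge and to be $1$-smoothing with geometric bounds uniform in the homogeneity degree $N$. No amount of asymptotics of $\gamma_j$ delivers this; one needs an explicit representation of each $N$-homogeneous kernel. This is exactly what the paper's Lemma~\ref{l4.7} provides, via the choice $z_j(u)=-\sqrt{\pi}\left((L_u-j^2/4)f_j(u),\overline{f_j(u)}\right)$ with $f_j$ built from Kato's transformation operators, followed by iterated resolvent/contour-integral expansions yielding the pointwise kernel bound $|\tilde{\mathcal K}_n^j(i)|\le R^n\mathcal A_n^j(i)$ on the simplex $\sum_l i_l=j$. (Note also that this specific construction is what makes $\Psi^j$ \emph{analytic} with $|\Psi^j|^2\propto\gamma_j^2$ even where the gap closes---$\gamma_j$ itself is not analytic---so the identity in part ii) is not ``definitional'' but rests on the deviator computation of Lemma~\ref{l4.1}.) Your proposal names the difficulty but supplies no mechanism for it; the mechanism is the actual content of the proof.

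Second, your treatment of the adjoint condition asserts that the matrix entries of $d\Psi-\id$ decay ``symmetrically enough to survive transposition'' across all orders. This is false at order $n=2$: the quadratic kernel $\mathcal K_2^j(k,j-k)\propto\frac1{k(k-j)}$ has $\ell^2$-norm $O(|j|^{-1})$, but its transpose $\mathcal B_2^j(i_1,i_2)=\mathcal K_2^{i_1}(j,i_2)$ only has $\ell^2$-norm $O(|j|^{-1})$ as well, not the $O(|j|^{-2})$ that the paper's Lemma~\ref{l4.2}(iii) guarantees for $n\ge3$. This is why the paper splits off $Z_2$ and handles $dZ_2^{\tilde D}(v)^t$ separately by a convolution estimate $h^r\times h^r\to h^r$ valid for $r>1/2$ (Lemma~\ref{l4.5}); that step is precisely where the hypothesis $m'\ge1/2$ enters the theorem. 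As written, your argument would either fail for the quadratic term or would not explain why the result is restricted to $m'\ge1/2$.
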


 Applying Theorems~\ref{t3.1} and~\ref{t2.1} to the KdV equation, written in the variables 
 $v=T(u)\in h^{m'}$,
 we get Theorem~\ref{tB}, stated in the Introduction. Indeed, assertions a) and b) follow from the two theorems and Remark~2 to Theorem~\ref{t2.1} since the hamiltonian of any $n$-th KdV is a function of the lengths of spectral gaps. Assertion c) follows from Remark~3.
\smallskip

 Towards the optimality of Theorems~\ref{tB} and  \ref{t3.1}
we have the following partial results.
\begin{proposition}\label{p1}
Assume that there exists a real-analytic germ
$\Psi: H_0^m\rightarrow h^{m+1/2}$ $\,\forall\,\, m\geq0$,
$d\Psi(0)=T$, such that:

a) for each $m\ge0$, $\Psi-T$ defines a germ of analytic mapping
$H_0^m\rightarrow h^{m+1/2+\kappa}$ with some $\kappa\geq 0$;

b) the hamiltonian $h_{KdV}$  of the KdV equation is a function of the variables
$\frac12\big|\Psi^j(u)\big|^2$, $j\geq 1$, only.

Then $\kappa\leq 3/2$.
\end{proposition}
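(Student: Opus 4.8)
The plan is to extract the obstruction from the explicit structure of the KdV Birkhoff normal form at the quadratic level and compare smoothing exponents. Write $v=\Psi(u)\in h^{m+1/2}$ and note that, by hypothesis~b), $h_{KdV}$ becomes a function of the actions $J_j=\tfrac12|\Psi^j(u)|^2$. Since $d\Psi(0)=T$, the linear part of $\Psi$ sends $u$ to $v^{(1)}$ with $v_j^{(1),\pm}=u_j^\pm j^{-1/2}$, so at second order the actions are $J_j = \tfrac12 j^{-1}((u_j^+)^2+(u_j^-)^2)+O(u^3)$. It is a classical fact (Kappeler--Pöschel, or direct computation from the periodic spectral theory of $L_u$) that the KdV frequencies at the origin are $\omega_j=j^3$, i.e. the quadratic part of $h_{KdV}$ in the variables $v$ is $\sum_j j^3 J_j$ up to the normalisation constant; equivalently $h_{KdV}(u)=\int(-\tfrac18 u_x^2+u^3)\,dx$ has quadratic part $-\tfrac18\int u_x^2 = -\tfrac18\sum_j j^2((u_j^+)^2+(u_j^-)^2)$ — wait, this is only the $O(u^2)$ part; the cubic term $\int u^3$ is the one that must be killed by $\Psi$, and the frequency vector $(j^3)$ is read off from the full normal form after the $\int u^3$ term is removed. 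So the first step is: record the quadratic and cubic parts of $h_{KdV}$ in the $u$-coordinates and the requirement that, after substituting $u=\Psi^{-1}(v)$, the result depends only on $|v^j|^2$; the cubic term $\int u^3$ forces $\Psi - T$ to have a nontrivial homogeneous-quadratic part $\psi_2$ determined by a cohomological equation $\{ \,\sum j^3 J_j\,,\,\chi\} = -\int u^3 + (\text{resonant terms})$, where $\Psi$ is the time-one flow (to leading order) of the Hamiltonian $\chi$.

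The second step is to solve that cohomological equation and read off the smoothing. Expanding $\int u^3$ in Fourier modes gives a sum over triples $s_1\pm s_2\pm s_3=0$ of monomials $u_{s_1}^\pm u_{s_2}^\pm u_{s_3}^\pm$, each with a bounded coefficient. Dividing by the small denominator $s_1^3 \pm s_2^3 \pm s_3^3$ (which on the zero-meanvalue lattice with $s_i\ge1$ and $s_1=s_2+s_3$ equals $3 s_2 s_3 (s_2+s_3) \ne 0$, so there are genuinely no resonances among the cubic terms) produces the generating Hamiltonian $\chi$, and hence $\psi_2 = $ (the leading part of $\Psi-T$) as the Hamiltonian vector field of $\chi$. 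The key quantitative point: the denominator $s_1^3-s_2^3-s_3^3 = 3 s_2 s_3 s_1$ grows like the product of the three frequencies, which gives $\chi$ three extra derivatives relative to $\int u^3$; but $\Psi-T \sim X_\chi$ is one derivative less smoothing than $\chi$ itself, and one must also account for the $j^{-1/2}$ twist in $T$. Tracking the powers of $j$ carefully through $v_j = u_j j^{-1/2}$, $\psi_2^j = (\nabla\chi)_j$ scaled appropriately, one finds that $\psi_2$ gains exactly $3/2$ derivatives over the identity and no more — i.e. $\Psi-T$ maps $H_0^m\to h^{m+1/2+3/2}=h^{m+2}$ at this order but the coefficient in the worst direction does not decay faster, so $\kappa\le 3/2$.

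Concretely, to make the upper bound rigorous I would exhibit a single "bad" family of inputs: take $u$ with only modes $s_2=1$ and $s_3=n$ (so $s_1=n+1$) excited, of unit size in $H_0^m$ after the $n^{-m}$ weighting, and show that the $(n+1)$-st component of $\Psi(u)-T(u)$ is bounded below by $\const\cdot n^{-3/2}\cdot(\text{the } H^m\text{-size factors})$; plugging into the definition of $(m+1/2+\kappa)$-smoothing forces $\kappa\le 3/2$. The main obstacle is pinning down that the quadratic part $\psi_2$ of $\Psi$ is genuinely forced (not just one admissible choice) — here one uses that the \emph{action map} $u\mapsto (J_j)$ is unique (footnote in the Introduction) even though $\Psi$ is not, so the ambiguity in $\Psi$ is only by a $u$-dependent rotation in each $\R^2$ factor, which does not change $|\Psi^j|$ and hence cannot improve the smoothing of $\Psi-T$ in the radial direction that carries the obstruction. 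A secondary technical point is justifying that higher-order terms in $\Psi$ and in the normal-form transformation cannot cancel the quadratic obstruction — this follows because they contribute at strictly higher homogeneity in $u$ and the chosen test inputs can be taken arbitrarily small.
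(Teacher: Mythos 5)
Your starting point --- that hypothesis b) forces the cubic term $\int u^3\,dx$ to be generated by the quadratic part of the transformation, and that the assumed smoothing of that quadratic part then limits $\kappa$ --- is also the engine of the paper's proof. But your execution has two genuine gaps. First, you phrase everything through a generating Hamiltonian $\chi$ and a cohomological equation $\{h_2,\chi\}=-\int u^3\,dx$, which presupposes that $\Psi$ is (to leading order) the flow of a Hamiltonian, i.e.\ symplectic; Proposition~\ref{p1} makes no symplecticity assumption (that case is Proposition~\ref{p2}, which yields a different bound). Second, and more seriously, condition b) does not determine $\psi_2$ componentwise: at cubic order it imposes only the single scalar identity $\sum_j a_j\langle T^ju,\psi_2^j(u)\rangle=\int u^3\,dx$ (equivalently, in the paper's variables, $K_3\equiv0$, i.e.\ $\int u^3\,dx=\frac14\int u_x\p_xG_2(u)\,dx$). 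The coefficient of each resonant monomial $\bar u_{s_2+s_3}u_{s_2}u_{s_3}$ is thereby shared among the three components $\psi_2^{s_2+s_3},\psi_2^{s_2},\psi_2^{s_3}$, with additional freedom in the tangential ($u$-dependent rotation) directions; this freedom sits at the same quadratic homogeneity, so it cannot be dismissed by scaling, and ``uniqueness of the action map'' does not remove it. Consequently, lower-bounding the single component $(\Psi(u)-T(u))^{n+1}$ on a two-mode input proves nothing about a general admissible $\Psi$: the obstruction can be redistributed into the low mode $s_2=1$ (whose contribution to the $h^{m+1/2+\kappa}$-norm carries no weight) or into tangential terms. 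Your quantitative claim is also off: the canonical quadratic part \eqref{5.20} has coefficient of size $j^{-1/2}|k|^{-1/2}|j-k|^{-1/2}$, which on your family ($k=1$, $j=n+1$) is of order $n^{-1}$, not $n^{-3/2}$ --- that particular $\Psi$ is only $1$-smoothing there, consistent with Theorem~\ref{t3.1} and with the authors' conjecture that $1$-smoothing is optimal; $3/2$ is merely the best upper bound provable for an \emph{arbitrary} admissible $\Psi$. Note finally that for two well-separated Fourier modes $\int u^3\,dx=0$, so such inputs cannot even see the cubic obstruction through the scalar identity.

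The paper's route avoids all of this. It sets $G=\Psi^{-1}\circ T$, observes that $K=h_{KdV}\circ G$ is a function of the $|u_j|^2$ and hence even, so $K_3\equiv0$ gives the identity above; it then estimates $\big|\int u^3\,dx\big|\le C\|u\|_{H_0^{2-\kappa}}\|G_2(u)\|_{H_0^{\kappa}}\le C\|u\|_{H_0^{2-\kappa}}|u|_{L_2}^2$ using only the assumed smoothing of $G_2$, interpolates, and feeds in a concentrated triangular bump $u_\e$ with $\int u_\e^3\sim\e^{-2}$, $\int u_\e^2\sim\e^{-1}$, $\int u_{\e,x}^2\sim\e^{-3}$. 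The concentration profile, not a lattice point, is what makes the exponents balance at $\kappa=3/2$, and the argument applies to every admissible $\Psi$ simultaneously. To salvage your approach you would have to work with the scalar identity (a duality bound on the cubic form) rather than with individual components of $\psi_2$ --- at which point you would have rederived the paper's proof.
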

\begin{proof}
We may assume that $\kappa\ge1$.
Denote by $G$ the germ $G=\Psi^{-1}\circ T: H_0^m\rightarrow H_0^m$.
We have  $dG(0)=\id$ and
$G-\id: H_0^m\rightarrow H_0^{m+\kappa}$. So $G(u)=u+\sum_{N=2}^\infty G_N(u)$,
where
\begin{equation}\label{a1}
\|G_N(u)\|_{H^{m+\kappa}}\leq C^N \|u\|^N_{H^m} \quad \forall\,\, N\geq 2,
\end{equation}
for each $m\ge0$, with some $C=C(m)$.
 Consider the functional
$K=h_{KdV}\circ G$.  It  defines a germ of analytic mapping
$H_0^1\rightarrow \R$ and
 can be written as an absolutely and uniformly
convergent series
$\
K(u)=\sum_{n=2}^\infty K_n(u),
$
where $K_n(\cdot)$ is an $n$-homogeneous functional
on $H_0^1$. Then
$$ K_2(u)=-\frac{1}{8}\int u_x^2\,dx\,, \quad
K_3(u)=\int\big(-\frac14u_x\p_xG_2(u)+u^3\big)\,dx\,.
$$
 It follows from assumption b) that
$K_{2l+1}$, $l=1,2,\dots,$ vanish identically.
In particular, $K_3(u)\equiv 0$.  Together with \eqref{a1} this leads to the
relations
\begin{equation*}
\bigg|\int u^3dx\bigg| =\frac14\bigg|\int u_x\p_xG_2(u)\,dx\bigg|
\leq C\|u\|_{H_0^{2-\kappa}} \|G_2(u)\|_{H_0^{\kappa}}
\leq C\|u\|_{H_0^{2-\kappa}} |u|^2_{L_2},
\end{equation*}
valid for each $u\in H_0^1$. If $\kappa\ge2$ we have an obvious contradiction. It remains to consider
the case when $1\le\kappa<2$. Now $\|u\|_{H_0^{2-\kappa}}
 \le \|u\|_{H_0^{1}} ^{2-\kappa}|u|_{L_2}^{\kappa-1}$
 and the inequality above implies that
\begin{equation}\label{a2}
\bigg|\int u^3dx\bigg|
\leq C\|u\|_{H_0^{1}} ^{2-\kappa}   |u|^{1+\kappa}_{L_2}
\qquad \forall \,
u\in H_0^1.
\end{equation}
For $0<\e\le1$ we define $v_\e(x)$ as the continuous piece-wise linear $2\pi$-periodic function, equal
$\e^{-2}\max(\e-|x|,0)$ for $|x|\le4$. Then $u_\e:=v_\e-(2\pi)^{-1}  \in H_0^1$  and
$$
\int u_\e^3\,dx\sim \e^{-2},\quad \int u_\e^2\,dx\sim \e^{-1},\quad
\int \left(\frac{\p}{\p x}\, u_\e\right)^2dx\sim \e^{-3}.
$$
Substituting $u_\e$ in \eqref{a2} we get that
$\e^{-2}\le\const\, \e^{-\frac32(2-\kappa)}\e^{-\frac12(1+\kappa)}
$
for each $\e$. So  $\kappa\le\frac32$, as stated.
\end{proof}

If a germ $\Psi: H_0^m\rightarrow h^{m+1/2}$ is defined for a single value of $m$ we have
a weaker result:

\begin{proposition}\label{p2} Let for some $m'\ge1$ there exists a germ of
  real-analytic symplectomorphism
$\Psi: (H_0^{m'}, \nu)
\rightarrow (h^{m'+1/2},\omega_0)
$,
$d\Psi(0)=T$, satisfying a) and b) in Proposition~\ref{a1} with $m=m'$. Then
$\kappa\le2$.
\end{proposition}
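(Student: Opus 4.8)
The plan is to run the same argument as in Proposition~\ref{p1}, but now exploiting the extra hypothesis that $\Psi$ is a \emph{symplectomorphism} to get one more degree of cancellation. As before, set $G=\Psi^{-1}\circ T:H_0^{m'}\to H_0^{m'}$, so $dG(0)=\id$, $G-\id:H_0^{m'}\to H_0^{m'+\kappa}$, and $G(u)=u+G_2(u)+G_3(u)+\cdots$ with the homogeneous bounds \eqref{a1}. The key new input is that $G$ is itself a symplectomorphism: since $\Psi^*\omega_0=\nu$ and $T^*\omega_0=\nu$, we get $G^*\nu=\nu$, i.e. $G$ preserves the form $\nu=(-\p/\p x)^{-1}du\wedge du$. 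Writing $G=\id+g$ with $g=O(u^2)$ and linearizing the condition $G^*\nu=\nu$ at each homogeneity degree, the lowest-order identity reads $\nu(dg_2(u)\xi,\eta)+\nu(\xi,dg_2(u)\eta)=0$ for all $\xi,\eta$; equivalently, the bilinear map $u\mapsto G_2(u)$ is a \emph{Hamiltonian} quadratic vector field, i.e. there is a cubic functional $\Phi_3$ on $H_0^{m'}$ with $G_2(u)=\p_x\nabla\Phi_3(u)$ (the symplectic gradient with respect to $\nu$), and moreover $G_2$ is divergence-free in the appropriate sense. This is exactly the statement that the quadratic part of $G$ is the time-one flow of a cubic Hamiltonian.

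Next I would recompute $K=h_{KdV}\circ G=\sum_{n\ge2}K_n$ as in Proposition~\ref{p1}. We again have $K_2(u)=-\tfrac18\int u_x^2\,dx$ and $K_3(u)=\int(-\tfrac14 u_x\,\p_xG_2(u)+u^3)\,dx$, and assumption b) still forces $K_3\equiv0$. So far this is identical; the gain comes from estimating $\int u_x\p_xG_2(u)\,dx$ better using the Hamiltonian structure of $G_2$. Writing $G_2(u)=\p_x\nabla\Phi_3(u)$, we have
\begin{equation*}
\int u_x\,\p_xG_2(u)\,dx=\int u_x\,\p_x^2\nabla\Phi_3(u)\,dx=-\int u_{xx}\,\p_x\nabla\Phi_3(u)\,dx,
\end{equation*}
and since $K_2(u)=-\tfrac18\int u_x^2 = \tfrac18\int u\,u_{xx}\,dx$, the quantity $\int u_x\p_xG_2(u)\,dx$ is (up to constants) the directional derivative of $K_2$ along the Hamiltonian vector field generated by $\Phi_3$, hence equal to $\{K_2,\Phi_3\}$ up to a factor. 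One shows $\{K_2,\Phi_3\}$ admits a gain of one derivative over the naive estimate: the naive bound gave $\|u\|_{H^{2-\kappa}}|u|_{L_2}^2$ in Proposition~\ref{p1}, but writing the Poisson bracket and integrating by parts once more (moving a derivative off the highest-order factor $u_{xx}$ onto the two lower-order factors, which is legitimate because $\nabla\Phi_3$ is quadratic and the total expression is a trilinear form with a $\p_x$ built in) improves this to $\|u\|_{H^{3-\kappa}}|u|_{L_2}^2$ — equivalently $|\int u^3| \le C\|u\|_{H^{3-\kappa}}|u|_{L_2}^2$. Then, for $2\le\kappa<3$, interpolating $\|u\|_{H^{3-\kappa}_0}\le\|u\|_{H^1_0}^{3-\kappa}|u|_{L_2}^{\kappa-2}$ yields $|\int u^3|\le C\|u\|_{H^1_0}^{3-\kappa}|u|_{L_2}^{\kappa}$, and substituting the same family $u_\e$ (with $\int u_\e^3\sim\e^{-2}$, $\int u_\e^2\sim\e^{-1}$, $\int(\p_xu_\e)^2\sim\e^{-3}$) gives $\e^{-2}\le\const\,\e^{-\frac32(3-\kappa)}\e^{-\frac12\kappa}$, i.e. $-2\ge-\frac92+\kappa$, so $\kappa\le\frac52$; a slightly sharper bookkeeping of the cubic term, or using that the \emph{whole} of $K_3$ vanishes rather than just bounding one piece, should push this down to $\kappa\le2$.

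The main obstacle is making precise the claimed extra derivative gain in $\int u_x\,\p_x G_2(u)\,dx$. This rests entirely on the symplectic identity for $G$: one must verify that $G^*\nu=\nu$ together with $dG(0)=\id$ really does force $G_2$ to be a Hamiltonian vector field for $\nu$ (with the specific $\p_x$ weight), and then that the resulting trilinear form $\int u_x\,\p_x^2\nabla\Phi_3(u)\,dx$ can be symmetrized/integrated by parts to remove two derivatives from a single factor and distribute them — a computation that is clean for KdV because $\nabla\Phi_3$ is a concrete local quadratic differential polynomial, but which requires care to keep the estimate uniform over $u\in H_0^{m'}$ in a neighborhood of the origin and to handle the fact that $\Phi_3$ a priori may contain nonlocal pieces coming from $\p_x^{-1}$. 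Once the bound $|\int u^3|\le C\|u\|_{H^{3-\kappa}_0}|u|_{L_2}^2$ is established, the rest is the same test-function computation as in Proposition~\ref{p1}.
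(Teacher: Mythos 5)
Your strategy---rerunning the test-function/scaling argument of Proposition~\ref{p1} with an improved trilinear estimate---has three genuine gaps, and the first is fatal to the whole approach. The scaling argument needs the bound $\|G_2(u)\|_{H_0^{\kappa}}\le C|u|_{L_2}^2$, i.e.\ the smoothing property of $G_2$ at the level $m=0$; but Proposition~\ref{p2} grants hypothesis a) only for the single value $m=m'\ge1$, so all you actually have is $\|G_2(u)\|_{H_0^{m'+\kappa}}\le C\|u\|^2_{H_0^{m'}}$. With only that, the trilinear estimate degenerates: for instance with $m'=1$ one gets $\bigl|\int u^3\,dx\bigr|\le C\|u\|_{H^{1-\kappa}}\|u\|^2_{H^1}$, which the family $u_\e$ satisfies for every $\kappa\ge1$ (the right-hand side is of order $\e^{-7/2}$ against $\e^{-2}$ on the left), so no contradiction is produced for any $\kappa$. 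Second, the claimed extra derivative gain in $\int u_x\,\p_xG_2(u)\,dx$ is asserted rather than proved: the ``integrate by parts to redistribute derivatives among the factors'' step presupposes that $\nabla\Phi_3$ is a local differential polynomial, whereas $G_2$ is an abstract bilinear map known only through a smoothing bound---you flag this yourself as the main obstacle, and it is not resolved. Third, even granting both points, your computation yields only $\kappa\le5/2$, and the final reduction to $\kappa\le2$ is left to ``sharper bookkeeping''.

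The paper avoids all of this by working with $\nabla K_3$ instead of with integral estimates against test functions. From $K_3\equiv0$ one gets
$0=\nabla K_3(u)=\tfrac14\Delta G_2(u)+\tfrac14\,dG_2(u)^*\Delta u+3u^2$.
The first term lies in $H^{m'+\kappa-2}$. The symplectic identity $dG(u)^*JdG(u)\equiv J$ with $J=-(\p/\p x)^{-1}$ shows that $dG(u)^*$ preserves $H_0^{m'+1}$; combined with its boundedness on $H_0^{-m'}$ and interpolation, this puts the second term in $H^{m'+\kappa-2}$ as well. If $\kappa>2$ these two terms are strictly smoother than $3u^2$, which for generic $u\in H_0^{m'}$ lies in no better space than $H_0^{m'}$, so the sum cannot vanish identically. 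This is where the symplectic hypothesis genuinely enters, and the argument requires no estimates below the regularity level $m'$---precisely the information that Proposition~\ref{p2}, unlike Proposition~\ref{p1}, does not provide.
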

\begin{proof}
Assume that $\kappa>2$. Keeping the notations above we still have $K_3=0$. So
\begin{equation*}
0=\nabla K_3(u)=\frac14\Delta G_2(u)+\frac14dG_2(u)^*\Delta u+3u^2\,.
\end{equation*}
The first term in the r.h.s. clearly belongs to  $H^{m'+\kappa-2}$.
The germ $G$ is a symplectomorphism of $(H_0^{m'},\nu)$.  Therefore
$dG(u)^*JdG(u)\equiv J$, $J=-(\p/\p x)^{-1}$, and $dG(u)^*$ maps $H_0^{m'+1}$ to itself. Since
$dG(u)^*$ also maps to itself $H_0^{-m'}$, then by interpolation $dG(u)^*:H_0^s\to H_0^s$ for each $s$ in
$[-m',m'+1]$. So the second
  term in the r.h.s. also belong to $H^{m'+\kappa-2}$. Since $\kappa-2>0$, then the sum of the first two terms
  cannot cancel identically the third, belonging to $H_0^{m'}$. Contradiction.
\end{proof}

\section{Properties of normally analytic germs}\label{s3}

\begin{lemma}
\label{l1.1}
If $F: h^{n_1}\rightarrow  h^{n_2}$
and $G: h^{n_2}\rightarrow  h^{n_3}$
are n.a. germs, then the composition
$G\circ F: h^{n_1}\rightarrow  h^{n_3}$
also is n.a.
\end{lemma}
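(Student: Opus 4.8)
The plan is to reduce the statement to a Cauchy-majorant estimate on the composition of the associated majorant maps. Let me first recall what needs to be shown. We are given n.a. germs $F: h^{n_1}\to h^{n_2}$ and $G: h^{n_2}\to h^{n_3}$. Writing both in their power series $F^j(u)=\sum_{|\a|+|\b|=N} A^j_{\a\b}u^\a\bar u^\b$ and similarly for $G$, the composition $H=G\circ F$ is again an analytic germ $h^{n_1}\to h^{n_3}$ (formal substitution of power series converges near the origin since $F(0)=0$ and both are continuous analytic maps). The content is that $H$ is \emph{normally} analytic, i.e. its majorant $\underline H$ defines an analytic germ $h_R^{n_1}\to h_R^{n_3}$.

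First I would establish the key monotonicity/domination fact: for formal power series, the majorant of a composition is dominated (coefficientwise) by the composition of the majorants. Concretely, if we substitute $F$ into $G$ and collect the coefficient of a fixed monomial $u^\a\bar u^\b$ in $H^k$, it is a finite sum of products of coefficients $B^k_{\cdot\cdot}$ of $G$ and coefficients $A^{j}_{\cdot\cdot}$ of $F$; passing to absolute values and using the triangle inequality shows $|H^k_{\text{coeff of }u^\a\bar u^\b}| \le$ (the corresponding coefficient of $\underline G \circ \underline F$ evaluated formally). Hence componentwise $\underline H(v) \le \underline G(\underline F(v))$ as formal series with nonnegative coefficients, and in particular $\|\underline H_N(v)\|_{n_3} \le \|(\underline G\circ\underline F)_N(v)\|_{n_3}$ for each homogeneous degree $N$ and each $v\in h_R^{n_1}$ with nonnegative entries. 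Thus it suffices to prove that $\underline G\circ\underline F$ is an analytic germ $h_R^{n_1}\to h_R^{n_3}$.

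Next I would verify that composition of ordinary analytic germs between the real Hilbert spaces $h_R^{m}$ is again analytic. This is essentially the standard fact that composition of analytic maps between Banach spaces is analytic: since $\underline F(0)=0$ and $\underline F:h_R^{n_1}\to h_R^{n_2}$ is analytic on some ball $\OO_{\delta_1}(h_R^{n_1})$ with $\|\underline F_N(v)\|_{n_2}\le C_1 R_1^N\|v\|_{n_1}^N$, and $\underline G:h_R^{n_2}\to h_R^{n_3}$ is analytic on $\OO_{\delta_2}(h_R^{n_2})$, one chooses $\delta$ small enough that $\|\underline F(v)\|_{n_2}<\delta_2$ for $\|v\|_{n_1}<\delta$ (possible since $\underline F$ is continuous and vanishes at $0$), and then the substituted series for $\underline G\circ\underline F$ converges absolutely and uniformly on that ball, with homogeneous terms satisfying a bound $\|(\underline G\circ\underline F)_N(v)\|_{n_3}\le C R^N\|v\|_{n_1}^N$ obtained by the usual majorization: plug the geometric majorant $\sum C_1 R_1^N t^N = C_1 R_1 t/(1-R_1 t)$ of $\underline F$ into the geometric majorant of $\underline G$ and expand. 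Because all the coefficients in sight are nonnegative, there is no cancellation to worry about and the estimates are clean.

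The main obstacle — really the only subtle point — is bookkeeping the combinatorics so that the coefficientwise domination $\underline H \le \underline G\circ\underline F$ is justified rigorously, in particular making sure that when one substitutes $F$ (with its $u$ and $\bar u$ monomials) into $G$ (which also has $u$- and $\bar u$-type arguments), the majorant $\underline G$ is applied to $\underline F(|u|)$ with the right placement of conjugation bars; since $\underline F$ has nonnegative real coefficients, $\underline F(|u|)$ is a nonnegative real sequence and the distinction between holomorphic and antiholomorphic arguments of $\underline G$ collapses, so one simply gets $|H(u)| \le \underline G(\underline F(|u|)) = \underline H(|u|)$ — but writing this out carefully for the homogeneous pieces is where the work lies. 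Everything else is the routine Banach-space composition-of-analytic-maps argument applied in the scale $h_R^\bullet$, together with the observation that the required estimate $\|\underline H_N(v)\|_{n_3}\le CR^N\|v\|_{n_1}^N$ follows from the corresponding estimate for $\underline G\circ\underline F$ by the domination established in the first step.
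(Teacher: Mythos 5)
Your proposal is correct and follows essentially the same route as the paper: substitute the series for $F$ into that for $G$, use the triangle inequality on the collected coefficients to obtain the componentwise domination $\underline{G\circ F}\le \underline G\circ\underline F$, and then invoke analyticity of the composition of the (nonnegative-coefficient) majorant germs $h_R^{n_1}\to h_R^{n_3}$. The only difference is that you spell out the standard Banach-space composition estimate for $\underline G\circ\underline F$, which the paper simply asserts.
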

\begin{proof}
Denote $F(u)=v$ and $G(v)=w$. Then
$$w_j=G^j(v)=\sum A(G)^j_{\alpha\beta}v^\a \bar v^\b,\qquad
v_l=\sum A(F)^l_{\alpha\beta}u^\a \bar u^\b.$$
Substituting series in series, collecting similar terms and replacing
$u_j$ and $\bar u_j$ by $|u_j|$ we get
$\underline{G\circ F}(|u|)$.
\par Next consider ${\underline G}\circ {\underline F}(|u|)$.
This series is obtained by the same procedure as
$\underline{G\circ F}(|u|)$, but
instead of calculating the modulus of an algebraical sum
of similar terms we take the sum of their moduli.
As $|a+b|\leq |a|+|b|$, we get
$\underline{G\circ F} \leq {\underline G}\circ {\underline F}$.
Since both series
have non-negative coefficients and ${\underline G}\circ {\underline F}$
defines an analytic germ
$ h^{n_1}_R\rightarrow  h^{n_3}_R$,
the assertion follows.
\end{proof}

\begin{lemma}\label{l1.2}
If $F: h^{m}\rightarrow  h^{m}$ is a n.a. germ such that
$F_1= dF(0)=\,$id,
then the germ $G=F^{-1}$ exists and is n.a.
\end{lemma}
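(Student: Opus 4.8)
The plan is to prove Lemma~\ref{l1.2} by a direct formal inversion of the power series, constructing $G = F^{-1}$ term by term, and then checking that the resulting majorant series converges by comparison with an explicit (finite-dimensional) model, using the earlier closure properties. Write $F(u) = u + \widetilde F(u)$ with $\widetilde F = O(u^2)$ a n.a.\ germ. The inverse, if it exists as a formal series, must satisfy $G(v) = v - \widetilde F(G(v))$, which determines the homogeneous components $G_N$ recursively: $G_1 = \mathrm{id}$ and, for $N \ge 2$, $G_N$ is a polynomial (with universal combinatorial coefficients) in the components $A(\widetilde F)^j_{\alpha\beta}$ and the previously constructed $G_2, \dots, G_{N-1}$. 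So the formal germ $G$ exists and is unique; the real content is to show it is n.a., i.e.\ that $\underline G$ defines an analytic germ $h^m_R \to h^m_R$.

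The key step is to set up the right majorization. First I would observe that replacing every coefficient by its modulus in the recursion for $G$ only increases things coefficient-wise, so $\underline G \le H$, where $H$ is the formal solution of $H(v) = v + \underline{\widetilde F}(H(v))$ (all coefficients now non-negative). Thus it suffices to show that this fixed-point equation, posed in the space $h^m_R$, has an analytic germ solution $H$ with $dH(0) = \mathrm{id}$. Since $\underline{\widetilde F}$ is an analytic germ $h^m_R \to h^m_R$ vanishing to second order, the map $v \mapsto v - H \mapsto \text{(something)}$ --- more precisely the map $\Phi(v, w) = w - v - \underline{\widetilde F}(w)$ --- is analytic near $(0,0)$ with $\Phi(0,0) = 0$ and $\partial_w \Phi(0,0) = \mathrm{id}$. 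By the analytic implicit function theorem in the Banach space $h^m_R$ there is an analytic germ $w = H(v)$ solving $\Phi(v, H(v)) = 0$, with $dH(0) = \mathrm{id}$. This $H$ has a convergent power series with non-negative coefficients (the coefficients are forced by the same recursion, which propagates non-negativity), and it dominates $\underline G$ coefficient-wise; hence $\underline G$ converges on a neighbourhood of the origin in $h^m_R$ and defines an analytic germ there. That gives $G$ n.a.

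It remains to confirm that $G$ is genuinely the inverse of $F$ as analytic maps (not merely as formal series). This follows from the ordinary analytic inverse function theorem on $h^m$: since $dF(0) = \mathrm{id}$ is invertible and $F$ is analytic near $0$, $F$ has an analytic local inverse, whose Taylor series at $0$ must coincide with the formal series $G$ constructed above by uniqueness of the formal inverse; and we have just shown that this series converges in the normally analytic sense. Composing with Lemma~\ref{l1.1} (closure of n.a.\ germs under composition) is not even needed here, though it does reconcile the two viewpoints: $G \circ F$ and $F \circ G$ are n.a.\ germs equal to the identity formally, hence equal to $\mathrm{id}$.

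I expect the main obstacle to be a clean justification that the combinatorial recursion for the majorant components $H_N$ really reproduces exactly the Taylor coefficients of the implicit-function solution --- i.e.\ bookkeeping that "solve the fixed-point equation formally" and "apply the analytic IFT" yield the same series, so that one can transfer analyticity (convergence) from the IFT solution back to the majorant $\underline G$. This is routine but must be stated carefully, since the whole point of the normally-analytic formalism is to control the Cauchy majorants rather than the maps themselves; the rest (existence of the formal inverse, coefficient-wise domination, the analytic IFT in $h^m_R$) is standard.
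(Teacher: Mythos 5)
Your proposal is correct and follows essentially the same route as the paper: formal term-by-term inversion of the series, coefficient-wise domination of $\underline G$ by the majorant obtained from the sign-flipped recursion, and convergence of that majorant in $h^m_R$. The only difference is cosmetic --- you justify the convergence of the majorant series by identifying it as the analytic implicit-function solution of $H(v)=v+\underline{\widetilde F}(H(v))$, whereas the paper simply asserts that the recursive formulas for $\check G$ define an analytic germ, so your argument makes explicit a step the paper leaves implicit.
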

\begin{proof}
Write
$\ F(u)=u+F_2(u)+F_3(u)+\dots$.
We are looking for $G(v)$ in the form
$G(v)=v-G_2(v)-G_3(v)-\dots\,.$
Then
\begin{equation*}\begin{split}
F(G(v))&=v-G_2(v)-G_3(v)-\dots \\
&+F_2(v-G_2(v)-\dots, v-G_2(v)-\dots)\\
&+
F_3(v-G_2(v)-\dots,v-G_2(v)-\dots, v-G_2(v)-\dots )+\dots.
\end{split}
\end{equation*}
Here and below we freely identify  $n$-homogeneous maps
with the corresponding $n$-linear symmetric forms.
 Since $F(G(v))=v$, we have the recursive relations
\begin{align*}
G_2(v)&=F_2(v,v),\\
G_3(v)&=F_3(v, v,v)-2F_2(v, G_2(v)),\\
G_4(v)&=F_4(v,v,v,v)-3F_3(v,v,G_2(v))+F_2(G_2(v), G_2(v))
-2F_2(v, G_3(v)),\\
&\dots.\end{align*}
For the same reasons as in the proof of Lemma~\ref{l1.1}
we have
\begin{align*}
\underline G_2(|v|)&\leq \underline F_2(|v|,|v|)=:\check G_2(|v|),\\
\underline G_3(|v|)&\leq
\underline  F_3(|v|, |v|,|v|)+2\underline F_2(|v|, \check G_2(|v|))
=:\check G_3(|v|),\\
&\dots
\end{align*}
These recursive formulas define a germ of an analytic map
 $ h^{m}_R\rightarrow  h^{m}_R$,
$|v|\mapsto \check G(|v|)=|v|+\check G_2(|v|)+\dots$.
Since $\underline G \leq \check G$,  then the assertion follows.
\end{proof}
For a n.a. germ $ F: h^m\rightarrow h^n$
consider its differential, which we regard as a germ
\begin{equation}\label{1.3}
dF(u)v: h^m\times h^m\rightarrow h^n.
\end{equation}

\begin{lemma}\label{l1.3}
Germ \eqref{1.3} is n.a. and
$\ \underline{dF(|u|)|v|}\leq d\underline F(|u|)|v|$.
\end{lemma}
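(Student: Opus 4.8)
The plan is to compare coefficient-by-coefficient the two formal power series $\underline{dF(|u|)|v|}$ and $d\underline{F}(|u|)|v|$, using the elementary inequality $|a+b|\le|a|+|b|$ exactly as in the proofs of Lemmas~\ref{l1.1} and~\ref{l1.2}. Writing $F^j(u)=\sum_{\a,\b}A^j_{\a\b}u^\a\bar u^\b$, the differential has components
\[
(dF(u)v)^j=\sum_{\a,\b}A^j_{\a\b}\Big(\sum_k \a_k\, u^{\a-e_k}\bar u^\b\,v_k+\sum_k \b_k\, u^\a\bar u^{\b-e_k}\,\bar v_k\Big),
\]
where $e_k$ is the $k$-th coordinate vector. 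First I would observe that this is again an absolutely convergent series in the variables $(u,\bar u,v,\bar v)$, linear in $(v,\bar v)$, so it is itself an analytic germ $h^m\times h^m\to h^n$ and the notion of its Cauchy majorant makes sense.

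Next I would compute $\underline{dF(|u|)|v|}$: by definition this is obtained from the series for $(dF(u)v)^j$ by replacing each $u_k,\bar u_k$ by $|u_k|$ and each $v_k,\bar v_k$ by $|v_k|$ \emph{after} taking absolute values of the algebraic sums of like terms that arise from combining monomials. On the other hand $d\underline{F}(|u|)|v|$ is, by the same computation applied to $\underline{F}^j(v')=\sum_{\a,\b}|A^j_{\a\b}|(v')^{\a+\b}$, the series
\[
\big(d\underline F(|u|)|v|\big)^j=\sum_{\a,\b}|A^j_{\a\b}|\sum_k(\a_k+\b_k)\,|u|^{\a+\b-e_k}\,|v_k|,
\]
which is exactly the series one gets by taking the \emph{sum of the moduli} of the like terms rather than the modulus of their sum. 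Hence, term by term, $\underline{dF(|u|)|v|}\le d\underline{F}(|u|)|v|$ with all coefficients nonnegative.

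Finally I would invoke the hypothesis that $F$ is n.a., which by definition means $\underline{F}$ defines an analytic germ $h^m_R\to h^n_R$; since differentiation of an analytic germ yields an analytic germ (the bound $\|\underline F_N(v)\|_n\le CR^N\|v\|_m^N$ gives, by the standard Cauchy-estimate argument, $\|d\underline F_N(v)w\|_n\le N C R^N\|v\|_m^{N-1}\|w\|_m$, and $\sum_N N C R^N t^{N-1}$ has positive radius of convergence), the majorant series $d\underline F(|u|)|v|$ converges and defines an analytic germ $h^m_R\times h^m_R\to h^n_R$. Being dominated by it with nonnegative coefficients, $\underline{dF(|u|)|v|}$ also converges and defines such a germ, so $dF$ is n.a. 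I do not expect any serious obstacle here; the only point requiring a little care is the bookkeeping showing that ``take moduli of individual monomials, then differentiate'' and ``differentiate, then take moduli of the combined monomials'' are related by exactly one application of the triangle inequality — this is the analogue of the $|a+b|\le|a|+|b|$ step in Lemma~\ref{l1.1}, and it is where the inequality (rather than equality) in the statement comes from.
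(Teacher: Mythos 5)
Your proposal is correct and follows essentially the same route as the paper: write out the series for $dF^j(u)v$ term by term, observe that $d\underline F(|u|)|v|$ is the sum of the moduli of the individual monomials while $\underline{dF(|u|)|v|}$ takes the modulus of their algebraic sums, and conclude by the triangle inequality that the former majorizes the latter. The paper leaves the final convergence step (that $d\underline F$ is itself an analytic germ) implicit, whereas you spell it out via Cauchy estimates on the homogeneous components; this is a harmless and correct addition.
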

\begin{proof}
Let us write $F$ as series \eqref{series}.
For any $u, v$ we have
$$dF^j(u)(v)=\sum_{\alpha,\beta}\frac{\partial}{\partial t}\bigg |_{t=0}
A^j_{\a\b}(u+tv)^\a(\bar u +t\bar v)^\b$$
$$=\sum_{\alpha,\beta}\sum_r
A^j_{\a\b}
\left(\a_rv_ru^{\a-1_r}\bar u^\b+\b_r\bar v_ru^{\a}\bar u^{\b-1_r}
\right),$$
where $1_r=(0,\dots,0,1,0,\dots)$ (1 is on the $r$-th place).
 Therefore
$$
\underline{dF^j(|u|)(|v|)}\leq \sum_{\alpha,\beta} \sum_r
|A^j_{\a\b}||u|^{\a+\b-1_r}|v_r|(\a_r+\b_r)=d{\underline F}^j(|u|)|v|.
$$
\end{proof}

\begin{lemma}\label{l1.4} i)
The class $\A_{m,\kappa}$ is closed with respect to composition of
germs.

ii)
If $F\in \A_{m,\kappa}$, then
$(id +F)^{-1}=id +G$, where $G\in \A_{m,\kappa}$.

iii) If $F\in\frak A_{m,\kappa}$, then the map $u\mapsto dF(u)u$ also belongs to 
$\frak A_{m,\kappa}$.
\end{lemma}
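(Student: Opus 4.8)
The plan is to prove the three closure properties of $\A_{m,\kappa}$ in the order i), ii), iii), reusing the corresponding results for n.a.\ germs (Lemmas~\ref{l1.1}, \ref{l1.2}, \ref{l1.3}) and then verifying the extra structural condition \eqref{1.5} in each case. Recall that membership in $\A_{m,\kappa}$ means: $F$ is a n.a.\ germ $h^m\to h^{m+\kappa}$, $F=O(u^2)$, and the majorant of the adjoint differential factors as $\underline{dF(|u|)^*|v|}=\Phi_F(|u|)|v|$ with $\Phi_F$ an analytic germ $h^m_R\to\mathcal L(h^m_R,h^{m+\kappa}_R)$ with nonnegative matrix entries. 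So in each case the work is: (a) check the underlying germ is n.a.\ of the right smoothing order, (b) check it is $O(u^2)$, (c) produce the factoring operator $\Phi$ with nonnegative entries and analytic dependence on $|u|$.

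For part i), given $F,G\in\A_{m,\kappa}$, the composition $G\circ F$ is n.a.\ by Lemma~\ref{l1.1}, and since $\kappa$-smoothing maps compose to a $\kappa$-smoothing map (indeed $F:h^m\to h^{m+\kappa}\hookrightarrow h^m$, then $G:h^m\to h^{m+\kappa}$), the target space is right; also $F=O(u^2)$ and $G=O(u^2)$ give $G\circ F=O(u^2)$ (in fact $O(u^4)$, but $O(u^2)$ is all that is required). The key computation is the chain rule for adjoints: $d(G\circ F)(u)^* = dF(u)^*\,dG(F(u))^*$. Passing to majorants and using $|F(u)|\le\underline F(|u|)$ together with monotonicity of all the majorant operators in their arguments, one gets $\underline{d(G\circ F)(|u|)^*|v|}\le \Phi_F(|u|)\,\Phi_G(\underline F(|u|))\,|v|$, so we may take $\Phi_{G\circ F}(|u|)=\Phi_F(|u|)\circ\Phi_G(\underline F(|u|))$; this is a product of two operators with nonnegative entries, hence has nonnegative entries, and it depends analytically on $|u|$ because $\Phi_F$, $\Phi_G$ and $\underline F$ do and composition of analytic maps (valued in spaces of operators) is analytic.

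For part ii), write $\mathrm{id}+F$ with $F\in\A_{m,\kappa}$. By Lemma~\ref{l1.2} the inverse germ exists and is n.a.; write it as $\id+G$. From $(\id+F)\circ(\id+G)=\id$ we read off $G(v)=-F(v+G(v))$, which is $O(v^2)$ since $F=O(v^2)$; moreover iterating this identity shows $G$ maps $h^m\to h^{m+\kappa}$ (each application of $F$ gains $\kappa$ derivatives). For the adjoint condition, differentiate the relation $F(v+G(v))+G(v)=0$ to get $(\id+dF(v+G(v)))\,dG(v)=-dF(v+G(v))$, so $dG(v)=-(\id+dF(v+G(v)))^{-1}dF(v+G(v))$, and hence $dG(v)^* = -dF(v+G(v))^*\,(\id+dF(v+G(v))^*)^{-1}$. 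The plan is to expand $(\id+dF(\cdot)^*)^{-1}=\sum_{k\ge0}(-dF(\cdot)^*)^k$ as a Neumann series and majorize termwise; using \eqref{1.5} for $F$ and the n.a.\ bounds on $\Phi_F$ (which give operator norms of $\Phi_F(|u|)$ as small as we like near the origin, shrinking $\delta$), the series of majorants converges and factors as $\Phi_G(|v|)|w|$ with $\Phi_G(|v|)$ built from $\Phi_F\big(|v|+\check G(|v|)\big)$ and its powers, where $\check G$ is the majorant from Lemma~\ref{l1.2}'s proof. Nonnegativity of entries is preserved under sums and products, and analyticity of $\Phi_G$ follows from that of $\Phi_F$, $\check G$, and the convergent Neumann series.

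For part iii), set $\tilde F(u)=dF(u)u$. Its series is obtained from that of $F$ by multiplying the $N$-homogeneous part by $N$ (Euler's identity: $dF_N(u)u=N F_N(u)$), so $\tilde F=\sum_N N F_N$; this is again n.a.\ with target $h^{m+\kappa}$ because multiplying the $N$-th term by $N$ only changes the majorant constant $R$ in the n.a.\ estimate to $eR$ (since $N\le C'R_0^N$ for any $R_0>1$), and it is $O(u^2)$. For the adjoint, $d\tilde F(u)v = dF(u)v + d\big(dF(\cdot)u\big)(u)v$, i.e.\ $d\tilde F(u)^*w = dF(u)^*w + \big(d(dF(\cdot)u)(u)\big)^*w$; both summands have majorants of the form "nonnegative analytic operator applied to $|w|$", the first being exactly $\Phi_F(|u|)|w|$ and the second coming from the second-order part of $F$'s majorant (bounding $d(dF(\cdot)u)(u)$ via $d^2\underline F$). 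Adding the two gives the required factorization $\Phi_{\tilde F}(|u|)|w|$ with nonnegative-entry, analytic $\Phi_{\tilde F}$.

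The main obstacle I expect is part ii): getting the adjoint-majorant condition \eqref{1.5} to survive the inversion. The natural identity $dG(v)^*=-dF(v+G(v))^*(\id+dF(v+G(v))^*)^{-1}$ involves the inverse operator, and one must justify that its Cauchy majorant is again of the special factored form $\Phi(|v|)|w|$ with nonnegative entries and analytic dependence — this requires carefully organizing the Neumann series at the level of majorants and using that $\A_{m,\kappa}$ is already known (from the composition part i)) to behave well under the substitution $v\mapsto v+G(v)$, together with uniform smallness of $\|\Phi_F(|v|)\|$ on a small ball to guarantee convergence. Parts i) and iii) are comparatively routine once the bookkeeping of majorants and the monotonicity of all the operators involved is set up.
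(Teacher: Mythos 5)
Your proposal follows essentially the same route as the paper: part i) via the chain rule for adjoints, $d(G\circ F)(u)^*=dF(u)^*dG(F(u))^*$, majorized using monotonicity and $|F(u)|\le\underline F(|u|)$, and part ii) via the Neumann series for the inverse of $\id+dF(\cdot)^*$ evaluated along the inverse map. For part iii) the paper skips the argument entirely (referring only to the technique of Lemma~\ref{l1.6}), and your Euler-identity sketch is a legitimate way to supply it.
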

\begin{proof}
\par  i) If $F, G \in \A_{m,\kappa}$, then
$F\circ G: h^m\rightarrow h^{m+\kappa}$ is n. a. by
Lemma~\ref{l1.1}.
It remains to verify that it satisfies \eqref{1.5}.
We have $d(F\circ G(u))^*=dG(u)^*dF(G(u))^*$.
Arguing as when proving Lemma \ref{l1.1} we get
$$
\underline{d(F\circ G(|u|))^*|v|}
\leq \Phi_G(|G(u)|)\Phi_F(|u|)|v|
\leq \Phi_G(\underline G(|u|))\Phi_F(|u|)|v|.
$$
So $F\circ G$ meets \eqref{1.5}.
\par  ii) Relations, obtained in the proof of
Lemma~\ref{l1.2}, imply that
$G: h^m\rightarrow h^{m+\kappa}$ is n. a. We have
$\ E+dG(u)^*=(E+dF(G(u))^*)^{-1}.$
Therefore
$dG(u)^*=\sum(-1)^k(dF(G(u))^*)^k$
and
$\underline{dG(|u|)^*|v|}
\leq \left(\sum\Phi_G(\underline G(|u|)^k\right)|v|.$
So $dG(u)^*$ satisfies \eqref{1.5} and $G\in \A_{m,\kappa}$.

iii) We skip an easy proof (cf. arguments in the proof of Lemma~\ref{l1.6}). 
\end{proof}

  Let $t\in [0,1]$ and
$V^t(u): [0,1]\times {\cal O}_\delta(h^m)\rightarrow h^{m+\kappa}$
be a continuous map, analytic in $u\in h^m$ and
such that $V^t\in \A_{m,\kappa} \,\,\forall t$, uniformly
in $t$. Consider the equation
\begin{equation}
\label{1.6}
\dot u(t)=V^t(u(t)),\quad u(0)=v,
\end{equation}
and denote by $\varphi^t$, $0\leq t\leq 1$,
its flow maps. That is, $\varphi^t(v)=u(t)$.
\begin{lemma}
\label{l1.5}
For each $0\leq t\leq 1$ we have
$\varphi^t -id \in \A_{m,\kappa}$.
\end{lemma}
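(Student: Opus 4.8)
The plan is to show that the flow $\varphi^t$ of \eqref{1.6} is obtained as a uniform limit of Picard iterations, and that each iteration produces a germ of the form $\id + (\text{something n.a. and }\kappa\text{-smoothing})$, with Cauchy majorants that stay bounded uniformly in $t\in[0,1]$. Write $\varphi^t = \id + W^t$, so that $W^t(v) = \int_0^t V^s(v + W^s(v))\,ds$. Since $V^s = O(u^2)$ uniformly in $s$, we have $W^t = O(v^2)$; the goal is to produce a single $t$-independent analytic germ $\check W : h_R^m \to h_R^{m+\kappa}$ with $\underline{W^t} \le \check W$ for all $t$, and similarly a majorant for the adjoint $dW^t(u)^*$ verifying \eqref{1.5}.

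First I would set up the majorant recursion for the Picard scheme. Let $W^{t,0} = 0$ and $W^{t,k+1}(v) = \int_0^t V^s\big(v + W^{t,k}(v)\big)\,ds$. Using that $|V^s(u)| \le \underline{V}(|u|)$ with $\underline V$ a $t$-independent analytic majorant (this is what ``$V^t \in \A_{m,\kappa}$ uniformly in $t$'' provides), and the fact that composition and the triangle inequality only increase majorants (exactly as in Lemmas~\ref{l1.1} and~\ref{l1.2}), one gets $\underline{W^{t,k+1}}(|v|) \le \underline{V}\big(|v| + \underline{W^{t,k}}(|v|)\big) =: \check W^{k+1}(|v|)$, where the right side no longer depends on $t$. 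The sequence $\check W^k$ is the Picard scheme for the majorant autonomous equation $\check W(x) = \underline V(x + \check W(x))$ in $h_R^m \to h_R^{m+\kappa}$; by the analytic implicit function theorem (using $\underline V = O(x^2)$ so the linearization at $0$ is invertible) this has an analytic germ solution $\check W$, and $\check W^k \to \check W$ with $\check W^k \le \check W$ for all $k$. Since $\underline{W^{t,k}} \le \check W^k \le \check W$ uniformly, passing to the limit gives $\underline{W^t} \le \check W$, hence $W^t$ is n.a.\ into $h^{m+\kappa}$ with a $t$-uniform majorant, and $W^t = O(v^2)$.

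Next I would handle the adjoint condition \eqref{1.5}. Differentiating the fixed-point identity, $dW^t(v) = \int_0^t dV^s\big(v+W^s(v)\big)\big(\id + dW^s(v)\big)\,ds$, so $dW^t(v)^* = \int_0^t \big(\id + dW^s(v)^*\big)\, dV^s(v+W^s(v))^*\,ds$. Since $V^s \in \A_{m,\kappa}$ uniformly, $\underline{dV^s(|u|)^*|\xi|} = \Phi_{V^s}(|u|)|\xi| \le \Phi(|u|)|\xi|$ for a $t$-independent analytic $\Phi \in {\cal L}(h_R^m, h_R^{m+\kappa})$-valued germ. Composing majorants (as in the proof of Lemma~\ref{l1.4}(i),(ii)) and using the already-established majorant $\check W$ for $W^s$, one obtains $\underline{dW^t(|v|)^*|\xi|} \le \Phi^W(|v|)|\xi|$ where $\Phi^W$ solves a linear (in the operator unknown) majorant fixed-point equation with $t$-independent, analytic, nonnegative-matrix-element coefficients; solving it by the same Neumann/iteration argument produces the required analytic germ $|v| \mapsto \Phi_{W^t}(|v|)$, uniform in $t$, with nonnegative entries, mapping $h_R^m$ into ${\cal L}(h_R^m, h_R^{m+\kappa})$. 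This verifies \eqref{1.5}, so $W^t = \varphi^t - \id \in \A_{m,\kappa}$.

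The main obstacle is purely a bookkeeping one: one must check that the majorant autonomous equations (both the one for $\check W$ and the linear one for $\Phi^W$) genuinely define \emph{analytic} germs on the stated scale, i.e.\ that the formal Picard iterates converge in the n.a.\ sense on a common neighbourhood of $0$ independent of $t$. This reduces to the standard estimate: if $\underline V_N$ has homogeneous bounds $\|\underline V_N(v)\| \le C R^N \|v\|^N$ uniformly in $t$, then the majorant fixed point inherits bounds of the same shape with a possibly larger constant and radius, by comparison with the scalar majorant series $y = C\sum_{N\ge2} R^N (x+y)^N$, which has a positive radius of convergence. Everything else — n.a.\ closure under composition, under inversion of $\id + (\cdot)$, and under differentiation — is supplied verbatim by Lemmas~\ref{l1.1}--\ref{l1.4}, and the continuity of $t \mapsto \varphi^t$ follows from the continuity of $t \mapsto V^t$ together with the uniform majorant bounds.
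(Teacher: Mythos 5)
Your proof is correct and uses essentially the same Cauchy--majorant strategy as the paper, which merely organizes the same estimates differently: it expands $u(t;v)$ into $v$-homogeneous components $u_k(t;v)$ and runs a recursion on the degree in place of your Picard/fixed-point packaging, and it writes $d\varphi^t(v)$ as an explicit iterated-integral (Dyson) series whose adjoint directly yields the majorant $\Phi^t(|v|)$ required by \eqref{1.5}. The only slip is notational: the Picard iterate should read $W^{s,k}$ rather than $W^{t,k}$ inside the integral, which does not affect your $t$-uniform majorant bounds since $t\le1$.
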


\begin{proof}
Denote a solution for \eqref{1.6} as $u=u(t;v)$,
and decompose $u(t;v)$ in series in $v$:
$\ u(t;v)=u_1(t;v)+u_2(t,v)+\dots,$
where $u_k(t;v)$ is $k$--homogeneous in $v$.
Then $u_1(t,v)\equiv v$.
Writing $V^t(u)=V_2^t(u)+V_3^t(u)+\dots$,
we have
$$\dot u_2(t)=V_2^t(u_1,u_1)=V_2^t(v,v),\quad u_2(0)=0.$$
Therefore
$\ u_2(t)=\int_0^tV_2^s(v,v)ds.$
Similar for $k\geq 2$ we have
$$u_k(t)=\sum\limits_{r=2}^{k-1}\sum\limits_{k_1+\dots +k_r=k}
\int\limits_0^{t}V_r^s(u_{k_1}(s),\dots,u_{k_r}(s))ds.$$
Arguing by induction we see that the sum
$\sum_{k=1}^\infty u_k(t,v)$ defines  a n.a.
germ. This  is the germ of the map  $\varphi^t(v)$.
\par For any vector $\xi$,
$d\varphi^t(v)\xi=w(t)$ is a solution of the linearized  equation
$$\dot w(t)=dV^t(u(t))w(t),\quad w(0)=\xi.$$
So $d\varphi^t(v)\xi=U(t)\xi$, where the linear operator
$U(t)$ may be calculated as follows
$$
U(t_0)=\id +\sum\limits_{n=1}^\infty\int\limits_0^{t_0}\int\limits_0^{t_1}
\dots\int\limits_0^{t_{n-1}}dV^{t_1}(u(t_1))\dots dV^{t_n}(u(t_n))
dt_n\dots dt_1.
$$
This series converges if $\|u(0)=v\|_m\ll 1$.
Taking the adjoint to the integral above we see that
$d\varphi^t(u)-id$ satisfies
 \eqref{1.5} and the corresponding operator $\Phi^t(|v|)$
meets the estimate
$$ \Phi^{t_0}(|v|)|\xi|\leq
\sum\limits_{n=1}^\infty\int\limits_0^{t_0}
\dots\int\limits_0^{t_{n-1}} \Phi_{V^{t_n}}(|u(t_n)|)\dots
\Phi_{V^{t_1}}(|u(t_1)|)|\xi|\,dt_n\dots dt_1.$$
Replacing $|u(t_n)|$ by $\underline\varphi^{t_n}(|v|)$
we see that the operator
$ \Phi^t$  defines an analytic germ
$h^m_R\rightarrow {\cal L}(h_R^m,h_R^{m+\kappa})$. So
$\varphi^t -id \in \A_{m,\kappa}$.
\end{proof}

 Let $G_0, F_0\in \A_{m,\kappa}$. Denote
$F(u)=u+F_0(u)$. The arguments in Section~\ref{s4} use the map
$ B(u)=dG_0(u)^*(iF(u))$.

\begin{lemma}\label{l1.6}
$B\in \A_{m,\kappa}$.
\end{lemma}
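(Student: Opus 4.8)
The plan is to realize $B$ as a composition of maps that are already known to live in $\A_{m,\kappa}$ (or to differ from the identity by such a map), and then invoke the closure properties established in Lemma~\ref{l1.4}. Write $F(u)=u+F_0(u)$ with $F_0\in\A_{m,\kappa}$, so $dF(u)=\id+dF_0(u)$ and
$$
B(u)=dG_0(u)^*\big(iF(u)\big)=dG_0(u)^*(iu)+dG_0(u)^*\big(iF_0(u)\big).
$$
The second term is a composition of n.a. germs, each mapping into the appropriate space, hence is n.a. by Lemma~\ref{l1.1}; moreover, since $G_0\in\A_{m,\kappa}$ we have $dG_0(u)^*$ acting $h^m\to h^{m+\kappa}$ with the majorant bound \eqref{1.5}, and $iF_0(u)\in h^{m+\kappa}\subset h^m$, so this term already lies in $h^{m+\kappa}$ with a controlled Cauchy majorant; it is $O(u^2)$ because $F_0=O(u^2)$. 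So the second term alone belongs to $\A_{m,\kappa}$.

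For the first term, $u\mapsto dG_0(u)^*(iu)$, the point is that $i$ is a bounded operator on every $h^s$ (it is $J$ with $J^*=-J$, so harmless for majorant purposes — we simply majorize $|iu|=|u|$), and by hypothesis $dG_0(u)^*-\id$ maps $h^m\to h^{m+\kappa}$, with $\underline{dG_0(|u|)^*|v|}=\Phi_{G_0}(|u|)|v|$ where $|u|\mapsto\Phi_{G_0}(|u|)$ is an analytic germ $h^m_R\to\mathcal L(h^m_R,h^{m+\kappa}_R)$. Hence
$$
dG_0(u)^*(iu)=iu+\big(dG_0(u)^*-\id\big)(iu),
$$
and the second piece here is n.a. into $h^{m+\kappa}$ with majorant $\le\Phi_{G_0}(|u|)|u|$, which defines an analytic germ $h^m_R\to h^{m+\kappa}_R$ (product of an analytic operator-valued germ with the identity map). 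It is $O(u^2)$ because $G_0=O(u^2)$ forces $dG_0(0)=0$, so $dG_0(u)^*-\id=dG_0(u)^*=O(u)$ and the product with $iu$ is $O(u^2)$. In fact this is exactly the map treated in Lemma~\ref{l1.4}(iii) applied to $G_0$, up to the harmless factor $i$ and passage to adjoints; one can either cite that lemma directly or repeat its one-line majorant argument.

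Collecting the pieces: $B(u)=iu+\big(dG_0(u)^*-\id\big)(iu)+dG_0(u)^*(iF_0(u))$, where the $iu$ term is the linear part — but wait, $B$ should be $O(u^2)$ for membership in $\A_{m,\kappa}$: indeed $F(u)=u+F_0(u)$ so the "$iu$" contribution is $dG_0(u)^*(iu)$ whose linear-in-$u$ part is $dG_0(0)^*(iu)=0$ since $dG_0(0)=0$. Thus $B=O(u^2)$ and $B$ decomposes as a finite sum of maps, each n.a. from $h^m$ into $h^{m+\kappa}$ with an explicit analytic Cauchy majorant and each $O(u^2)$; finally, to confirm $B\in\A_{m,\kappa}$ one must also check that $dB(u)^*$ satisfies \eqref{1.5}, which follows by differentiating the explicit formula for $B$ (using Lemma~\ref{l1.3} to control $d$ of each factor and Lemma~\ref{l1.1} for the compositions), exactly as in the proofs of Lemma~\ref{l1.4}(i),(ii). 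The main obstacle — really the only point requiring care rather than routine majorant bookkeeping — is verifying the adjoint condition \eqref{1.5} for $dB(u)^*$: one has to differentiate a product of three $u$-dependent factors ($dG_0(u)^*$, the constant $i$, and $F(u)=u+F_0(u)$), take the adjoint, and bound it by a nonnegative-matrix analytic operator germ; this is where the hypotheses that both $G_0$ and $F_0$ lie in $\A_{m,\kappa}$ (not merely that they are n.a.) are used, since one needs majorants for $d(dG_0(u)^*)$ as well.
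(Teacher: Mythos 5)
Your decomposition and the normal-analyticity part are fine (the paper gets n.a.\ of $B$ in one line from $\underline B(|u|)\le\Phi_{G_0}(|u|)\,\underline F(|u|)$, without splitting off the $iu$ term), but there is a genuine gap exactly at the point you flag as ``the main obstacle'' and then wave through. Differentiating $B(u)=dG_0(u)^*\bigl(iF(u)\bigr)$ by the product rule gives $dB(u)=M_1+M_2$ with $M_1=dG_0(u)^*\,i\,dF(u)$ and $M_2=d_u\bigl(dG_0(u)^*U\bigr)$, $U=iF(u)$ frozen. The term $M_1^*=-dF(u)^*\,i\,dG_0(u)$ is indeed routine: it is a product of adjoints and differentials each controlled by the hypotheses and Lemma~\ref{l1.3}. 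But for $M_2^*$ your claim that it ``follows by differentiating the explicit formula \dots exactly as in the proofs of Lemma~\ref{l1.4}(i),(ii)'' does not hold up: those proofs handle compositions via the chain rule $d(F\circ G)^*=dG^*\,dF^*$, whereas here the object is the second differential $d^2G_0(u)(\xi,v)$ contracted with $U$, and the hypothesis $G_0\in\A_{m,\kappa}$ gives a majorant for $dG_0(u)^*$ applied to vectors, not for the adjoint of its $u$-derivative. Saying ``one needs majorants for $d(dG_0(u)^*)$'' names the difficulty without resolving it.

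The missing idea in the paper's proof is a symmetry observation: writing
\begin{equation*}
\left<M_2\xi,v\right>=\frac{\partial}{\partial t}\Big|_{t=0}\left<dG_0(u(t))^*U,v\right>
=\frac{\partial}{\partial t}\Big|_{t=0}\left<U,dG_0(u(t))v\right>
=\left<U,d^2G_0(u)(\xi,v)\right>,
\end{equation*}
which is symmetric in $(\xi,v)$, one gets $M_2^*=M_2=dR(u)$ with $R(u)=dG_0(u)^*U$. Since $R$ is n.a.\ with $\underline R(|u|)\le\Phi_{G_0}(|u|)|U|$, Lemma~\ref{l1.3} bounds $\underline{dR(|u|)|v|}$ by $\bigl(d_{|u|}\Phi_{G_0}(|u|)|v|\bigr)\bigl(\underline F_0(|u|)+|u|\bigr)$, which is of the required form \eqref{1.5}. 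Without this self-adjointness step the adjoint condition for $dB(u)^*$ is not established, so the proof is incomplete. (A smaller issue: your manipulation $dG_0(u)^*(iu)=iu+(dG_0(u)^*-\id)(iu)$ is wrong as written --- since $G_0\in\A_{m,\kappa}$ is the $O(u^2)$ part, $dG_0(u)^*$ is already $O(u)$ and maps $h^m\to h^{m+\kappa}$; there is no identity to subtract. This does not affect the verdict, but it signals a conflation of $G_0$ with $\id+G_0$.)
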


\begin{proof}
We have $\underline B(|u|)\leq \Phi_{G_0}(|u|)|iF(u)|\leq
\Phi_{G_0}(|u|)\underline F(|u|).$
Since the map in r.h.s. defines an analytic germ
$h^m_R\rightarrow h_R^{m+\kappa}$,
then $B$ is n.a.
\par It remains to check that
$B$ meets \eqref{1.5}. We have $dB(u)\xi=M_1\xi+M_2\xi$,
where
$M_1=dG_0(u)^*idF(u)$
and
$$
M_2=dR(u),\quad
R(u)=
dG_0(u)^*U,\;\;U=iF(u).
$$
Since $M_1^*v=-dF(u)^*idG_0(u)v$, then by Lemma~\ref{l1.3}
$$\underline{M_1(|u|)^*|v|}\leq \left(\Phi_{F_0}(|u|)+E\right)
d\underline{G_0}(|u|)|v|.$$
So $M_1^*v$ has the required form.
 Now consider $M_2$.
 Let $u(t)$ be a smooth curve in $h^m$ such that $u(0)=u$
and $\dot u(0)=\xi$. Then
$$\left<M_2\xi,v\right>=\frac{\partial}{\partial t}\bigg|_{t=0}
\left<dG_0(u(t))^*U,v\right>=$$
$$=\frac{\partial}{\partial t}\bigg|_{t=0}
\left<U, dG_0(u(t))v\right>=\left<U, d^2G_0(u)(\xi,v)\right>.$$
Hence,
$\ M_2^*v=M_2 v=dR(u)v$.
Due to \eqref{1.5} the map $R$ is n.a. and
$\ \underline R(|u|)\le\Phi_{G_0}(|u|)|U|.$
Now Lemma~\ref{l1.3} implies that
$$ \underline{dR(|u|)|v|}
\leq \big(d_{|u|}\Phi_{G_0}(|u|)|v|\big)|U|\leq  \big(d_{|u|}\Phi_{G_0}(|u|)|v|\big)
\big(\underline F_0(|u|)+|u|\big).$$
This component of $ \underline{dB(|u|)^*|v|}$ also has the
required form. So $B$
satisfies \eqref{1.5}.
\end{proof}

\section{Proof of the main theorem}\label{s4}

In this section we prove Theorem~\ref{t2.1}, following the scheme, suggested
in Section~VI of \cite{E90}. To overcome corresponding infinite-dimensional
difficulties we check recursively that all involved germs $\Psi$ of
transformations of the phase-space $h^m$ are of the form $\id+\Psi_0$, where
$\Psi_0\in\frak A_{m,\kappa}$.

 By Lemma~\ref{l1.4} the germ $G=\Psi^{-1}$ is n.a.
and $G=id +G_0$, where $ G_0\in \A_{m,\kappa}$. Denote
$$
\om_1=G^*\om_0,\quad 
\om_\triangle=\om_1-\om_0.
$$
We have
$\om_1=\bar J_1(v) dv\wedge dv$ (see \eqref{2.0}), where
\begin{equation*}
\bar J_1(v)=
i +dG_0(v)^*idG(v) +idG_0(v)=:i+\bar\Upsilon_0(v).
\end{equation*}
Therefore $\omega_1=d\a_1$, where
$$
\a_1(v)\xi=\langle\int_0^1\bar J_1(tv)tv,\xi\rangle\,dt=
\a_0(v)\xi+\langle W(v),\xi\rangle,\quad
W(v)=\int_0^1\bar\Upsilon (tv)tv\,dt
$$
(cf. Lemma 1.3 in \cite{K2} and the corresponding references). So 
$$
\omega_\Delta=d\a_\Delta, \qquad
\a_\Delta=W(v)dv.
 $$
 Lemmas~\ref{l1.6} and \ref{l1.4}~iii) imply that $W\in\frak A_{m,\kappa}$.

\par Our goal  is to find a transformation $\Theta: h^m\rightarrow h^m$
which satisfies i), commutes with the rotations
$u_j\rightarrow e^{i\tau}u_j \quad (j\geq 1,\,\, \tau\in\R)$,
and which ``kills'' the form $\a_\tri$,
thus reducing $\a_1$ to $\a_0$ and $\om_1$
to $\om_0$. Then the mapping $\Psi^+=\Theta\circ \Psi$ would satisfy the required
properties.
We will construct such $\Theta$ in two steps.\medskip

\noindent
{\bf Step 1.}\
At this step we will achieve that the average in angles
of the form $\om_1$ equal to $\om_0$.
\par For $j\geq 1$ and $\tau\in S^1=\R/2\pi \Z$
we define $\Phi_j^\tau: h^m\rightarrow h^m$
as the linear transformation of vectors
$(u_1,u_2,\dots)$ which does not change components $u_l$,
$l\neq j$, and multiplies $u_j$ by $e^{i\tau}$.
Clearly, $(\Phi_j^\tau)^*=\Phi_j^{-\tau}$.
Therefore for a 1-form $\a=F(u)du$
we have
$$(\Phi_j^\tau)^*\a(u)=\left(\Phi_j^{-\tau}F(\Phi_j^\tau(u))\right)du.$$
\par For any function $f(u)$ we define its averaging with respect to
$j$-th angle as
$$M_jf(u)=\frac{1}{2\pi}\int\limits_{0}^{2\pi} f(\Phi_j^t u)dt,$$
and define its averaging in all angles as
$$Mf(u)=(M_1 M_2 \dots)f(u)=\int\limits_{\T^\infty}
f(\Phi^\theta u)d\theta,$$
where $d\theta$ is the Haar measure on $\T^\infty$
and $\Phi^\theta u=(\Phi_1^{\theta_1}\circ \Phi_2^{\theta_2}\circ \dots)u$.
 For a form $\a$ we define $M_j\a$ ad $M\a$ similarly.
That is
$$M_j\a(u)=\frac{1}{2\pi}\int_0^{2\pi}((\Phi_j^t)^*\a)( u)dt,$$
and $M\a=(M_1M_2\dots)\a$. In particular,
$$
M_j(F(u)du)=
\left(\frac{1}{2\pi}\int_0^{2\pi}
\Phi_j^{-\tau}F(\Phi_j^\tau u)\,d\tau\right)du.
$$
\par Since
$$\Phi_j^{\tau^*}\om_1=
\left(\Phi_j^{-\tau}\bar J_1(\Phi_j^\tau v)\Phi_j^\tau\right)dv\wedge dv,$$
then
$$(M\omega_1)(v)= (M{\bar J}_1)(v)dv\wedge dv,\quad
(M\bar J_1)(v)=\int\limits_{\T^\infty}\Phi^{-\theta}
\bar J_1(\Phi^\theta v)\Phi^\theta d\theta.$$

Let us define
$$
(M\bar J)^\tau(v)=(1-\tau)i+\tau  (M\bar J_1)(v).
$$
 The operator $\bar J_1(v)$  is $i+O(v)$. But the averaging in $\theta$
cancels linear in $v$ terms, so
$$
(M\bar J)^\tau(v)=i+\tau \bar \Upsilon(v),\quad \bar\Upsilon(v)=O(v^2).
$$
The operator $\bar\Upsilon(v)\in {\cal L}(h^m,h^{m+\kappa})$
is analytic in $v\in h^m$ and is antisymmetric,
$\bar\Upsilon(v)^*=-\bar\Upsilon(v)$.
So the germ $v\rightarrow \bar\Upsilon(v)\xi$
belongs to
$\A_{m,\kappa}$
for any $\xi\in h^m$.
Cf. the proof of Lemma \ref{l1.5}.

Next we set
$$
 \hat{ J}^\tau(v)=- \big((M\bar J)^\tau(v)\big)^{-1}=-(i+\tau \bar \Upsilon(v))^{-1}.
 $$
 Writing $(i+ \tau \bar\Upsilon(v))^{-1}$
as a Neumann series we see that
$\hat J^\tau(v)=i+\hat\Upsilon^\tau(v),$
where the operator-valued map
$v\mapsto\hat\Upsilon^\tau(v)$ enjoys  the same smoothness properties as
$\bar\Upsilon(v)$.
\par Now consider the average of the  1-form
$\a_\tri=W(v)dv$. We have
$$M\a_\tri=(MW)(v)dv,\quad
(MW)(v)=
\int\limits_{\T^\infty}\Phi^{-\theta}
W(\Phi^\theta v)d\theta.$$
Since $W \in \A_{m,\kappa}$, then also $(MW)\in \A_{m,\kappa}$.
Let us define the mappings
$$V^\tau(v)=\hat J^\tau(v)(MW)(v), \quad 0\leq \tau\leq 1.$$
Due to the properties of $\hat J^\tau(v)$ and $(MW)(v)$,
$V^\tau(v) \in \A_{m,\kappa}$
for each $\tau$. Consider the equation
$$\dot v(\tau)=V^\tau(v(\tau))$$
and denote by $\varphi^\tau$, $0\leq \tau\leq 1$, its flow
maps, $\varphi^\tau(v(0))=v(\tau)$. By Lemma~\ref{l1.5},\,
$\varphi^\tau-id \in \A_{m,\kappa}$.
The operator
$\hat J^\tau(v)$ commutes with the rotations
$\Phi_j^\theta$:
$$\hat J^\tau (\Phi^{\theta_0}v)\Phi^{\theta_0}\xi=
\Phi^{\theta_0}
\hat J^\tau(v)
\xi.$$
The map $(MW)(v)$ also commutes with them.
Accordingly, the maps $V^\tau(v)$ commute with
$\Phi_j^\theta$, as well as the flow maps
$\varphi^\tau$.

 Let us denote
$\hat\om^\tau=(M\bar J)^\tau(v)dv\wedge dv$.
So $\hat\om^1=M\om_1$ and $\hat\om^0=\om_0$.
We claim that
$$(\varphi^\tau)^*\hat \om^\tau=const.$$
To prove this we first note that
$$\frac{d}{d\tau}\hat\om^\tau=M\om_1-\om_0=M(\om_1-\om_0)=
Md\a_\tri=dM\a_\tri.$$
Using the Cartan formula (e.g., see Lemma~1.2 in \cite{K2})
 we have
$$
\frac{d}{d\tau}\big((\varphi^\tau)^*\hat\om^\tau\big)
=(\varphi^\tau)^*\bigg(\frac{\partial\hat\om^\tau}{\partial\tau}+
d(V^\tau\rfloor
\hat\om^\tau)\bigg)=(\varphi^\tau)^* d\big(M\a_\tri+
V^\tau\rfloor\,\hat\om^\tau\big).
$$
The 1-form in the r.h.s. equals
$$(MW)dv+((M\bar J)^\tau V^\tau)dv=(MW)dv-(MW)dv=0,$$
and the assertion follows.
\par Since the maps $\varphi^\tau$ commute
with the rotations $\Phi^\theta_j$, we have
$$
\om_0=
(\varphi^0)^*\hat\om^0=(\varphi^1)^*M\om_1=
M(\varphi^1)^*\om_1.
$$
 Denote $\bar{\Psi}=(\varphi^1)^{-1}\circ\Psi$.
Then $\bar{\Psi}$ satisfies assumptions i), ii)
and in addition
$M\big((\bar\Psi^*)^{-1}\omega_0\big)=\omega_0$.
Since $\varphi^1$ commutes with the rotations,
then $\bar{\Psi}$ satisfies assertion a) of Theorem \ref{t2.1}.
\smallskip
\par We re-denote back $\bar{\Psi}=\Psi$. Then
\smallskip

\noindent
iii) $M\om_1=\om_0$ for $\om_1=\big(\bar{\Psi}^*)^{-1}\om_0$.
\medskip

\noindent
{\bf Step 2.} \
Now we prove the  theorem, assuming that $\Psi$ meets i) -- iii).
Due to iii) we have
$dM\a_\tri=d(M\a_1-\a_0)=M\om_1-\om_0=0$.
Therefore $M\a_\tri=dg$ for a suitable function $g$.
Since $dg=Mdg=dMg$,
we may assume that $g=Mg$. Accordingly,
$\frac{\partial}{\partial\tau}g\big(\Phi_j^\tau(v)\big)=0 \quad \forall\,j.$
Rotations $\Phi_j^\tau$, $\tau\in \R$, correspond to the vector fields
$\chi_j(v)=(0,\dots, iv_j, 0,\dots)$.
So $(dg, \chi_j)=0$
and for each $j$ we have
\begin{equation}\label{2.3}
M(\a_\tri,\chi_j)=(M\a_\tri,\chi_j)=(dg,\chi_j)=0.
\end{equation}

Denote  $h_j(v)=(\a_\tri,\chi_j)$ and consider the system of differential  equations
for a germ of a functional $f:h^m\rightarrow \R$:
\begin{equation}\label{2.4}
(df,\chi_j)\equiv(iv_j \cdot\nabla_{v_j})f(v)=h_j(v),
\quad j\geq 1.
\end{equation}
 First we will check that the vector of the r.h.s.'
$(h_1,h_2,\dots)$ satisfies certain compatibility conditions.
Since
$d\a_0(\chi_i,\chi_j)=\om_0(\chi_i,\chi_j)=0$, then
$$0=d\a_0(\chi_i,\chi_j)=\chi_i(\a_0,\chi_j)-\chi_j(\a_0,\chi_i)-
(\a_0,[\chi_i,\chi_j]),$$
where
$[\cdot,\cdot]$ is the commutator of  vector--fields. Hence,
\begin{equation}\label{2.5}
\chi_i(\a_0,\chi_j)=\chi_j(\a_0,\chi_i).
\end{equation}

\begin{lemma}\label{l2.1}
For any $i$ and $j$ we have
$\chi_i(\a_1,\chi_j)=\chi_j(\a_1,\chi_i)$.
\end{lemma}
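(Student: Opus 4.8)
The plan is to exploit the structure of the 1-form $\a_1$ relative to $\a_0$ together with the compatibility relation \eqref{2.5} that was just derived for $\a_0$. Write $\a_1 = \a_0 + \a_\triangle$ with $\a_\triangle = W(v)\,dv$. Then $\chi_i(\a_1,\chi_j) - \chi_j(\a_1,\chi_i) = \big(\chi_i(\a_0,\chi_j)-\chi_j(\a_0,\chi_i)\big) + \big(\chi_i(\a_\triangle,\chi_j) - \chi_j(\a_\triangle,\chi_i)\big)$, and the first bracket vanishes by \eqref{2.5}. So the lemma reduces to the analogous statement for $\a_\triangle$, namely $\chi_i(\a_\triangle,\chi_j) = \chi_j(\a_\triangle,\chi_i)$ for all $i,j$.

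To prove this I would use the fact that $\om_\triangle = d\a_\triangle$ together with the exterior-derivative formula $d\a_\triangle(\chi_i,\chi_j) = \chi_i(\a_\triangle,\chi_j) - \chi_j(\a_\triangle,\chi_i) - (\a_\triangle,[\chi_i,\chi_j])$. Since the rotation fields $\chi_i, \chi_j$ commute (each $\chi_i(v) = (0,\dots,iv_i,\dots,0)$ acts only on the $i$-th coordinate, so $[\chi_i,\chi_j]=0$), the last term drops and the desired symmetry is equivalent to $\om_\triangle(\chi_i,\chi_j) = 0$. Now $\om_\triangle = \om_1 - \om_0$ and by construction $\om_1 = (\Psi^*)^{-1}\om_0$ with $\Psi$ satisfying i), ii); in particular the functionals $I^j(\Psi(u)) = \tfrac12|\Psi^j(u)|^2$ commute with respect to $\om_0$. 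The key point is that with respect to $\om_1$ the coordinate actions $I^j_{\rm coord}(v) = \tfrac12|v_j|^2$ play the role of the commuting integrals (since $\Psi$ pushes them to the $I^j(\Psi(u))$), and the Hamiltonian vector field of $I^j_{\rm coord}$ with respect to $\om_0$ is exactly $\chi_j$. Hence, evaluating: $\om_0(\chi_i,\chi_j) = \{I^i_{\rm coord}, I^j_{\rm coord}\}_{\om_0} = 0$ trivially, and $\om_1(\chi_i,\chi_j) = $ the Poisson bracket of the pullbacks, which is the pullback of $\{I^i(\Psi), I^j(\Psi)\}_{\om_0}$, and this vanishes by assumption ii). Subtracting gives $\om_\triangle(\chi_i,\chi_j) = 0$.

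The cleanest route is probably slightly different and avoids talking about Hamiltonian fields for a possibly only pre-symplectic $\om_1$: note $\om_0(\chi_i,\chi_j) = \langle i\chi_i, \chi_j\rangle = \langle i(0,\dots,iv_i,\dots), (0,\dots,iv_j,\dots)\rangle = 0$ directly (the supports are disjoint for $i\neq j$, and for $i=j$ the bracket is zero by antisymmetry). For $\om_1 = G^*\om_0$ with $G = \Psi^{-1}$, one has $\om_1(\chi_i,\chi_j)(v) = \om_0\big(dG(v)\chi_i(v), dG(v)\chi_j(v)\big)$. Here $dG(v)\chi_j(v)$ is the infinitesimal generator, at $u = G(v)$, of the pulled-back rotation flow $t \mapsto G\big(\Phi_j^t(G^{-1}(u))\big)$; since $I^j\circ\Psi = \tfrac12|\Psi^j|^2$ and $\Phi_j^t$ generates the flow of $\tfrac12|v_j|^2$ for $\om_0$, this generator is precisely $i\nabla\big(\tfrac12|\Psi^j(u)|^2\big)$, the $\om_0$-Hamiltonian field of $I^j(\Psi(\cdot))$. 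Therefore $\om_1(\chi_i,\chi_j)(v) = \langle i\,i\nabla I^i(\Psi), i\nabla I^j(\Psi)\rangle_{u=G(v)} = -\{I^i\circ\Psi, I^j\circ\Psi\}(G(v)) = 0$ by ii). Hence $\om_\triangle(\chi_i,\chi_j) = 0$, and combined with the vanishing of the commutator term and \eqref{2.5}, the lemma follows.

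The main obstacle is the middle identification: that $dG(v)\chi_j(v)$ is the $\om_0$-Hamiltonian vector field of $u\mapsto\tfrac12|\Psi^j(u)|^2$. This requires checking that $\Psi$ conjugates the linear rotation $\Phi_j^t$ to the Hamiltonian flow of $I^j(\Psi(\cdot))$ — which is true because $\tfrac12|\Psi^j(\Psi^{-1}(w))|^2$... more precisely, because $\Psi^j(\Psi(u)^{\text{—}})$: one must be careful that $I^j(\Psi(u)) := \tfrac12|\Psi^j(u)|^2$, so on the $v$-side (where $v = \Psi(u)$) the relevant function is $\tfrac12|v_j|^2$, whose $\om_1$-Hamiltonian flow is $\Phi_j^t$ and whose $\om_0$-image under $G = \Psi^{-1}$ is the flow whose generator we want. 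All of this is a standard naturality computation, but one should state it carefully since $\om_1$ need not a priori be symplectic (only closed); fortunately the argument only needs $\om_1 = G^*\om_0$ and the definitions, not invertibility of $\om_1$.
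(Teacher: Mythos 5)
Your reduction of the lemma to the single identity $\om_1(\chi_i,\chi_j)=0$ is exactly the paper's first step: since $[\chi_i,\chi_j]=0$, the exterior-derivative formula shows the asserted symmetry is equivalent to $d\a_1(\chi_i,\chi_j)=\om_1(\chi_i,\chi_j)=0$, and \eqref{2.5} handles the $\a_0$ part. The gap is in how you prove this identity. You write $\om_1(\chi_i,\chi_j)(v)=\om_0\bigl(dG(v)\chi_i(v),dG(v)\chi_j(v)\bigr)$ (correct) and then identify $dG(v)\chi_j(v)$ with $i\nabla\bigl(\tfrac12|\Psi^j(u)|^2\bigr)$ at $u=G(v)$. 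This identification is false in general: $dG(v)\chi_j(v)$ is the push-forward under $G$ of the $\om_0$-Hamiltonian field of $\tfrac12|v_j|^2$, hence it is the Hamiltonian field of $I^j\circ\Psi$ with respect to the transported form $\Psi^*\om_0$, \emph{not} with respect to $\om_0$ on the $u$-side. Requiring $dG(v)\chi_j(v)=i\nabla(I^j\circ\Psi)$ for all $j$ amounts to $d\Psi(u)\,i\,d\Psi(u)^*=i$ on the span of the $v_j1_j$, i.e.\ to $\Psi$ being symplectic there --- which is the conclusion of Theorem~\ref{t2.1}, not a hypothesis. The same conflation appears in your first formulation, where you equate $\om_1(\chi_i,\chi_j)$ with the $\om_1$-Poisson bracket of $\tfrac12|v_i|^2$ and $\tfrac12|v_j|^2$: that bracket is $\langle J_1\nabla I^i,\nabla I^j\rangle$ with $J_1=-\bar J_1^{-1}$ the Poisson tensor of $\om_1$, and it does vanish by ii) and naturality; but it is evaluated on the gradients $v_j1_j$, not on the fields $\chi_j=iv_j1_j$, and the form pairing them is $\bar J_1\ne i$.

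What assumption ii) actually yields after pulling back by $G$ is relation \eqref{2.6}: $\langle J_1(v)\xi,\eta\rangle=0$ for all $\xi,\eta\in\Sigma_v=\linspan\{v_j1_j\}$. The missing step --- and the real content of the paper's proof --- is converting this statement about the Poisson tensor $J_1$ on $\Sigma_v$ into the statement about the form $\bar J_1$ on $i\Sigma_v$. The paper does this by noting that the $h^0$-orthogonal complement of $\Sigma_v$ is $i\Sigma_v$, so \eqref{2.6} says $J_1:\Sigma_v\to i\Sigma_v$; since $J_1=i+O(v)$, this restriction is an isomorphism for small $v$, so each $\chi_j\in i\Sigma_v$ can be written as $\chi_j=J_1\xi_j$ with $\xi_j\in\Sigma_v$, whence $\om_1(\chi_i,\chi_j)=\langle\bar J_1J_1\xi_i,J_1\xi_j\rangle=-\langle\xi_i,J_1\xi_j\rangle=0$ because $J_1\xi_j\in i\Sigma_v\perp\Sigma_v$. (Equivalently: one must show that the tangent space $dG(v)(i\Sigma_v)$ of the invariant torus coincides with the isotropic span of the commuting Hamiltonian fields $i\nabla(I^k\circ\Psi)$; this is an invertibility statement valid only for small $v$, not a formal naturality computation.) You correctly flagged this identification as the main obstacle, but the resolution you propose does not close it.
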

\begin{proof}
Recall that $\om_1=\bar J_1(v)dv\wedge dv$,
where
$\bar J_1(v)=i+\bar\Upsilon_0(v)$
and $\bar\Upsilon_0(v)=O(v)$. The operator
$J_1(v)=-\bar J_1(v)^{-1}$
exists for small $v\in h^m$, is antisymmetric and
can be written as $J_1(v)=i+\Upsilon_0(v)$,
where $\Upsilon_0(v)$
belongs to ${\cal L}(h^m,h^{m+\kappa})$.
By interpolation,
$\Upsilon_0(v)\in {\cal L}(h^0,h^{\kappa})$.

To prove the lemma it suffices to show that
\begin{equation}\label{2.100}
\om_1(\chi_i,\chi_j)=0\quad \forall \,i,\,j
\end{equation}
since then the assertion would follow
by the  arguments,  used to establish  \eqref{2.5}.
Moreover, by continuity it suffices to verify the relation at a point
$v=(v_1, v_2,\dots)$ such that $v_j\neq 0$ for all $j$.

 Due to ii), $\{I^j(v),I^k(v)\}_{\om_1}=0$
for any $j$ and $k$.
That is
\begin{equation}\label{2.6}
0=\left<J_1(v)\nabla I^j(v),\nabla I^k(v)\right>=
\left<J_1(v)v_j1_j,v_k1_k\right>
\quad \forall \,j, k.
\end{equation}
Consider the space
$\Sigma_v=\linspan\{v_j1_j,j\ge1\}$
(as before $1_r=(0,\dots,1,\dots)$, where 1 is
on the $r$-th place).
 Its orthogonal complement in $h^0$
is $i\Sigma_v=\linspan\{iv_j1_j, j\ge1\}$.
Relations \eqref{2.6} imply that
$\left<J_1(v)\xi, \eta\right>=0$
for any $\xi,\,\eta\in \Sigma_v$.
Hence,
\begin{equation*}
J_1: \Sigma_v\rightarrow i\Sigma_v.
\end{equation*}
Since $J_1-i=O(v)$, then for small $v$ this linear operator 
is an isomorphism.
As $\chi_i,\,\chi_j\in i\Sigma_v$,
then there exist $\xi_i,\,\xi_j \in \Sigma_v$
such that $J_1\xi_i=\chi_i$, $J_1\xi_j=\chi_j$. So
$\ \om_1(\chi_i, \chi_j)=\left<\bar J_1 J_1\xi_i,  J_1\xi_j\right>=
-\left<\xi_i,  J_1\xi_j\right>=0$
and the lemma is proved.
\end{proof}

 By \eqref{2.5} and the lemma above, relation \eqref{2.5}
also holds with $\a_0$ replaced by $\a_\tri$. That is,
\begin{equation}\label{2.7}
\chi_j(h_k)=\chi_k(h_j) \quad \forall\, j,\,k.
\end{equation}
Also note  that by \eqref{2.3}
\begin{equation}\label{2.8}
Mh_j=0\quad \forall\,j.
\end{equation}
 For any function $g(v)$ and for $j=1,2,\dots$ denote
$$L_jg(v)=\frac{1}{2\pi}\int\limits_0^{2\pi}t\,g(\Phi_j^t(v))dt.$$
 Due to \eqref{2.7},  \eqref{2.8} the system of equations
 \eqref{2.4} is solvable and its solution is given by an explicit
formula due to J.~Moser (see in  \cite{E90}):

\begin{lemma}\label{l2.2}
Consider the germ $f$ of a function in $h^m$:
$$f(v)=\sum\limits_{l=1}^\infty f_l(v),
\quad f_l=M_1\dots M_{l-1}L_lh_l.$$
If the series converges in $C^0(h^m)$,
as well as the series for $\chi_j(f)$, $j\geq 1$,
then $f$ is a solution of \eqref{2.4}.
\end{lemma}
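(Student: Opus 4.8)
The plan is to check directly that $f=\sum_{l\ge1}f_l$, with $f_l=M_1\cdots M_{l-1}L_lh_l$, satisfies $\chi_j(f)=h_j$ for each fixed $j$; here $\chi_j$ acts on functions by $\chi_j(g)(v)=\frac{d}{dt}\big|_{t=0}g(\Phi_j^tv)$. Since by hypothesis the series for $f$ and for each $\chi_j(f)$ converge in $C^0(h^m)$, one may differentiate term by term, so it suffices to evaluate $\sum_{l\ge1}\chi_j(f_l)$. I would first record a few elementary facts about $M_k$, $L_k$, $\chi_k$: (a) operators acting in distinct angles commute, so $\chi_jM_k=M_k\chi_j$ and $\chi_jL_k=L_k\chi_j$ whenever $k\ne j$; (b) for any $g$ one has $M_j(\chi_jg)(v)=\frac1{2\pi}\int_0^{2\pi}\frac{d}{dt}g(\Phi_j^tv)\,dt=0$ since $\Phi_j^{2\pi}=\id$, so $\chi_j$ annihilates every function of the form $M_j(\,\cdot\,)$; (c) the Moser identities $\chi_j(L_jg)=L_j(\chi_jg)=g-M_jg$, obtained from the change of variable $t\mapsto t+\tau$ inside the defining integral (respectively integration by parts) together with $2\pi$-periodicity and $\Phi_j^{2\pi}=\id$; (d) the $M_k$ are commuting idempotents, so $M\,M_1\cdots M_r=M$ for every $r$.

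Now split $\chi_j(f)=\sum_{l\ge1}\chi_j(f_l)$. If $l>j$, then $M_1\cdots M_{l-1}$ contains the factor $M_j$, so $f_l=M_j(\,\cdot\,)$ and $\chi_j(f_l)=0$ by (a)--(b). If $l\le j$, then $\chi_j$ commutes with $M_1,\dots,M_{l-1}$ by (a) (and, when $l<j$, also with $L_l$), so $\chi_j(f_l)=M_1\cdots M_{l-1}L_l(\chi_jh_l)$ for $l<j$ and $\chi_j(f_j)=M_1\cdots M_{j-1}\chi_j(L_jh_j)$; in the first case I invoke the compatibility relation \eqref{2.7} to replace $\chi_jh_l$ by $\chi_lh_j$, and then (c) gives, uniformly for $1\le l\le j$,
\[
\chi_j(f_l)=M_1\cdots M_{l-1}\bigl(h_j-M_lh_j\bigr).
\]
Summing over $l=1,\dots,j$ the terms telescope and yield $\chi_j(f)=h_j-M_1\cdots M_jh_j$.

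It remains to prove that the remainder $g:=M_1\cdots M_jh_j$ vanishes identically, and I expect this to be the main obstacle, since it is exactly where the two compatibility conditions \eqref{2.7} and \eqref{2.8} must be used together. I would show $\chi_kg=0$ for every $k$: for $k\le j$ this follows by commuting $\chi_k$ inward to the factor $M_k$ and using (b); for $k>j$, commute $\chi_k$ past $M_1,\dots,M_j$ by (a), rewrite $\chi_kh_j=\chi_jh_k$ via \eqref{2.7}, and then the surviving factor $M_j$ kills it by (b). Since the rotations $\Phi_k^t$ are isometries of $h^m$ (hence preserve the domain of the germ), $\chi_kg\equiv0$ for all $k$ forces $g$ to be invariant under the whole torus action, so $g=Mg=M\,M_1\cdots M_jh_j=Mh_j=0$ by (d) and \eqref{2.8}. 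Therefore $\chi_j(f)=h_j$ for every $j$, i.e. $f$ solves \eqref{2.4}. The remaining work — the commutator bookkeeping and the verification of (b) and (c) — is routine, and the convergence hypotheses are needed only to license the termwise differentiation.
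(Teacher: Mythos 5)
Your proof is correct and follows essentially the same route as the paper: termwise differentiation, the identities $\chi_jM_j=0$ and $\chi_jL_j=\mathrm{id}-M_j$, the compatibility relation \eqref{2.7} to swap $\chi_jh_l$ for $\chi_lh_j$, and a telescoping sum. The only (immaterial) difference is that the paper first proves $M_jh_j=0$ and uses it inside the $l=j$ term, whereas you carry the remainder $M_1\cdots M_jh_j$ through the telescoping and kill it at the end by the same angle-independence argument combined with \eqref{2.8}.
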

\begin{proof}
For $v=(v_1,v_2,\dots)\in h^m$ and
 $j=1,2,\dots$ let us denote by $\varphi_j$ the argument of $v_j\in\R^2$.
Then
$\chi_j=\frac{\p}{\p\varphi_j}$.
Clearly,
$\frac{\p}{\p\varphi_j}M_jh_j=0$. By \eqref{2.7},
for $k\neq j$ we have
$$\frac{\p}{\p\varphi_k}M_jh_j=M_j\frac{\p}{\p\varphi_k}h_j
=M_j\frac{\p}{\p\varphi_j}h_k=0.$$
So $M_jh_j$ is angle-independent and $M_jh_j=Mh_j=0$
by  \eqref{2.8}

 For any $C^1$-function
$g$ we have
$\frac{\p}{\p\varphi_j}L_jg= L_j\frac{\p}{\p\varphi_j}g=g-M_jg$.
Therefore
$$\frac{\p}{\p\varphi_j}f_k=
\begin{cases}
0,&\quad j<k,\\
M_1\dots M_{k-1}h_k,&\quad j=k,\\
M_1\dots M_{k-1}h_j-M_1\dots M_{k}h_j,&\quad j>k,
\end{cases}
$$
(for $k=1$ we define $M_1\dots M_{k-1}h_j=h_j$).
So $\frac{\p}{\p\varphi_j}\sum f_k=h_j$.
\end{proof}
\par Since $\a_\tri=W(v)dv$, then
$$
h_j=(\alpha_\Delta,\chi_j)=iv_j\cdot W_j(v).
$$
The estimates on $W(v)$ easily imply
that the series for $f$ and $\chi_kf$,
$k\geq 1$, converge.
So $f$ is a solution of \eqref{2.4}.
Let us consider its differential
$\ df=\nabla_vf(v)dv.$
Here $\nabla_vf(v)=(\xi_1,\xi_2,\dots)$,
where
$\xi_j=\frac{\p f}{\p v_j^+}+i\frac{\p f}{\p v_j^-}$
with $v_j=v_j^++iv_j^-$.

\begin{lemma}\label{l2.3}
The germ
$v\mapsto Y(v)=\nabla_vf(v)$, $h^m\rightarrow h^{m+\kappa}$,
is n.a. and $Y(v)=O(v)$.
\end{lemma}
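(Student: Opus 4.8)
The plan is to differentiate Moser's series $f=\sum_{l\ge1}f_l$, with $f_l=M_1\cdots M_{l-1}L_lh_l$, term by term, tracking how $\nabla_v$ interacts with the angular averagings. First I record the structure of a single summand. Since $\a_\Delta=W(v)\,dv$ with $W\in\frak A_{m,\kappa}$ (so $W=O(v^2)$), we have $h_j(v)=(\a_\Delta,\chi_j)(v)=\langle W(v),\chi_j(v)\rangle$ with $\chi_j(v)=iv_j1_j$, and differentiating in both arguments,
\begin{equation*}
\nabla_vh_j(v)=dW(v)^*\bigl(iv_j1_j\bigr)-iW_j(v)1_j .
\end{equation*}
By \eqref{1.5} the Cauchy majorant of the first term is $\le\Phi_W(|v|)\bigl(|v_j|1_j\bigr)$ and that of the second is $\le\underline{W}_j(|v|)1_j$; as $W=O(v^2)$ and $dW(v)^*=O(v)$, both terms are $O(v^2)$. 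So $v\mapsto\nabla_vh_j(v)$ is a n.a. germ $h^m\to h^{m+\kappa}$, equal to $O(v^2)$.

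Next I push $\nabla_v$ through the averagings. Each $\Phi_j^t$ is orthogonal with $(\Phi_j^t)^*=\Phi_j^{-t}$, an isometry of every space of the scale, so for a scalar germ $g$ one has $\nabla_v(g\circ\Phi_j^t)(v)=\Phi_j^{-t}\nabla_vg(\Phi_j^tv)$, whence $\nabla_v(M_jg)=\widetilde M_j\nabla_vg$ and $\nabla_v(L_jg)=\widetilde L_j\nabla_vg$ for the ``twisted averagings''
\begin{equation*}
\widetilde M_jG(v)=\frac1{2\pi}\int_0^{2\pi}\Phi_j^{-t}G(\Phi_j^tv)\,dt,\qquad
\widetilde L_jG(v)=\frac1{2\pi}\int_0^{2\pi}t\,\Phi_j^{-t}G(\Phi_j^tv)\,dt .
\end{equation*}
These commute with one another and send a n.a. germ $h^m\to h^{m+\kappa}$ to one of the same class: in the monomial expansion $\widetilde M_j$ retains a subset of the coefficients with unchanged modulus and $\widetilde L_j$ the same subset with modulus rescaled by a factor of modulus $\le\pi$, so that $\underline{\widetilde M_jG}(|v|)\le\underline G(|v|)$ and $\underline{\widetilde L_jG}(|v|)\le\pi\,\underline G(|v|)$ componentwise, while isometry invariance keeps the image in $h^{m+\kappa}$. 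Hence $\nabla_vf_l=\widetilde M_1\cdots\widetilde M_{l-1}\widetilde L_l\,\nabla_vh_l$ is again a n.a. germ $h^m\to h^{m+\kappa}$, equal to $O(v^2)$, with $\underline{\nabla_vf_l}(|v|)\le\pi\,\underline{\nabla_vh_l}(|v|)$.

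Finally I sum over $l$. The same bounds on $W$ that make $\sum_lf_l$ and $\sum_l\chi_kf_l$ converge, together with the last inequality, show that $\sum_l\nabla_vf_l$ converges uniformly on a ball of $h^m$, so $Y=\nabla_vf=\sum_l\nabla_vf_l$. Summing the majorant estimates and using $\sum_l|v_l|1_l=|v|$ and the linearity of $\Phi_W(|v|)$,
\begin{equation*}
\underline Y(|v|)\le\pi\sum_{l\ge1}\underline{\nabla_vh_l}(|v|)\le\pi\bigl(\Phi_W(|v|)|v|+\underline W(|v|)\bigr).
\end{equation*}
The right-hand side is an analytic germ $h_R^m\to h_R^{m+\kappa}$: the term $\underline W(|v|)$ because $W$ is n.a., and $\Phi_W(|v|)|v|$ because $\Phi_W$ is an analytic operator-valued germ, so its $N$-homogeneous part obeys $\|\Phi_{W,N}(|v|)|v|\|_{m+\kappa}\le CR^N\|v\|_m^{N+1}$. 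Thus $\underline Y$ is dominated by an analytic germ with non-negative coefficients, so $Y$ is n.a. $h^m\to h^{m+\kappa}$; and since every $\nabla_vf_l=O(v^2)$, also $Y=O(v^2)$, in particular $Y=O(v)$. I expect the main obstacle to be the second step: checking carefully that the twisted averagings stay inside the class of n.a. germs $h^m\to h^{m+\kappa}$ and that $\nabla_v$ may legitimately be interchanged with the angular integrals and with the infinite Moser sum — although all the growth control that this interchange needs is already contained in the displayed bound for $\underline Y$.
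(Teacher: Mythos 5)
Your proposal is correct and follows essentially the same route as the paper: both split $\nabla_v h_j=dW(v)^*(iv_j1_j)-iW_j(v)1_j$, control the first piece by $\Phi_W(|v|)|v|$ via \eqref{1.5} and the second by $\underline W(|v|)$, and push the gradient through the angular averagings as conjugated ("twisted") averagings that do not increase the Cauchy majorant (the paper does this by bounding the integrands ${\cal Y}^\theta_{ij}$ and ${\cal Z}^\theta_j$ uniformly in $\theta$, which is your monomial-selection observation in integral form). The only cosmetic difference is that you package the commutation $\nabla_v M_j=\widetilde M_j\nabla_v$ as an explicit operator identity rather than computing componentwise, and you record the slightly stronger conclusion $Y=O(v^2)$.
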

\begin{proof}
Noting that $iv_j\cdot W_j=\Phi_j^{\pi/2}v_j\cdot W_j$, we
 have $\nabla_{v_i}f=\sum_j\nabla_{v_i}f_j$,
where
$$\nabla_{v_i}f_j=\int\limits_{\T^j}\theta_j\nabla_{v_i}
\bigg(W_j(\Pi^{\theta^j}v)\cdot
(\Phi_j^{\theta_j+\frac{\pi}{2}}v_j)\bigg)d\theta^j$$
$$=\int\limits_{\T^j}\theta_j(\Phi_i^{-\theta_i}\nabla_{v_i})
W_j(\Pi^{\theta^j}v)\cdot(\Phi_j^{\theta_j+\frac{\pi}{2}}v_j)d\theta^j+
\delta_{ij}\int\limits_{\T^j}\theta_j
\big(\Phi_j^{-\theta_j-\frac{\pi}{2}}W_j(\Pi^{\theta^j}v)\big)=:
$$
$$=:Y_{ij}+\delta_{ij}Z_j.
$$
Here
$$
\theta^j=(\theta_1,\dots,\theta_j)\in \T^j,\quad
\Pi^{\theta^j}=\Phi_1^{\theta_1}\circ\dots \circ\Phi_j^{\theta_j},\quad
d\theta^j=\frac{d\theta_1}{2\pi}\dots \frac{d\theta_j}{2\pi}\,.
$$
 Denote by ${\cal Z}_j^\theta(v)$ the integrand for $Z_j$.
The germ $v\mapsto {\cal Z}^\theta(v)$, $ h^m\rightarrow h^{m+\kappa}$,
is analytic and
$\underline{{\cal Z}^\theta}(|v|)=(\diag\,\theta_j)\underline W(|v|)$.
Hence, the germ ${\cal Z}^\theta$ is n.a.
for each $\theta$. So the germ
$v\mapsto Z(v)$ also is.
\par Denote by ${\cal Y}^\theta_{ij}(v)$ the integrand in $Y_{ij}$.
We have
$${\cal Y}^\theta_{ij}(v)=\theta_j
\Phi_i^{-\theta_i}\big(dW(\Pi^{\theta^j}(v))^*\big)_{ij}
\Phi_j^{\theta_j+\frac{\pi}{2}}v_j.$$
Using \eqref{1.5} we see that
$\big|\sum\limits_j{\cal Y}^\theta_{ij}(|v|)\big|\leq2\pi\,
\sum\limits_j(\Phi_W)_{ij}(|v|)|v_j|,$
uniformly in $\theta$.
That is, for any $\theta$ the germ
$h^m\ni v\mapsto \sum\limits_j{\cal Y}^\theta_{ij}(v)\in h^{m+\kappa}$
is n.a.
So the map $h^m\ni u\mapsto Y(u)=(Y^1, Y^2,\dots)\in h^{m+\kappa}$
 is n.a. It is obvious that $Y(u)=O(u)$.
\end{proof}
\par For $0\leq\tau\leq 1$ we set
$ \bar J^\tau(v)=(1-\tau)i+\tau\bar J_1(v)$
and
$J^\tau(v)=-\big(\bar J^\tau(v))^{-1}$.
These are well defined operators in $ {\cal L}(h^m,h^{m})$
antisymmetric with respect to the $h^0$-scalar product.
Clearly, $J^\tau(v)=i+\Upsilon^\tau(v)$, where
$\Upsilon^\tau(v)$ belongs to ${\cal L}(h^m,h^{m+\kappa})$.
Denote by $\om^\tau$ the form
$(1-\tau)\om_0+\tau\om_1=\bar J^\tau(v)dv\wedge dv$.

 We define
$V^\tau(v)=J^\tau(v)(W(v)-Y(v))$,
consider the equation
\begin{equation}\label{flow}
\dot v(\tau)=V^\tau(v(\tau)),\quad 0\leq\tau\leq1,
\end{equation}
and denote by $\varphi^\tau$, $0\leq\tau\leq1$, its flow-maps.
Since $V^\tau(v)=O(v)$ and $V^\tau:h^m\mapsto h^{m+\kappa}$ is n.a.
(cf. Lemma~\ref{l1.5} and its proof), then $(\varphi^\tau-id)=O(v)$ and
$(\varphi^\tau-id):h^m\mapsto h^{m+\kappa}$ is n.a.
 Also
$$
\frac{d}{d\tau}(\varphi^\tau)^*\omega^\tau=
(\varphi^\tau)^*d(\alpha_\Delta+V^\tau\rfloor\omega^\tau)=
(\varphi^\tau)^*d\big(Y(v)dv\big)=0
$$
since $Y(v)dv=df$. So  $(\varphi^\tau)^*\om^\tau=const$ and
 $\ (\varphi^0)^*\om^0=\om_0=(\varphi^1)^*\om^1.$
That is, the n.a. germ
$$
\Psi^+=(\varphi^1)^{-1}\circ\Psi: h^m\rightarrow h^m
$$
is such that $d\Psi^+(0)=id$  and $\Psi^{+^*}\om_0=\om_0$.
The germ  $(\varphi^1)^{-1}-\,$id$\,:h^m\mapsto h^{m+\kappa}$ is
n.a., so $\Psi^+_0=\Psi^+-\,$id$\,:h^m\mapsto h^{m+\kappa}$ is n.a.
as well.

Now we show that
$\Psi^+_0\in\A_{m,\kappa}$.
Since $\Psi^+$ is symplectic,
then\\
$\ d\Psi^+(u)^*\, i\big(d\Psi^+(u)\big)=i.$
Hence,
$$d\Psi_0^+(u)^*= id\Psi_0^+(u)  (1+d\Psi_0^+(u))^{-1}i,$$
and   Lemma \ref{l1.3} implies that
$\underline{d\Psi_0^+(|u|)^*|v|}$ has the required form \eqref{1.5}.

It remains to check for $\Psi^+$ properties ii) and a).
Let $v$ be a vector such that $v_j\ne0$ for all
$j$. Since $\a_\tri=W(v)dv$, then for each $j$
we have
\begin{equation}\label{2.10}
\begin{aligned}
\om^\tau(V^\tau, iv_j1_j)=&\left<\bar J^\tau(v)V^\tau(v), iv_j1_j\right>=\\
=\left<W(v)-Y(v), iv_j1_j\right>&=(\a_\tri,\chi_j)-
(df,\chi_j)=0.
\end{aligned}
\end{equation}
 By \eqref{2.100} and a similar relation for
the form $\om_0$,
\begin{equation}\label{2.11}
\om^\tau(\xi_1,\xi_2)=0
\quad \forall\,\xi_1,\,\xi_2\in i\Sigma_v.
\end{equation}
Here as before $i\Sigma_v=\linspan\{iv_j1_j\}$.
Denote
$$\big(i\Sigma_v\big)^\perp=\{\xi\in h^0\,\,\big|\, \om^\tau(\xi,\eta)=0
\,\,\forall\eta\in i\Sigma_v\}.$$
By \eqref{2.11},  $i\Sigma_v \subset \big(i\Sigma_v\big)^\perp$.
We claim that
\begin{equation}\label{2.12}
i\Sigma_v = \big(i\Sigma_v\big)^\perp
\end{equation}
(i.e., $i\Sigma_v$ is a Lagrangian subspace for the symplectic form $\omega^\tau$).
Indeed, if this is not the case,
then we can find a vector $\xi\in\Sigma_v$,
$\|\xi\|_0=1$,
such that $\xi\in \big(i\Sigma_v\big)^\perp$.
In particular,
$\om^\tau(\xi,i\xi)=0$.
But $\om^\tau(0)(\xi,i\xi)=\om_0(\xi,i\xi)=1$.
So for small $v$ we have
$\om^\tau(\xi,i\xi)>0$. Contradiction.
\par
By \eqref{2.10}, \eqref{2.12},
$V^\tau(v)\in  \big(i\Sigma_v\big)^\perp=i\Sigma_v$.
So  solutions $v(\tau)$  of \eqref{flow} satisfy
$$
\frac{1}{2}\frac{d}{d\tau}|v_j(\tau)|^2=
\left<V^\tau(v), v_j1_j\right>=0,$$
and $I_j(\varphi^\tau(v))\equiv I_j(v)$ for each $j$. By continuity
 this relation holds
for all vectors $v$ (without assuming that $v_j\ne0\ \forall\,j$). Hence,
$I_j\circ\Psi\equiv I_j\circ\Psi$ for each $j$.
This proves ii) and  a) for the germ $\Psi^+$.

\section{Proof of Theorem \ref{t3.1}} \label{s5}
\par The construction of $\Psi$ that we present below
follows  the ideas of \cite{Kap91} (also see \cite{K2}, pp.~42-44).
It relies on the spectral theory of the corresponding Lax operator
$L_{u}=-\p^2_x-u.$
\par It will be convenient for us to allow for complex-valued potentials $u$:
$$u\in L_2^0(S^1,\C)=\{u\in L_2(S^1,\C),\quad \int_0^{2\pi}u dx=0\}.
$$
We write $u(x)$ as Fourier series
$u(x)=\frac{1}{2\sqrt{\pi}}\sum_{j\in \Z_0}
e^{ijx}w_j$,
where $\Z_0=\Z\smallsetminus\{0\}$, and denote
\begin{equation*}
w=(w_j\in \C,\,\, j\in \Z_0)={\cal F}(u),\quad u={\cal F}^{-1}(w).
\end{equation*}
Clearly,
${\cal F}:  L_2^0(S^1,\C)\rightarrow h^0=h^0(\Z_0)$ is an isomorphism.
Here and below
 we use the notations
$$h^m=h^m(\Z_0)=
\{w: \|w\|_m^2=\sum\limits_{j\in \Z_0}|j|^{2m}|w_j|^2<\infty\}.$$
A sequence $w$ called {\it real} if
${\cal F}^{-1}(w)$ is a real-valued function.
That is, if $w_j=\overline{ w_{-j}}$ for each $j$.

\par We view $L_u$ as an operator on $L_2(\R/4\pi\Z)$ with the domain
${\cal D}(L_u)=H^2(\R/4\pi\Z)$. The spectrum of $L_u$ is discrete and
for $u$ real is of the form
$$\sigma(L_u)=\{\lambda_k(u), \,\,k\geq 0\},
\quad \lambda_0<\la_1\leq\la_2<\dots,$$
where
$\la_k(u)\rightarrow \infty$ as $k\rightarrow \infty$.
For $u$ small $\s(L_u)$ is $\|u\|_{L_2}$- close
to the spectrum of $L_0=-\p^2_x$, that is, to the set
$\s(L_0)=\{j^2/4,\, \, j\geq 0\}$. More precisely, one has
$$|\la_{2j-1}-j^2/4|,\,  |\la_{2j}-j^2/4|\leq C\|u\|_{L_2},
\quad j\geq 1,$$
provided $\|u\|_{L_2}\leq \delta$, where $\delta>0$ sufficiently small.

\par For $j\geq 1$   we will denote by $E_j(u)$ the invariant two-dimensional
subspace of $L_u$, corresponding to the eigenvalues
$\la_{2j-1}(u)$, $\la_{2j}(u)$, and by $P_j(u)$
the spectral projection on $E_j(u)$:
\begin{equation*}
P_j(u)=-\frac{1}{2\pi i}\oint\limits_{\gamma_j}(L_u-\la)^{-1}d\la,
\quad \Im P_j(u)=E_j(u),\quad j\geq 1.
\end{equation*}
Here $\gamma_j$ is a contour in the complex plane which isolates
$\la_{2j-1}$ and $\la_{2j}$ from other eigenvalues of $L_u$.
For the computations that will be performed below we fix
the contours  as
$\gamma_j=\{\la\in\C,\,\, |\la-j^2/4|=\delta_0 j\}$,
$\delta_0>0$ small.

\par Clearly, $u\mapsto P_j(u)$, $j\geq 1$, are analytic\footnote{In this section
``analytic'' means complex analytic.}
maps from
$V_\delta=\{u\in L_2^0(S^1,\C),$ $\|u\|_{L_2}\leq \delta\}$ to
${\cal L}(L_2,H^2)$, $L_2=L_2(\R/4\pi\Z)$, $H^2=H^2(\R/4\pi\Z)$,
provided $\delta$ is sufficiently small.
Furthermore, it is not difficult to check that
\begin{equation}\label{4.2}
\begin{split}
\|P_j(u)-P_{j0}\|_{L_2\rightarrow L_2}   \leq Cj^{-1}\|u\|_{L_2},\quad
\|P_j(u)-P_{j0}\|_{L_2\rightarrow H^2}   \leq Cj\|u\|_{L_2},
\end{split}
\end{equation}
for $j\geq 1$ and $u\in V_\delta$. Here $P_{j0}$ is the spectral projection
of $L_0$,  corresponding to a double eigenvalue $j^2/4$:
$$\Im P_{j0}=E_{j0},\quad \Ker P_{j0}=E_{j0}^\perp,\quad
E_{j0}=\linspan\{\cos{jx/2},\, \sin{jx/2}\},\, j\geq 1.
$$
\par Following \cite{Ka}, see also \cite{Kap91},
we introduce the transformation operators
$U_j(u)$, $j\geq 1$:
\begin{equation*}\label{4.00}
U_j(u)=\big(I-(P_j(u)-P_{j0})^2\big)^{-1/2}P_j(u).
\end{equation*}
It follows from \eqref{4.2} that the maps
$u\mapsto U_j(u)$ are well defined and analytic on $V_\delta$.
It turns out (see \cite{Ka}) that the image of $U_j(u)$ is $E_j(u)$ and
for $u$ real one has
\begin{align}
\|U_j(u)f\|_2&=\|f\|_2,\quad f\in E_{j0},\label{4.3}\\
\overline{U_j(u)f}&=U_j(u)\bar f.\label{4.4}
\end{align}

\par For $j\in \Z_0$ let us set
\begin{align}
f_j(u)&=U_{|j|}(u)f_{j0}\in E_{|j|}(u),\quad
f_{j0}=\frac{1}{\sqrt{2\pi}}e^{-ijx/2},
\label{4.50}
\\
z_j(u)&=-\sqrt{\pi}\left((L_u-j^2/4)f_j(u),\overline{f_j(u)}\right).
\label{4.5}
\end{align}
Here $(\cdot,\cdot)$ stands for the standard scalar product in
$L_2([0,4\pi],\C)$:
$$
(f,g)=\int f\bar g\, dx.
$$
\begin{lemma}\label{l4.1}
For $u$ real, one has
\begin{itemize}
\item[i)] $\overline{z_j(u)}=z_{-j}(u)$,
\item[ii)] $|z_j(u)|^2=\pi(\la_{2j}(u)-\la_{2j-1}(u))^2,\quad j\geq 1.$
\end{itemize}
\end{lemma}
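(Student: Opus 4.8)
The two assertions are computational identities about the quantities $z_j(u)$ defined in \eqref{4.5}, and the plan is to unwind the definitions and use the basic properties \eqref{4.3}, \eqref{4.4} of the transformation operators $U_j(u)$. For part i), I would start from the defining formula $z_j(u)=-\sqrt\pi\,((L_u-j^2/4)f_j(u),\overline{f_j(u)})$ and take complex conjugates. Since $u$ is real, $L_u$ has real coefficients, so $\overline{(L_u-j^2/4)g}=(L_u-j^2/4)\bar g$ for any $g$; hence $\overline{z_j(u)}=-\sqrt\pi\,(\,\overline{f_j(u)}, (L_u-j^2/4)f_j(u))=-\sqrt\pi\,((L_u-j^2/4)\overline{f_j(u)}, f_j(u))$ after using self-adjointness of $L_u$ on $L_2(\R/4\pi\Z)$. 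Now the point is that $\overline{f_{j0}}=f_{-j,0}$, so by \eqref{4.4} we have $\overline{f_j(u)}=\overline{U_{|j|}(u)f_{j0}}=U_{|j|}(u)\overline{f_{j0}}=U_{|j|}(u)f_{-j,0}=f_{-j}(u)$ (here $|j|=|-j|$). Substituting, $\overline{z_j(u)}=-\sqrt\pi\,((L_u-j^2/4)f_{-j}(u), \overline{f_{-j}(u)})=z_{-j}(u)$, since $j^2/4=(-j)^2/4$. This is routine once the conjugation symmetry of $f_j$ is in place.

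For part ii), fix $j\ge1$ and work inside the two-dimensional invariant subspace $E_j(u)=\Im U_j(u)$, on which $L_u$ acts with eigenvalues $\la_{2j-1}(u)\le\la_{2j}(u)$. Write $A=(L_u-j^2/4)|_{E_j(u)}$, a self-adjoint operator on $E_j(u)$ with eigenvalues $\mu_\pm:=\la_{2j}-j^2/4$ and $\la_{2j-1}-j^2/4$. Set $g=f_j(u)\in E_j(u)$ and $\bar g=\overline{f_j(u)}=f_{-j}(u)\in E_{|{-j}|}(u)=E_j(u)$, so $\bar g$ also lies in $E_j(u)$. Then $z_j(u)=-\sqrt\pi\,(Ag,\bar g)$ and, taking the modulus squared, $|z_j(u)|^2=\pi\,|(Ag,\bar g)|^2$. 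The strategy is to evaluate $(Ag,\bar g)$ in an orthonormal eigenbasis $\{e_+,e_-\}$ of $A$ in $E_j(u)$, where by \eqref{4.3} the vectors $f_{j0},\overline{f_{j0}}$ map under $U_j(u)$ to unit vectors; writing $g=a e_+ + b e_-$ one gets $\bar g = \bar a\, \overline{e_+}+\bar b\,\overline{e_-}$, and because $\overline{e_\pm}$ are again unit eigenvectors of $A$ (as $L_u$ is real), $(Ag,\bar g)=\mu_+ a\cdot(\overline{e_+},\text{conj terms})+\cdots$. A short computation using $\|g\|_2=\|f_{j0}\|_2=1/\sqrt{2}$ (or the appropriate normalization from \eqref{4.3}) and the orthogonality relations collapses this to $(Ag,\bar g)=\tfrac12(\mu_+-\mu_-)\cdot(\text{a phase})$, whence $|z_j(u)|^2=\pi(\la_{2j}-\la_{2j-1})^2$ after checking the constant.

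The one genuine subtlety — the step I would be most careful about — is the precise bookkeeping of normalization constants and of the bilinear (as opposed to sesquilinear) pairing $g\mapsto(\,\cdot\,,\bar g)$ when $A$ has a repeated eigenvalue or when the spectral gap degenerates; one must verify that the formula $|z_j|^2=\pi(\la_{2j}-\la_{2j-1})^2$ holds including in the limiting case $\la_{2j-1}=\la_{2j}$, where both sides vanish. I would handle this by first treating the nondegenerate case with the eigenbasis computation above, then invoking analyticity of $u\mapsto z_j(u)$ and of the eigenvalues (away from crossings, with continuity across them) to extend the identity to all real $u\in V_\delta$ by density, exactly as the paper later does for analogous continuity arguments. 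Everything else is a direct substitution into the definitions \eqref{4.50}--\eqref{4.5} together with \eqref{4.3}--\eqref{4.4} and the self-adjointness and reality of $L_u$.
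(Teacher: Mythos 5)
Your plan is correct and is essentially the paper's argument: part i) is exactly the conjugation symmetry $\overline{f_j(u)}=f_{-j}(u)$ from \eqref{4.4}, and part ii) is the same $2\times2$ computation on $E_j(u)$ using that $\Re f_j,\Im f_j$ form a real orthonormal basis — the paper just performs it in that basis via the deviator of the matrix of $-\sqrt\pi(L_u-j^2/4)\big|_{E_j(u)}$, which avoids your eigenbasis and hence the degenerate-gap caveat. Only the bookkeeping you already flagged needs correction: on $L_2(\R/4\pi\Z)$ one has $\|f_{j0}\|_2=\sqrt2$ (not $1/\sqrt2$), and writing $f_j=e^{i\theta}(e_++ie_-)$ gives $(Af_j,\overline{f_j})=e^{2i\theta}(\mu_+-\mu_-)$ with no factor $\tfrac12$, which yields exactly $|z_j|^2=\pi(\la_{2j}-\la_{2j-1})^2$.
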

\begin{proof}
Assertion i) is obvious (see \eqref{4.4}). To check ii),
consider
$$
\begin{aligned}
e_j&=\Re f_j=U_j(u)e_{j0},\quad e_{j0}=\frac{1}{\sqrt{2\pi}}\cos{jx/2},\\
e_{-j}&=\Im f_j=U_j(u)e_{-j0},\quad
e_{-j0}=-\frac{1}{\sqrt{2\pi}}\sin{jx/2},\quad j\geq 1.
\end{aligned}
$$
It follows from \eqref{4.3},  \eqref{4.4} that
the vectors $e_j$, $e_{-j}$ form a real orthonormal basis of $E_j(u)$.
Let $M_j(u)$  be the matrix of the self-adjoint operator
$-\sqrt{\pi}(L_u-j^2/4)\big|_{E_j(u)}$ in this basis:
$$M_j(u)=\left(\begin{array}{cr}
a_j^1&b_j\\
b_j&a_j^2
\end{array}
\right),
$$
$$ a_j^1=-\sqrt{\pi}
\left((L_u-\frac{j^2}{4})e_j(u),{e_j(u)}\right),\;\;
a_j^2=-\sqrt{\pi}
\left((L_u-\frac{j^2}{4})e_{-j}(u),{e_{-j}(u)}\right),$$
$$
b_j=-\sqrt{\pi}
\left((L_u-j^2/4)e_j(u),{e_{-j}(u)}\right)=\frac{1}{2}\Im z_j(u).$$
Consider the deviators $M_j^D,\,\, j\geq 1$,
(for a $2\times 2$ matrix M its deviator is the traceless matrix
$M-(\frac{1}{2}\tr M)I$):
$$M_j^D(u)=\left(\begin{array}{cr}
a_j&b_j\\
b_j&-a_j
\end{array}
\right),\quad a_j=\frac{1}{2}(a_j^1-a_j^2)=\frac{1}{2}\Re z_j(u).$$
By construction, one has
$|z_j(u)|^2=4(a_j^2+b_j^2)=\pi(\la_{2j}(u)-\la_{2j-1}(u))^2$.
\end{proof}
Functions $z_j(u)$, $j\in\Z_0$, are analytic functions of $u\in V_\delta$,
vanishing at zero.
They can be represented by absolutely and uniformly
converging Taylor series that we will write in terms of the
Fourier coefficients $w={\cal F}(u)$.
\begin{equation}\label{4.7}
z_j(u)=\sum\limits_{n=1}^\infty Z_n^j(w),
\end{equation}
where $Z_n^j(w)$ are bounded $n$- homogeneous functionals on $h^0(\Z_0)$:
\begin{equation}\label{4.9}
Z_n^j(w)=\sum\limits_{{i}=(i_1,i_2,\dots, i_n)\in \Z_0^n}
{\cal K}_n^j({i})w_{i_1}w_{i_2}\dots w_{i_n},
\end{equation}
${\cal K}_n^j(\cdot)$ being a symmetric function on $\Z_0^n$.

 Notice that
$$P_j(u)=P_{j0}+(L_0-j^2/4)^{-1}(I-P_{j0})uP_{j0}+
P_{j0}u(L_0-j^2/4)^{-1}(I-P_{j0})+O(u^2),$$
$$U_j(u)=P_j(u)+O(u^2).$$
As a consequence,
\begin{equation}\label{4.10}
f_j(u)=f_{j0}+(L_0-j^2/4)^{-1}(I-P_{|j|0})uf_{j0}+O(u^2),\quad j\in \Z_0.
\end{equation}
Substituting \eqref{4.10} into \eqref{4.5}, one gets
$$z_j(u)=\sqrt{\pi}(uf_{j0},\bar f_{j0})+
\sqrt{\pi}(u(L_0-j^2/4)^{-1}(I-P_{|j|0}) uf_{j0},\bar f_{j0})+O(u^3),$$
which gives that $Z_1(w)=w$ and
\begin{equation}\label{4.12}
Z_2^j(w)=\frac{1}{2\sqrt{\pi}}\sum\limits_{k\in\Z_0,\,k\neq j}
\frac{w_kw_{j-k}}{k(k-j)}
\end{equation}
In a similar way, one can show that
\begin{equation}\label{4.100}
Z_3^j(w)=\frac{1}{4{\pi}}
\sum\limits_{{i_1,i_2\in \Z_0\atop i_1\neq j, i_1+i_2\neq 0,j}}
\frac{w_{i_1}w_{i_2}w_{j-i_1-i_2}}{i_1(i_1-j)(i_1+i_2)(i_1+i_2-j)}
-\frac{w_j}{4{\pi}}\sum\limits_{l\in\Z_0,\,l\neq j}
\frac{w_lw_{-l}}{l^2(l-j)^2}.
\end{equation}
\par The structure of higher order terms
$Z_n^j(w)$ is described by the following lemma which is the key
technical step
of our analysis.
\begin{lemma}\label{l4.2}
One has
\begin{itemize}
\item[(i)] $\supp{\cal K}_n^j(\cdot)\subset \Omega_j^{(n)}$,
where $ \Omega_j^{(n)}$ is the simplex
$$\Omega_j^{(n)}=\{ i=(i_1,\dots, i_n)\in \Z_0^n,\,\,
\sum\limits_{l=1}^ni_l=j\};$$
\item[(ii)] for $n\geq 2$, \
$\|{\cal K}_n^j\|_{l^2(\Z_0^n)}\leq {R^n}{|j|^{1-n}};$
\item[(iii)] for $n\geq 3$,
$\|{\cal B}_n^j\|_{l^2(\Z_0^n)}\leq {R^n}{|j|^{-2}},$
where
${\cal B}_n^j(i_1,\dots, i_n)={\cal K}_n^{i_1}(j,i_2,i_3,\dots,i_n).$
\end{itemize}
Here $R$ is a positive constant, independent of $j$ and $n$.
\end{lemma}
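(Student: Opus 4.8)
The plan is to extract the asymptotics of the Taylor kernels $\mathcal{K}_n^j$ from the resolvent expansion of $P_j(u)$, exploiting the explicit contour $\gamma_j=\{|\la-j^2/4|=\delta_0 j\}$ and the structure of the free resolvent $(L_0-\la)^{-1}$. First I would write out the Neumann series for $(L_u-\la)^{-1}=(L_0-\la)^{-1}\sum_{k\ge0}\big(u(L_0-\la)^{-1}\big)^k$, plug this into the Cauchy integral defining $P_j(u)$, and then into $U_j(u)=\big(I-(P_j-P_{j0})^2\big)^{-1/2}P_j(u)$ and finally into the definition \eqref{4.5} of $z_j(u)$. Each $n$-homogeneous term $Z_n^j(w)$ is then a sum of contour integrals of products of $n$ factors $u$ interleaved with free resolvents, evaluated against $f_{j0}$ and $\bar f_{j0}$. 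Since $u(x)$ expands in $e^{i i_l x}$ with Fourier modes $w_{i_l}$, and $(L_0-\la)^{-1}$ is diagonal in the Fourier basis with eigenvalue $(k^2/4-\la)^{-1}$ on $e^{ikx/2}$, the phase bookkeeping forces the total frequency shift to vanish modulo the fixed shift coming from $f_{j0},\bar f_{j0}$; this gives assertion (i), namely $\mathrm{supp}\,\mathcal{K}_n^j\subset\Omega_j^{(n)}$.

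For the $l^2$ bounds (ii) and (iii), the key observation is that on the contour $\gamma_j$ one has $|\la|\sim j^2$, while the ``dangerous'' intermediate frequencies $k$ with $k^2/4$ close to $j^2/4$ are exactly those isolated by $\gamma_j$; the resolvent factors $(k^2/4-\la)^{-1}$ are therefore $O(j^{-1})$ in the resonant regime (distance $\delta_0 j$ from the pole) and summable off it. Carrying out the contour integration — residues plus the estimate of the remaining integrand — each of the $n$ resolvent factors contributes a decay, and after summing a geometric-type series in the non-resonant modes one is left with a kernel whose $l^2(\Z_0^n)$-norm is controlled by $R^n|j|^{1-n}$: heuristically, $n-1$ genuine resolvents each give a factor $|j|^{-1}$, and $R^n$ absorbs combinatorial constants, the contour length $\sim j$, and the normalisation of $f_{j0}$. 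Assertion (iii) concerns the ``transposed'' kernel $\mathcal{B}_n^j(i_1,\dots,i_n)=\mathcal{K}_n^{i_1}(j,i_2,\dots,i_n)$; here the roles of the external index and the first summation variable are interchanged, so one should re-read the same expansion fixing $j$ as an input frequency and $i_1$ as the output eigenvalue label, and check that the worst-case decay improves to $|j|^{-2}$ for $n\ge3$ — this reflects that with two ``free'' slots one gains an extra resolvent factor. The explicit low-order formulas \eqref{4.12} and \eqref{4.100} are the $n=2,3$ templates to keep in mind and to check the bounds against (e.g. $Z_2^j$ has kernel $\sim (k(k-j))^{-1}$, whose $l^2$-norm in $k$ is $O(|j|^{-1})$, matching (ii) with $n=2$).

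The main obstacle I expect is the uniformity of the constant $R$ in both $n$ and $j$ simultaneously. Tracking a single power of $|j|$ is routine, but controlling the $n$-fold nested resolvent sums requires showing that each additional factor $u(L_0-\la)^{-1}$ contributes a bounded operator norm on the relevant $l^2$-type space with a constant independent of $j$ and $\la\in\gamma_j$ — essentially a discrete Schur/Young-type estimate applied uniformly along the contour. A clean way to organise this is to introduce, for each fixed $\la\in\gamma_j$, a weighted $l^2$ norm on frequency space adapted to the distance from the poles, prove that $u\cdot(L_0-\la)^{-1}$ is bounded on it with norm $\le R\|w\|_0$, and then integrate over $\gamma_j$ (whose length is $O(j)$, absorbed into $R^n$ after distributing one $|j|^{-1}$ per resolvent). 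One then has to verify that projecting onto the final basis vectors $f_{j0},\bar f_{j0}$ and applying the $\big(I-(P_j-P_{j0})^2\big)^{-1/2}$ correction does not spoil these estimates — this is where \eqref{4.2} is used — and to handle the ``diagonal'' subtraction terms visible in \eqref{4.100} (the second sum with the isolated $w_j$), which arise from second-order perturbation theory and obey the same bounds. Finally, assembling the pieces and comparing with \eqref{4.12}, \eqref{4.100} confirms the constant is uniform, completing the proof.
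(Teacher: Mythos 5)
Your overall strategy is the paper's: expand the resolvent in a Neumann series inside the contour integral for $P_j(u)$, push this through $U_j(u)$ into $z_j(u)$, read off the Fourier kernel of each $n$-homogeneous term (which gives (i) by frequency bookkeeping), and extract decay from the resolvent denominators on $\gamma_j$. The paper organises exactly this as Lemma~\ref{l4.7}. But your write-up has a concrete gap at the quantitative heart of the lemma. The estimates (ii) and (iii) are $l^2(\Z_0^n)$ bounds on the \emph{kernel}, i.e.\ Hilbert--Schmidt--type bounds, and these do not follow from the operator-norm/Schur-test organisation you propose (bounding $u\cdot(L_0-\la)^{-1}$ on a weighted $l^2$ space controls the multilinear maps $Z_n^j$, not the $l^2$ norm of $\mathcal K_n^j$ over all $n$ variables jointly, which is what Lemma~\ref{l4.3} and Lemma~\ref{l4.4} actually consume). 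What is needed, and what the paper proves in \eqref{4.16}, is a \emph{pointwise} bound on (a not necessarily symmetric representative of) the kernel in terms of the partial sums $s_l=\sum_{k\le l}i_k$, namely $|\tilde{\mathcal K}_n^j(i)|\le R^n\prod_{l=1}^{n-1}\bigl(\langle s_l\rangle\langle s_l-j\rangle\bigr)^{-1}$ on the simplex $\Omega_j^{(n)}$; the $l^2$ bounds then follow by changing variables to $s_1,\dots,s_{n-1}$ and using $\sum_s(\langle s\rangle\langle s-j\rangle)^{-2}\le C|j|^{-2}$. Your heuristic ``each resolvent gives $|j|^{-1}$'' happens to give the right exponent for (ii), but only because each one-dimensional sum above contributes $|j|^{-2}$ after squaring; without the partial-sum structure you cannot justify the summation.

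Item (iii) is the more serious omission: you only say one should ``check that the worst-case decay improves to $|j|^{-2}$ for $n\ge3$'', which restates the claim. The actual mechanism is that in $\mathcal B_n^j(i)=\mathcal K_n^{i_1}(j,i_2,\dots,i_n)$ the first partial sum is frozen at $j$, so the first factor of the pointwise bound becomes $(\langle j\rangle\langle j-i_1\rangle)^{-1}$, contributing one power of $|j|^{-1}$ outright; the second power comes from the convolution inequality $\sum_m\langle j-m\rangle^{-2}\langle m\rangle^{-2}\le C\langle j\rangle^{-2}$ applied to the sum over the output index $m=i_1$, and this requires at least one further free internal partial sum, i.e.\ $n\ge3$. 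One must also account for the fact that $\mathcal K_n^j$ in the lemma is the \emph{symmetrised} kernel while the expansion naturally produces a non-symmetric one (hence the $\sup_{1\le l\le n}\|\mathcal B_{n,l}^j\|$ in the paper), and for the $(I-(P_j-P_{j0})^2)^{-1/2}$ correction, which the paper handles by expanding it with nonnegative coefficients $c_q$ and absorbing the resulting double contour integrals into $R^n$. None of these steps is deep, but they are where the content of (ii) and (iii) lives, and your proposal leaves all of them unexecuted.
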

Postponing the proof of this lemma till the end of the section,
we proceed with the construction of the map $\Psi$.
Introduce the map $F$ that associates to $w\in h^0(\Z_0)$ the sequence
$F(w)=(z_j(u), \, j\in \Z_0)$, $u={\cal F}^{-1}(w)$.
Since $Z_1(w)=w$,  we write $F$ as the sum
$$F=\id+F_2,\quad F_2=Z_2+F_3,$$
where
$Z_2(w)=(Z_2^j(w),\, j\in \Z_0)$.
Notice that, by the construction,
$$ dF(0)=id, \quad F_2(w)=O(w^2),\quad F_3(w)=O(w^3).$$
\par As a direct consequence of Lemma \ref{l4.2} (i), (ii) one gets
\begin{lemma}\label{l4.3}
For $j=2,3$ the map
$F_j:{\cal O}_\delta(h^m(\Z_0))\rightarrow h^{m+j-1}(\Z_0)$
is analytic and normally analytic.
\end{lemma}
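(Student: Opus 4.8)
\textbf{Proof plan for Lemma~\ref{l4.3}.} The plan is to read off the two claimed properties — analyticity and normal analyticity of $F_j:\OO_\delta(h^m(\Z_0))\to h^{m+j-1}(\Z_0)$ for $j=2,3$ — directly from the homogeneous expansion $F_j(w)=Z_j(w)+(\text{higher order})$ together with the support and size estimates in Lemma~\ref{l4.2}. Recall that $F=\id+F_2$ with $F_2=Z_2+F_3$, so $F_2$ is the part of $F$ of order $\ge2$ and $F_3$ the part of order $\ge3$; thus proving the statement for $F_2$ and $F_3$ amounts to controlling all the homogeneous pieces $Z_n^j$ with $n\ge2$ (resp.\ $n\ge3$) and checking that, after passing to absolute values of the coefficients, the resulting majorant series still maps $h^m_R(\Z_0)$ analytically into $h^{m+j-1}_R(\Z_0)$.

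First I would fix $n\ge2$ and estimate the $n$-homogeneous map $Z_n=(Z_n^j,\,j\in\Z_0)$ as an operator-valued object. By Lemma~\ref{l4.2}(i), $\mathcal K_n^j$ is supported on the simplex $\Omega_j^{(n)}=\{i:\sum i_l=j\}$, so in $Z_n^j(w)=\sum_{i\in\Omega_j^{(n)}}\mathcal K_n^j(i)w_{i_1}\cdots w_{i_n}$ one index, say the ``output'' index $j$, is the sum of the others. For $v\in h^m_R(\Z_0)$ with $v\ge0$ componentwise I would bound $\underline{Z_n}^j(v)\le\|\mathcal K_n^j\|_{\ell^2(\Z_0^n)}\,\|v^{\otimes n}\|_{\ell^2}$ by Cauchy--Schwarz, but to gain the weight $|j|^{2(m+n-1)}$ on the left I need to distribute the factor $|j|^{m+n-1}$ among the inputs: writing $|j|=|i_1+\dots+i_n|\le\sum|i_l|$ and using that on the simplex each $|j|\le n\max_l|i_l|$, I can absorb $|j|^{n-1}$ into the product $\prod|i_l|$ up to a combinatorial constant $C^n$, and the remaining $|j|^{1-n}$ in Lemma~\ref{l4.2}(ii) exactly compensates the $R^n$-type growth. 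Concretely this yields an estimate of the shape $\|\underline{Z_n}(v)\|_{m+n-1}\le C^n R^n\|v\|_m^n$ for suitable constants, which for $n=j-1$ corresponds to the leading term and for the higher terms is summable in $n$ on a small ball; summing over $n\ge2$ (resp.\ $n\ge3$) gives that $\underline{F_2}$ (resp.\ $\underline{F_3}$) defines a real-analytic germ $h^m_R(\Z_0)\to h^{m+1}_R(\Z_0)$ (resp.\ $\to h^{m+2}_R(\Z_0)$), which is precisely normal analyticity in the sense of the first Definition; ordinary complex-analyticity of $F_j$ is then automatic since the majorant series converges, hence so does the original one. This also re-derives analyticity of the individual $Z_n^j$ as bounded $n$-homogeneous functionals, consistent with \eqref{4.9}.

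The main obstacle I anticipate is the bookkeeping in the weight-distribution step: one has to be careful that ``$|j|\le n\max|i_l|$'' produces only a geometric constant $C^n$ and not something like $n!$, and that after extracting $|j|^{n-1}$ the leftover kernel is still in $\ell^2$ with the norm bound from Lemma~\ref{l4.2}(ii) — i.e.\ that the $\ell^2$ estimate survives multiplication by the (non-$\ell^2$) weight $\prod_l|i_l|^{-(\text{something})}$. The clean way around this is to note that we do not need to extract weights from $\mathcal K_n^j$ at all: we only need $\|\underline{Z_n}(v)\|_{m+n-1}$, and $\underline{Z_n}^j(v)\le\|\mathcal K_n^j\|_{\ell^2}\,(\sum_i\prod_l v_{i_l}^2)^{1/2}=\|\mathcal K_n^j\|_{\ell^2}\,\|v\|_0^n$ already when $v\in h^0_R$, while for $v\in h^m_R$ with $m\ge0$ one has $\|v\|_0\le\|v\|_m$, and the gain of $n-1$ derivatives on the output comes entirely from the factor $|j|^{1-n}$ in Lemma~\ref{l4.2}(ii): indeed $|j|^{2(m+n-1)}\|\mathcal K_n^j\|_{\ell^2}^2\le |j|^{2m}R^{2n}$, and since on $\Omega_j^{(n)}$ we have $|j|\le n\max|i_l|\le n\prod|i_l|$ (all $i_l\in\Z_0$, so $|i_l|\ge1$), the factor $|j|^{2m}$ is dominated by $n^{2m}\prod|i_l|^{2m}$, giving $\|\underline{Z_n}(v)\|_{m+n-1}^2\le n^{2m}R^{2n}\sum_j\sum_{i\in\Omega_j^{(n)}}\prod_l |i_l|^{2m}v_{i_l}^2\le n^{2m}R^{2n}\|v\|_m^{2n}$. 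Summing $n^{m}R^n\|v\|_m^n$ over $n\ge j-1$ converges for $\|v\|_m<1/R$, which finishes both the normal analyticity and, a fortiori, the analyticity of $F_j$, $j=2,3$.
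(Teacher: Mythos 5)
Your final argument (the ``clean way around'' in your last paragraph) is correct and is essentially the paper's own proof: both reduce the lemma to the bound $\|\underline{Z_n}(v)\|_{m+n-1}\le R^n\|v\|_m^n$ for all $n\ge2$, obtained from Lemma~\ref{l4.2}(i),(ii) via Cauchy--Schwarz over the simplex $\Omega_j^{(n)}$ together with the inequality $|j|\le n\prod_l|i_l|$ (the paper's \eqref{4.13}), the only difference being whether the weight $\Lambda^{2m}(i)$ is inserted before or after applying Cauchy--Schwarz. Your earlier heuristic of ``absorbing $|j|^{n-1}$ into $\prod_l|i_l|$'' would not survive scrutiny as stated, but you correctly identify and replace it, so the proof as finally written stands.
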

\begin{proof}
It is sufficient to show that for  any $n\geq 2$,
$$\|\underline{Z_n}(v)\|_{m+n-1}\leq R^n\|v\|_m^n,
\quad v\in h_R^m(\Z_0).$$
Here
$\underline{Z_n}(v)=(\underline{Z_n^j}(v),\, j\in \Z_0)$ and
$$\underline{Z_n^j}(v)=\sum\limits_{{i}=(i_1,i_2,\dots, i_n)\in \Z_0^n}
\big|{\cal K}_n^j({i})\big|v_{i_1}v_{i_2}\dots v_{i_n}.$$
From Lemma \ref{l4.2} (i), (ii) and the Cauchy-Schwartz inequality we get
\begin{equation*}\begin{split}
\big|\underline{Z_n^j}(v)\big|^2
&\leq \|{\cal K}_n^j\|_{l^2(\Z_0^n)}^2
\left(\sup\limits_{ i\in \Omega_j^{(n)}}\Lambda^{-2m}( i)\right)
\sum\limits_{ i=(i_1,\dots,i_n)\in\Omega_j^{(n)}}
\Lambda^{2m}( i)v^2_{i_1}v^2_{i_2}\dots v^2_{i_n}\\
&\leq \frac{R^{2n}}{|j|^{2n-2+2m}}
\sum\limits_{ i=(i_1,\dots,i_n)\in\Omega_j^{(n)}}
\Lambda^{2m}(i)v^2_{i_1}v^2_{i_2}\dots v^2_{i_n}.
\end{split}
\end{equation*}
Here $\Lambda( i)=|i_1||i_2|\dots |i_n|$
and at the last step we have used the inequality
\begin{equation}\label{4.13}
\sup\limits_{i\in \Omega_j^{(n)}}\Lambda^{-1}(i)\leq
{R^n}{|j|^{-1}}.
\end{equation}
Summing up with respect to $j$ one gets that
$
\|\underline{Z_n}(v)\|_{m+n-1}\leq R^n\|v\|_m^n$ for  $ n\geq 2$.
\end{proof}
\par Consider $dF_3(w)^t$ -- the real transposed of $dF_3(w)$ with respect to
the (standard complex) scalar product in $h^0(\Z_0)$:
$$(dF_3(w)h, \bar g)_{h^0(\Z_0)}=(h, \overline{dF_3(w)^tg})_{h^0(\Z_0)}.$$
Notice that for analytic maps the inequality of Lemma \ref{l1.3}
becomes an equality:
$$\underline{dF(|w|)|h|}=d\underline{F}(|w|)|h|,\quad
\underline{dF(|w|)^t|h|}=d\underline{F}(|w|)^t|h|.$$
\begin{lemma}\label{l4.4}
\begin{itemize}
\item[(i)] For $w\in {\cal O}_\delta(h^m)$,
$dF_3(w)^t\in {\cal L}(h^m, h^{m+2})$
and the map
$w\mapsto dF_3(w)^t$,
${\cal O}_\delta(h^m)\rightarrow  {\cal L}(h^m, h^{m+2})$,
is analytic;
\item[(ii)] similarly,
$d\underline{F_3}(v)^t$, $v\in {\cal O}_\delta(h_R^m)$, belongs to
${\cal L}(h^m_R, h^{m+2}_R)$
and the map $v\mapsto d\underline{F_3}(v)^t$,
${\cal O}_\delta(h^m_R)\rightarrow  {\cal L}(h^m_R, h^{m+2}_R)$,
is real analytic.
\end{itemize}
\end{lemma}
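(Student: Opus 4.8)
The plan is to reduce everything to the estimate on $\|{\cal B}_n^j\|_{l^2(\Z_0^n)}$ provided by Lemma~\ref{l4.2}~(iii), exactly as the proof of Lemma~\ref{l4.3} reduced to part~(ii). First I would write $dF_3(w)^t$ explicitly in terms of the kernels ${\cal K}_n^j$. Differentiating the series $F_3^j(w)=\sum_{n\ge3}Z_n^j(w)$ and taking the transpose, one finds that the matrix element of $dF_3(w)^t$ in the $(j,j_1)$ slot is
$$
\big(dF_3(w)^t\big)_{jj_1}
=\sum_{n\ge3} n\sum_{i_2,\dots,i_n\in\Z_0}{\cal K}_n^{j_1}(j,i_2,\dots,i_n)\,w_{i_2}\cdots w_{i_n}
=\sum_{n\ge3} n\sum_{i_2,\dots,i_n}{\cal B}_n^{j_1}(j,i_2,\dots,i_n)\,w_{i_2}\cdots w_{i_n}.
$$
This is the point of introducing ${\cal B}_n^j$: the transpose turns "$j$ in the upper index" into "$j$ as the first argument", which is precisely the combination Lemma~\ref{l4.2}~(iii) controls.

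Next I would estimate the operator norm of the homogeneous piece of degree $n-1$ in $w$. Fix $v\in h_R^m$ and a vector $h\in h_R^m$; I want to bound $\|d\underline{F_3}(v)^t h\|_{m+2}$. Writing out the $j$-th component and applying Cauchy--Schwartz in the variables $(i_2,\dots,i_n)$ together with the support condition of Lemma~\ref{l4.2}~(i) (so that $j_1=j+i_2+\cdots+i_n$, hence $\sum$ over $i_2,\dots,i_n$ is effectively a sum over the simplex), one gets for each $j$
$$
\Big|\big(d\underline{F_3}(v)^t |h|\big)_j\Big|^2
\le \Big(\sum_{n\ge3} n\,\|{\cal B}_n^{j_1}\|_{l^2}\,\|v\|_0^{\,n-1}\Big)\!\!\!\sum_{j_1,\,i_2,\dots,i_n}\!\!|{\cal B}_n^{j_1}|\;|h_{j_1}|^2\,|v_{i_2}|^2\cdots,
$$
schematically; using $\|{\cal B}_n^{j_1}\|_{l^2}\le R^n|j_1|^{-2}$ and then $|j|^{m+2}\le\mathrm{const}\,|j_1|^{2}\,\Lambda^{m}(i)\cdot(\text{something summable})$ — here one exploits that on the simplex $|j|\le|j_1|+|i_2|+\cdots+|i_n|$ and the extra two derivatives in the target are absorbed by the $|j_1|^{-2}$ decay of ${\cal B}_n^{j_1}$, while the remaining $m$ derivatives are distributed over the arguments just as in \eqref{4.13}. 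Summing in $j$ (after summing in $j_1,i_2,\dots,i_n$) and then in $n$ gives
$$
\|d\underline{F_3}(v)^t\|_{{\cal L}(h^m_R,h^{m+2}_R)}\le \sum_{n\ge3} C n R^n \|v\|_m^{\,n-1}<\infty
$$
for $\|v\|_m<\delta$ small, which is assertion~(ii). Assertion~(i) follows because $|dF_3(w)^t h|\le d\underline{F_3}(|w|)^t|h|$ componentwise (the displayed equality before the lemma, combined with Lemma~\ref{l1.3}), so the same bound holds for $dF_3(w)^t$; analyticity in $w$ (resp.\ $v$) is automatic from uniform convergence of the majorant series of operator norms.

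The main obstacle is purely bookkeeping: correctly routing the weights $|i_1|^{2m},\dots,|i_n|^{2m}$ and the two "gained" derivatives $|j|^{2(m+2)}$ through the Cauchy--Schwartz step so that what is left after using Lemma~\ref{l4.2}~(iii) is genuinely a finite geometric-type series in $n$. The key arithmetic fact one needs on the simplex $\Omega^{(n)}_{j_1}$ with $j_1=j+i_2+\cdots+i_n$ is the analogue of \eqref{4.13} together with $|j|^{2}\le 2|j_1|^{2}+2(|i_2|+\cdots+|i_n|)^{2}\le \mathrm{const}\, n\,|j_1|^{2}\big(1+\max_l|i_l|^{2}\big)$, and the polynomial loss in $n$ it produces is harmless against $R^n$. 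Once this combinatorial estimate is in place, both parts of the lemma follow in one stroke, and the analyticity statements are free by the normal-convergence of the majorants.
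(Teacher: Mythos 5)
Your overall strategy is exactly the paper's: reduce to the bound on $\|{\cal B}_n^j\|_{l^2(\Z_0^n)}$ from Lemma~\ref{l4.2}~(iii) by Cauchy--Schwartz on the simplex, in parallel with the proof of Lemma~\ref{l4.3}, and deduce (i) from (ii) via the majorant identity stated before the lemma. However, there is a concrete index slip in your key display. The $(j,j_1)$ entry of $dZ_n(w)^t$ is indeed $n\sum_{i_2,\dots,i_n}{\cal K}_n^{j_1}(j,i_2,\dots,i_n)w_{i_2}\cdots w_{i_n}$, but by the definition ${\cal B}_n^j(i_1,\dots,i_n)={\cal K}_n^{i_1}(j,i_2,\dots,i_n)$ this equals $n\sum_{i_2,\dots,i_n}{\cal B}_n^{\,j}(j_1,i_2,\dots,i_n)w_{i_2}\cdots w_{i_n}$, not ${\cal B}_n^{j_1}(j,i_2,\dots,i_n)$; the latter is ${\cal K}_n^{j}(j_1,\dots)$ and would reconstitute $dZ_n(w)h$ rather than its transpose. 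Equivalently, $\big(dZ_n(w)^th\big)_j=n\,B_n^j(h,w,\dots,w)$ with $B_n^j$ the form with kernel ${\cal B}_n^j$, which is how the paper sets it up.

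This matters because the whole point of item (iii) is that the $|j|^{-2}$ decay of $\|{\cal B}_n^j\|_{l^2}$ sits on the \emph{output} index $j$: applying Cauchy--Schwartz to $\underline{B}_n^j(h,v,\dots,v)$ with the support condition $(-i_1,i_2,\dots,i_n)\in\Omega_{-j}^{(n)}$ and \eqref{4.13} gives
$|j|^{2(m+2)}\big|\underline{B}_n^j\big|^2\le R^{2n}\sum_i\Lambda^{2m}(i)(h_{i_1}v_{i_2}\cdots v_{i_n})^2$,
and summing over $j$ yields \eqref{010} directly. The two extra derivatives are thus obtained for free, and the combinatorial inequality $|j|^2\le 2|j_1|^2+2(|i_2|+\cdots+|i_n|)^2$ that you flag as the ``main obstacle'' is not needed at all --- it is an artifact of having put the decay on the wrong index. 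Your schematic Cauchy--Schwartz display is also not dimensionally consistent as written (the kernel should appear squared in one factor and the weight $\Lambda^{2m}$ must be distributed before, not after, the kernel bound is used), but once the index identification is corrected the estimate collapses to the one in the paper's proof.
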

\begin{proof}
We have
$$
\big(dZ_n(w)^th\big)_j=nB_n^j(h,w,\dots, w),\quad
\big(d\underline{Z_n}(v)^tg\big)_j=n\underline{B}_n^j(g,v,\dots, v)
$$
Here
$B_n^j$ and $\underline{B}_n^j$ are the $n$-linear forms with the kernels
${\cal B}_n^j$ and $\big|{\cal B}_n^j\big |$ respectively:
$$B_n^j(w^1,\dots,w^n)=\sum\limits_{i=(i_1,\dots,i_n)\in \Z_0^n}
{\cal B}_n^j(i)w^1_{i_1}\dots w^n_{i_n},\quad w^k\in h^m \quad\forall\,k,$$
$$\underline{B}_n^j(v^1,\dots, v^n)=
\sum\limits_{i=(i_1,\dots,i_n)\in \Z_0^n}
\big|{\cal B}_n^j(i)\big|v^1_{i_1}\dots v^n_{i_n},
\quad v^k\in h^m_R \quad\forall\,k.$$
To prove the lemma it is sufficient to show that for $n\ge3$
 the poly-linear map
$$\underline{B}_n=
(\underline{B}_n^j,\,j\in \Z_0):
h_R^m\times\dots \times h_R^m\rightarrow h_R^{m+2}$$
is bounded and verifies:
\begin{equation}\label{010}
\|\underline{B}_n(v^1,\dots , v^n)\|_{m+2}\leq
R^n\prod\limits_{k=1}^n\|v^k\|_m,\quad v^k\in h_R^m,\,\, k=1,\dots, n.
\end{equation}
It follows from Lemma \ref{l4.2} (i)
that
\begin{equation*}
\begin{split}
\big |\underline{B}_n^j(v^1,\dots, v^n)\big|^2\leq
\|{\cal B}_n^j\|_{l^2(\Z_0^n)}^2
\big(\sup\limits_{i\in \Omega_{-j}^{(n)}}\Lambda^{-2m}(i)\big)\\
\times
\sum\limits_{{i=(i_1,\dots,i_n)\in \Z_0^n\atop
(-i_1,i_2,i_3,\dots, i_n)\in\Omega_{-j}^{(n)}}}
\Lambda^{2m}(i)\big (v^1_{i_1}v^2_{i_2}\dots v^n_{i_n}\big)^2.
\end{split}
\end{equation*}
Combining this inequality with (ii) of Lemma \ref{l4.2}
and using once more \eqref{4.13} we get \eqref{010} for any $n\ge3$.
\end{proof}

We next denote by $\tilde D$ the operator of
multiplication by the diagonal matrix
$\diag(|j|^{1/2},\, j\in \Z_0)$.
It defines isomorphisms
$\tilde D: h^r\rightarrow h^{r-1/2}$, $r\in \R$. Let us set
$m^\prime=m+\frac{1}{2}\geq\frac{1}{2} $. For any analytic germ
$H: h^m\rightarrow h^{m+a}$ we will denote by $H^{\tilde D}$ the germ
$H^{\tilde D}=
\tilde D^{-1}\circ H\circ \tilde D: h^{m^\prime}\rightarrow  h^{m^\prime+a} $
In particular, $F^{\tilde D}=\tilde D^{-1}\circ F\circ \tilde D$.
Due to Lemma \ref{l4.3}  one has
\begin{itemize}
\item[(a)]
$F^{\tilde D}: {\cal O}_\delta(h^{m^\prime})\rightarrow h^{m^\prime}$ is
n.a.;
\item[(b)]
$F^{\tilde D}-id=F_2^{\tilde D}:{\cal O}_\delta(h^m)\rightarrow h^{m+1}$
is n.a. and  $F_2^{\tilde D}(v)=O(v^2)$.
\end{itemize}
Notice that the operations $F\mapsto F^{\tilde D}$
and $F\mapsto \underline{F}$ commute.
\par Consider $(dF_2^{\tilde D})^t$, $(d\underline{F_2^{\tilde D}})^t$. We have
\begin{lemma}\label{l4.5}
\begin{itemize}
\item[(i)] For $v\in {\cal O}_\delta(h^{m^\prime})$,
$dF_2^{\tilde D}(v)^t\in {\cal L}(h^{m^\prime}, h^{m^\prime+1})$
and the map
$v\mapsto dF_2^{\tilde D}(v)^t$,
${\cal O}_\delta(h^{m^\prime})\rightarrow
{\cal L}(h^{m^\prime}, h^{m^\prime+1})$,
is analytic;
\item[(ii)] similarly,
$d\underline{F_2^{\tilde D}}(v)^t$,
$v\in {\cal O}_\delta(h_R^{m^\prime})$, belongs to
${\cal L}(h^{m^\prime}_R, h^{m^\prime+1}_R)$
and the map $v\mapsto d\underline{F_2^{\tilde D}}(v)^t$,
${\cal O}_\delta(h^{m^\prime}_R)\rightarrow
{\cal L}(h^{m^\prime}_R, h^{m^\prime+1}_R)$,
is real analytic.
\end{itemize}
\end{lemma}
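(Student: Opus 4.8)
The plan is to split $F_2^{\tilde D}=Z_2^{\tilde D}+F_3^{\tilde D}$, where $Z_2^{\tilde D}=\tilde D^{-1}\circ Z_2\circ\tilde D$ and $F_3^{\tilde D}=\tilde D^{-1}\circ F_3\circ\tilde D$, so that $dF_2^{\tilde D}(v)^t=dZ_2^{\tilde D}(v)^t+dF_3^{\tilde D}(v)^t$ and likewise for the majorants, and to treat the two summands separately. The term $F_3^{\tilde D}$ is the easy one: since $\tilde D$ is diagonal with real entries it is self-transposed for the bilinear pairing defining $(\,\cdot\,)^t$, so $dF_3^{\tilde D}(v)^t=\tilde D\circ dF_3(\tilde D v)^t\circ\tilde D^{-1}$. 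Here $\tilde D^{-1}$ maps $h^{m'}$ continuously into $h^{m'+1/2}=h^{m+1}\hookrightarrow h^m$, the operator $dF_3(\tilde D v)^t\in{\cal L}(h^m,h^{m+2})$ depends analytically on $v$ by Lemma~\ref{l4.4}(i) (after shrinking $\delta$ so that $\tilde D v\in{\cal O}_\delta(h^m)$), and $\tilde D$ maps $h^{m+2}$ onto $h^{m+3/2}=h^{m'+1}$; composing gives the analytic map $v\mapsto dF_3^{\tilde D}(v)^t\in{\cal L}(h^{m'},h^{m'+1})$, and the majorant statement for this piece follows identically from Lemma~\ref{l4.4}(ii). (In fact $dF_3^{\tilde D}(v)^t$ is even $2$-smoothing.)

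The term $Z_2^{\tilde D}$ carries the real work, and the plan is to mirror the proof of Lemma~\ref{l4.4} for the $n=2$ term and then conjugate by $\tilde D$. Since $Z_2$ is $2$-homogeneous, $dZ_2^{\tilde D}(v)^t$ is bilinear in $(v,g)$, hence linear in $v$ and automatically analytic, so only the bound $\|dZ_2^{\tilde D}(v)^tg\|_{m'+1}\le C\|v\|_{m'}\|g\|_{m'}$ is needed; because it is obtained by estimating through moduli of Taylor coefficients, the identity $\underline{dZ_2^{\tilde D}(\cdot)^t}=d\underline{Z_2^{\tilde D}}(\cdot)^t$ (valid for analytic maps, as recalled before Lemma~\ref{l4.4}) yields (ii) at the same time. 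From \eqref{4.12} the kernel of $Z_2$ is ${\cal K}_2^j(i_1,i_2)=-(2\sqrt\pi\,i_1i_2)^{-1}$ on the simplex $i_1+i_2=j$; re-indexing for the transpose exactly as in Lemma~\ref{l4.4}, the kernel of $dZ_2(w)^t$ lives on $\{i_1-i_2=j\}$ and has $l^2(\Z_0^2)$-norm $\le R^2|j|^{-1}$ — the $n=2$ analogue of Lemma~\ref{l4.2}(iii), but with the weaker exponent $-1$. Inserting the $\tilde D$-weights, $dZ_2^{\tilde D}(v)^t$ is the bilinear map $(v,g)\mapsto w$ with $|w_l|\le\const\sum_k|l|^{-1/2}|l+k|^{-1/2}|k|^{-1/2}|v_k|\,|g_{l+k}|$. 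Estimating $\|w\|_{m'+1}$ via $|l|^{m'+1}|w_l|\le\const\,|l|^{m'+1/2}\sum_k|l+k|^{-1/2}|k|^{-1/2}|v_k|\,|g_{l+k}|$ and then Cauchy--Schwarz with the data weighted by $|l+k|^{2m'}|k|^{2m'}$ (as in the proof of Lemma~\ref{l4.3}) reduces everything to the convolution bound
\begin{equation*}
\sum_{i_1-i_2=j}|i_1|^{-(2m'+1)}|i_2|^{-(2m'+1)}\le\const\,|j|^{-(2m'+1)},
\end{equation*}
which holds because $2m'+1>1$; taking square roots, this cancels the residual weight $|l|^{m'+1/2}$, and summation over $l$ gives the desired estimate.

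The main obstacle is exactly this last estimate. For $n\ge3$ Lemma~\ref{l4.2}(iii) provides the decay $|j|^{-2}$ with room to spare, but the $n=2$ term decays only like $|j|^{-1}$, so after conjugation by $\tilde D$ the estimate is borderline: a careless bound — for instance, putting the full $\tilde D$-weighted transpose kernel into an $l^2$-norm — loses a logarithm, since $\sum_{i_1-i_2=j}|i_1|^{-1}|i_2|^{-1}\sim|j|^{-1}\log|j|$. Distributing the weights so that the residual convolution sum carries the full exponent $2m'+1$ avoids the loss precisely when $m'>0$, and it is the extra half-derivative $m'=m+\tfrac12\ge\tfrac12$ built into the $\tilde D$-normalization that guarantees this. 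Away from this point the proof is routine bookkeeping along the lines of Lemmas~\ref{l4.3} and \ref{l4.4}.
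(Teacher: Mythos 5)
Your proof is correct and follows essentially the paper's own argument: the same splitting $F_2^{\tilde D}=Z_2^{\tilde D}+F_3^{\tilde D}$, the same conjugation identity $dF_3^{\tilde D}(v)^t=\tilde D\, dF_3(\tilde D v)^t\tilde D^{-1}$ handled via Lemma~\ref{l4.4}, and the same borderline estimate for the quadratic term with the same mechanism (the extra half-derivative from $\tilde D^{-1}$ making the convolution sum converge). The only cosmetic difference is that the paper packages your explicit Cauchy--Schwarz computation for $dZ_2^{\tilde D}(v)^t$ as the observation that $\underline{Z_2}(w)=\const\,\tilde D^{-2}w*\tilde D^{-2}w$ together with the continuity of convolution $h^r\times h^r\rightarrow h^r$ for $r>1/2$ (applied with $r=m'+1/2\geq 1$), which is precisely the bound you derive by hand.
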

\begin{proof}
As in the proof of Lemma \ref{l4.4},
it is sufficient to prove the statement, corresponding to
$d\underline{F_2^{\tilde D}}(v)^t$.
We write
$$F_2^{\tilde D}=Z_2^{\tilde D}+F_3^{\tilde D},\quad
\underline{F_2^{\tilde D}}=
\underline{Z_2^{\tilde D}}+\underline{F_3^{\tilde D}}.$$

Since $d\underline{F_3^{\tilde D}}(v)^t=
\tilde Dd\underline{F_3}^t(\tilde D v)\tilde D^{-1}$,
Lemma \ref{4.4} implies that
the map $v\mapsto d\underline{F_3^{\tilde D}}(v)^t$,
$ {\cal O}_\delta(h^{m^\prime}_R)\rightarrow
{\cal L}(h^{m^\prime-1}_R,h^{m^\prime+1}_R)$, is real analytic.
Therefore it is also real analytic as a map from
${\cal O}_\delta(h_R^{m^\prime})$
to ${\cal L}(h^{m^\prime}_R,h^{m^\prime+1}_R)$.
\par Next consider
$\underline{Z_2^{\tilde D}}=\tilde D^{-1}\underline{Z_2}\tilde D$.
Note that $\underline{Z_2}(w)=\const\, \tilde D^{-2}w*\tilde D^{-2}w$.
Accordingly,
$d\underline{Z_2}(w)(f)=\const\, \tilde D^{-2}w*\tilde D^{-2}f$,
and
$$d\underline{Z_2^{\tilde D}}(v)^t(f)=\const\tilde D^{-1}
\big(\tilde D^{-1}v*\tilde D^{-1}f\big).$$
Since $\tilde D^{-1}: h^{m^\prime}\rightarrow  h^{m^\prime+1/2}$,
where $m^\prime+1/2\geq 1$, and since
the convolution defines a continuous bilinear
map $h^r\times h^r\rightarrow h^r$ if $r>1/2$,
then we have
$$\|d\underline{Z_2^{\tilde D}}(v)^t(f)\|_{m^\prime+1}
\leq C_m\|v\|_{m^\prime}\|f\|_{m^\prime}.$$
So the map $h^{m^\prime}\ni v\mapsto
d\underline{Z_2^{\tilde D}}(v)^t\in
{\cal L}(h^{m^\prime}_R, h^{m^\prime+1}_R)$
is bounded, which concludes the proof of
Lemma \ref{l4.5}.
\end{proof}
\par We are now in position to finish the proof of Theorem \ref{t3.1}.
Define $\Psi:
{\cal O}_\delta(h^{m^\prime}(\N))\rightarrow h^{m^\prime}(\N)$ by restricting
$F^{\tilde D}$ on
the subspace of real sequences
$\{v=(v_j,\,j\in\Z_0)\in h^{m^\prime}(\Z_0),\,\, v_j=\overline{v_{-j}}\}$
(that is on the real potentials $u$) and further projecting
it on the positive indices:
$$\Psi=\pi F^{\tilde D}\circ \pi^{-1}.$$
Here $\pi:h^{m^\prime}(\Z_0)\rightarrow h^{m^\prime}(\N)$ is the projection:
$$\pi: v=(v_j,\,j\in \Z_0)\mapsto \pi v=(v_j,\,j\geq 1),$$
and $\pi^{-1}:  h^{m^\prime}(\N)\rightarrow h^{m^\prime}(\Z_0)$ is the
right inverse map:
$$\pi^{-1}:v=(v_j,\,j\geq 1)\mapsto v^\prime=(v^\prime_j,\,j\in \Z_0),$$
$$
v^\prime_j=v_j \,\,{\rm for}\,\, j\geq 1, \,\,v^\prime_j=\overline{v_{-j}}\,\,
\rm{for}\,\, j\leq -1.$$
Clearly,
$\Psi:{\cal O}_\delta(h^{m^\prime})\rightarrow h^{m^\prime}$,
$h^{m^\prime}=h^{m^\prime}(\N)$, is a real analytic map of the form
$\Psi=id+\Psi_2$,
where $\Psi_2=\pi F_2^{\tilde D}\circ \pi^{-1}=O(v^2)$,
and
$\Psi_2$ is real analytic as a map from
${\cal O}_\delta(h^{m^\prime})$ to $h^{m^\prime+1}$.
Furthermore, since
$\underline{\Psi}=\pi\underline{F^{\tilde D}}\circ\pi^{-1}$,
$\underline{\Psi_2}=\pi\underline{F^{\tilde D}_2}\circ\pi^{-1}$,
then
$\Psi$ and $\Psi_2$ are n.a. In addition, one has
\begin{lemma}\label{l4.6}
$\Psi_2\in\A_{m^\prime,1}$.
\end{lemma}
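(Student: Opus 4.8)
The plan is to check the three requirements in the definition of $\A_{m',1}$ one by one, most of the work being already in place. Recall that $F\in\A_{m',1}$ means: $F$ is a n.a. germ, $F=O(v^2)$, and the real adjoint $dF(v)^*$ obeys \eqref{1.5} with a majorant operator $\Phi_F(|v|)\in{\cal L}(h^{m'}_R,h^{m'+1}_R)$ depending analytically on $|v|$. The first two properties have been recorded in the text just before the statement: $\Psi=\id+\Psi_2$ with $\Psi_2=\pi F_2^{\tilde D}\circ\pi^{-1}=O(v^2)$, the map $\Psi_2:{\cal O}_\delta(h^{m'})\to h^{m'+1}$ is real analytic, and $\underline{\Psi_2}=\pi\underline{F_2^{\tilde D}}\circ\pi^{-1}$ is n.a.\ by Lemma~\ref{l4.3}. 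So the only point left is \eqref{1.5}, and this is exactly where the estimate of Lemma~\ref{l4.5}(ii) is used.

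The key step is to reduce $\underline{d\Psi_2(|v|)^*\,|h|}$ to the quantity $d\underline{F_2^{\tilde D}}(v)^t$ bounded in Lemma~\ref{l4.5}. First I would recall that by Lemma~\ref{l4.1}(i) the map $F^{\tilde D}$, hence $F_2^{\tilde D}$, preserves the real subspace $\Lambda=\{w\in h^{m'}(\Z_0):w_j=\overline{w_{-j}}\ \forall j\}$ of symmetric sequences; that $\Psi_2$ is precisely the restriction $F_2^{\tilde D}|_\Lambda$ written in the coordinates given by the real isomorphism $\pi:\Lambda\to h^{m'}(\N)$, which up to the scalar $\sqrt2$ is an isometry for the real scalar products, since $\Re\sum_{j\in\Z_0}w_j\bar w'_j=2\langle\pi w,\pi w'\rangle$ on $\Lambda$; and that consequently $d\Psi_2(v)^*$ is, through this coordinate change, the real adjoint of the complex-linear operator $dF_2^{\tilde D}(\pi^{-1}v)$. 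The real adjoint of a complex-linear operator is its Hermitian adjoint, whose matrix is the complex conjugate of the transpose $dF_2^{\tilde D}(\pi^{-1}v)^t$; thus passing from $d\Psi_2(v)^*$ to $dF_2^{\tilde D}(w)^t$ only conjugates and permutes (via the reflection $j\leftrightarrow-j$ built into $\pi^{-1}$) the Taylor coefficients of $F_2^{\tilde D}$, and such operations are invisible to the Cauchy-majorant operation $\underline{(\cdot)}$, which sees only the moduli $|{\cal B}_n^j|$.

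Then I would invoke the remark recorded before Lemma~\ref{l4.4} that for complex-analytic maps the inequality of Lemma~\ref{l1.3} becomes an equality, so that $\underline{d\Psi_2(|v|)^*\,|h|}$ equals, modulo the $\pi$--$\pi^{-1}$ bookkeeping above, $d\underline{F_2^{\tilde D}}(|v|)^t\,|h|$. By Lemma~\ref{l4.5}(ii), $v\mapsto d\underline{F_2^{\tilde D}}(v)^t$ is a real-analytic germ ${\cal O}_\delta(h^{m'}_R)\to{\cal L}(h^{m'}_R,h^{m'+1}_R)$, and its matrix entries are non-negative, being the absolute values $|{\cal B}_n^j|$ of the kernels. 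Transporting this operator by $\pi$ gives the required $\Phi_{\Psi_2}(|v|)\in{\cal L}(h^{m'}_R,h^{m'+1}_R)$, analytic in $|v|$, and \eqref{1.5} holds with $\kappa=1$; this finishes the proof.

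The step I expect to be the main obstacle is the bookkeeping in the second paragraph: keeping straight the complex structure on $h^{m'}(\Z_0)$, the merely real subspace $\Lambda$, the non-complex-linear identification $\pi$, and the two distinct notions of adjoint — the real adjoint with respect to $\langle\cdot,\cdot\rangle=\Re\sum$ appearing in the definition of $\A_{m',1}$, versus the complex transpose $(\cdot)^t$ used in Lemmas~\ref{l4.4} and \ref{l4.5}. The resolution is the one indicated above: all these operations act on the array of Taylor coefficients of $F_2^{\tilde D}$ only through index reflections $j\leftrightarrow-j$ and complex conjugation, neither of which changes absolute values, so they commute with $\underline{(\cdot)}$ and leave the majorant estimate of Lemma~\ref{l4.5}(ii) intact.
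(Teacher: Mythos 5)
Your proposal is correct and follows essentially the same route as the paper: the n.a.\ property and $\Psi_2=O(v^2)$ are taken as already established, and condition \eqref{1.5} is obtained from the identity $\underline{d\Psi_2(|v|)^*|f|}=\pi\, d\underline{F_2^{\tilde D}}(\pi^{-1}|v|)^t\,\pi^{-1}|f|$ (i.e.\ $dF_2^{\tilde D}(\pi^{-1}v)^*=\overline{dF_2^{\tilde D}(\pi^{-1}v)^t}$, with conjugation and the reflection $j\leftrightarrow -j$ invisible to the majorant) together with Lemma~\ref{l4.5}(ii). Your extra bookkeeping about the real subspace $\Lambda$ and the two notions of adjoint is exactly the point the paper leaves implicit, and it is handled correctly.
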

\begin{proof}
We already know that
$\Psi_2:{\cal O}_\delta(h^{m^\prime})\rightarrow h^{m^\prime+1}$
is n.a. and $\Psi_2(v)=O(v^2)$.
Moreover, since
$d\Psi_2(v)^*g=\pi dF_2^{\tilde D}(\pi^{-1}v)^*\pi^{-1}g$,
where $dF_2^{\tilde D}(\pi^{-1}v)^*=\overline{dF_2^{\tilde D}(\pi^{-1}v)^t}$,
the map $v\mapsto d\Psi_2(v)^*$,
${\cal O}_\delta(h^{m^\prime})
\rightarrow {\cal L}(h^{m^\prime},h^{m^\prime+1})$ is
real analytic  by Lemma \ref{l4.5}.
Finally, the representation
$\underline{d\Psi_2(|v|)^*|f|}=\Phi(|v|)|f|$, where
$\Phi:{\cal O}_\delta(h_R^{m^\prime})
\rightarrow {\cal L}(h^{m^\prime}_R,h^{m^\prime+1}_R)$ is
real analytic and  $\big(\Phi(v)\big)_{jk}\geq 0$
for $v=|v|$, required by the definition of $\A_{m^\prime,1}$,
follows from the identity
$$\underline{d\Psi_2(|v|)^*|f|}=\pi
d\underline{F_2^{\tilde D}}(\pi^{-1}|v|)^t\pi^{-1}|f|$$
and item (ii) of Lemma \ref{l4.5}.
\end{proof}
To finish the proof of Theorem \ref{t3.1} it remains to
note that assertion ii)
follows from Lemma \ref{l4.1} ii). Indeed, if $v=\tilde D^{-1} {\cal F}(u)$,
then
\begin{equation*}
\big|\Psi^j(v)\big|^2=j^{-1}|z_j(u)|^2=
\pi|j|^{-1}(\la_{2j}(u)-\la_{2j-1}(u))^2=\pi|j|^{-1}\gamma_j^2(u).
\end{equation*}
This concludes the proof of Theorem \ref{t3.1}.
\par It remains to prove Lemma \ref{l4.2}.
We will obtain it as a consequence of
\begin{lemma}\label{l4.7}
$Z_n^j(w)$ (see \eqref{4.7}) can be represented as
\begin{equation}\label{4.90}
Z_n^j(w)=\sum\limits_{{i}=(i_1,i_2,\dots, i_n)\in \Z_0^n}
\tilde{\cal K}_n^j({i})w_{i_1}w_{i_2}\dots w_{i_n},
\end{equation}
where for $n\ge2$
$\tilde {\cal K}_n^j$ satisfies
\begin{equation}\label{4.16}
\begin{aligned}
\big|\tilde {\cal K}_n^j(i)\big|&\leq R^n{\cal A}_n^j(i),\quad
{\cal A}_n^j(i)=\delta_{j,\sum_{l=1}^ni_l}\,a_n^j(i_1,\dots,i_{n-1}),\\
a_n^j(i_1,\dots, i_{n-1})=&
\prod\limits_{l=1}^{n-1}
\Big(\left<\sum_{k=1}^l i_k\right>\left<\sum_{k=1}^l i_k-j\right>\Big)^{-1}
\end{aligned}
\end{equation}
Here $<j>=(1+j^2)^{1/2}$.
\end{lemma}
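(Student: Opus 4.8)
The plan is to prove Lemma~\ref{l4.7} by a careful inductive analysis of the perturbation series for the spectral projection $P_j(u)$ and the eigenfunctions $f_j(u)$, tracking the precise form of the small denominators. Recall from \eqref{4.10} that $f_j(u)$ is obtained by iterating the resolvent $(L_0-j^2/4)^{-1}(I-P_{|j|0})$ against multiplication by $u$. In Fourier variables, multiplication by $u=\mathcal F^{-1}(w)$ sends $e^{ikx/2}$ to a combination of $e^{i(k+2l)x/2}$ with coefficients proportional to $w_l$, while the resolvent $(L_0-j^2/4)^{-1}$ acting on $e^{ipx/2}$ (with $p\not\equiv\pm j$) produces the factor $(p^2/4-j^2/4)^{-1}=4/((p-j)(p+j))$, i.e.\ precisely a factor $\langle p-j\rangle^{-1}\langle p+j\rangle^{-1}$ up to a bounded constant once $p\ne\pm j$. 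Starting from $f_{j0}\sim e^{-ijx/2}$ and applying $\ell$ steps of ``resolvent $\circ$ multiplication,'' the index after $\ell$ steps of absorbing Fourier modes $i_1,\dots,i_\ell$ is $-j+2(i_1+\dots+i_\ell)$ (in the $e^{i\cdot x/2}$ normalization), so the accumulated denominator is exactly $\prod_{l=1}^{\ell}\langle\sum_{k\le l}i_k\rangle\langle\sum_{k\le l}i_k-j\rangle$ up to harmless constants — matching $a_n^j$ in \eqref{4.16}. The support constraint $\delta_{j,\sum i_l}$ comes from the bilinear structure of $z_j(u)=-\sqrt\pi((L_u-j^2/4)f_j,\overline{f_j})$ together with the fact that each $z_j$ is built from $f_j$ whose modes shift the frequency by the absorbed $w$-indices, and that the pairing against $\overline{f_j}$ forces the total frequency shift to vanish.

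The main steps are as follows. First I would set up the Neumann/Kato series for $P_j(u)$ and hence for $U_j(u)$ and $f_j(u)=U_{|j|}(u)f_{j0}$, writing the $n$-th order term as a sum over ordered products of $(\ell+1)$ factors of $u$ separated by $\ell$ copies of the reduced resolvent, plus lower-complexity corrections coming from expanding $(I-(P_j-P_{j0})^2)^{-1/2}$ and from the normalization. Second, I would insert these expansions into \eqref{4.5} and collect the $n$-homogeneous part, organizing the result as a sum over the ``frequency path'' $(i_1,\dots,i_{n-1})$ (the last index being determined by $\delta_{j,\sum i_l}$). Third, I would verify that every denominator that appears is, up to a uniformly bounded multiplicative constant and possibly after dropping some factors (which only helps, since we want an upper bound), dominated by the product in $a_n^j$; the resonant terms (where some partial sum equals $0$ or $j$, so a naive factor would blow up) must be handled by noting that in those cases the corresponding contribution is actually absent — the projector $(I-P_{|j|0})$ kills exactly the mode $-j+2\sum i_k$ when the shifted index hits $\pm j$, and the genuinely diagonal contributions (like the second sum in \eqref{4.100}) carry a $w_j$ factor and their own admissible denominators. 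Fourth, summing the finitely many combinatorial contributions at each order $n$ produces the geometric constant $R^n$, giving \eqref{4.16}.

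From Lemma~\ref{l4.7} I then derive Lemma~\ref{l4.2}. Part (i) is immediate from the $\delta_{j,\sum i_l}$ factor. For part (ii), I would estimate $\|\mathcal K_n^j\|_{l^2(\Z_0^n)}\le R^n\|\mathcal A_n^j\|_{l^2}$ and bound $\|\mathcal A_n^j\|_{l^2(\Z_0^n)}^2=\sum_{i_1+\dots+i_n=j}a_n^j(i_1,\dots,i_{n-1})^2$. Writing $s_l=\sum_{k\le l}i_k$, this is a sum over the free variables $s_1,\dots,s_{n-1}\in\Z$ (with $s_n=j$ and $i_l=s_l-s_{l-1}$, all $i_l\ne0$) of $\prod_{l=1}^{n-1}\langle s_l\rangle^{-2}\langle s_l-j\rangle^{-2}$, which factorizes; each factor $\sum_{s\in\Z}\langle s\rangle^{-2}\langle s-j\rangle^{-2}$ is $O(|j|^{-2})$ uniformly in $j$ by the standard resonance estimate $\sum_s\langle s\rangle^{-2}\langle s-j\rangle^{-2}\le C|j|^{-2}$, so the product is $\le C^{n}|j|^{-2(n-1)}$ and taking square roots gives $\|\mathcal K_n^j\|_{l^2}\le R^n|j|^{1-n}$. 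For part (iii), $\mathcal B_n^j(i_1,\dots,i_n)=\mathcal K_n^{i_1}(j,i_2,\dots,i_n)$, so $|\mathcal B_n^j(i)|\le R^n\delta_{i_1,\,j+i_2+\dots+i_n}\,a_n^{i_1}(j,i_2,\dots,i_{n-1})$; now the $\delta$ fixes $i_1$ in terms of $i_2,\dots,i_n$, and $a_n^{i_1}(j,i_2,\dots,i_{n-1})$ still contains at least two resonant factors involving $j$ — in particular the $l=1$ factor $\langle j\rangle^{-1}\langle j-i_1\rangle^{-1}$ and one further factor — whose $l^2$-sum over the remaining free variables yields the extra $|j|^{-2}$; I expect the bookkeeping here to be the only genuinely delicate point, since one must be sure the two ``$j$-factors'' are really present and are summable to $|j|^{-2}$ rather than being eaten by the shift of base point. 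The rest is the same factorized geometric-series estimate as in (ii).

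The hard part will be step three above: correctly identifying the small denominators in the Kato series and checking that the apparent resonances (vanishing partial sums $s_l\in\{0,j\}$) never actually produce a divergent term, because those are precisely the frequencies removed by the complementary projector $I-P_{|j|0}$. Getting the combinatorics of the $(I-(P_j-P_{j0})^2)^{-1/2}$ factor right — so that it does not spoil the denominator structure and only contributes admissible, lower-order terms — is the other place where care is required.
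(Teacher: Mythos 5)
Your proposal is essentially correct and targets the same objects as the paper -- the perturbation series for $P_j(u)$ and $f_j(u)=(I-P_{|j|1}^2)^{-1/2}(I+P_{|j|1})f_{j0}$, inserted into \eqref{4.5}, with the kernel bound read off from the accumulated small denominators along the frequency path $s_l=\sum_{k\le l}i_k$ -- but it differs in the one technical choice that matters. The paper never evaluates the resolvents at the eigenvalue $j^2/4$: it keeps the Riesz-projection contour integrals, expands $P_{|j|1}(u)=-\frac{1}{2\pi i}\sum_k\oint_{\gamma_{|j|}}T^k(u,\la)(L_0-\la)^{-1}d\la$ with $T(u,\la)=(L_0-\la)^{-1}u$, and bounds the integrand \eqref{4.60} directly on the contour. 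Since $|\mu_m-j^2/4|=\delta_0|j|$ there, every denominator $(s_m-j/2)^2-\mu_m=s_m(s_m-j)+(j^2/4-\mu_m)$ is bounded below by $c\,\langle s_m\rangle\langle s_m-j\rangle$ \emph{uniformly}, including at the resonant values $s_m\in\{0,j\}$ where it equals $\delta_0|j|\sim\langle j\rangle=\langle s_m\rangle\langle s_m-j\rangle$. This is exactly what makes the issue you flag as ``the hard part'' disappear: there is no case distinction between resonant and non-resonant partial sums, and the factor $(I-P_{|j|1}^2)^{-1/2}(I+P_{|j|1})$ is handled by expanding in powers $c_qP_{|j|1}^q$ with $c_q\ge0$ and treating each $P_{|j|1}^{q}$ by the same contour estimate, the sum over the finitely many $(p,\beta)$-configurations at order $n$ being absorbed into $R^n$. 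Your alternative -- evaluating the contour integrals by residues to get the Kato/Rayleigh--Schr\"odinger series with reduced resolvents $S=(L_0-j^2/4)^{-1}(I-P_{|j|0})$ and projector insertions $P_{|j|0}$ -- does work, but it genuinely requires the compensation argument you only gesture at: a $P_{|j|0}$-slot contributes no denominator while $a_n^j$ demands a factor $\langle s_l\rangle^{-1}\langle s_l-j\rangle^{-1}=\langle j\rangle^{-1}$ there (the projector forces $s_l\in\{0,j\}$), and this must be recovered from the extra powers of $S$ that necessarily accompany each projector insertion (in the Kato formula $\sum k_i=n$ over $n+1$ slots, so an $S^2$ somewhere balances each $P_{|j|0}$), using $\langle s'\rangle\langle s'-j\rangle\ge c\langle j\rangle$. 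That bookkeeping closes, so your route is viable, but it is strictly more delicate than the paper's; if you pursue it, make that compensation explicit rather than leaving it as an expectation. Your derivation of Lemma~\ref{l4.2}(ii) from \eqref{4.16} (factorizing the $l^2$-sum over the free partial sums and using $\sum_s\langle s\rangle^{-2}\langle s-j\rangle^{-2}\le C|j|^{-2}$) matches what the paper leaves to the reader and is correct; for (iii) your identification of the two $j$-dependent factors is the right idea, though the shifted base points mean the second $|j|^{-1}$ comes from summing a factor of the form $\langle j+i_2\rangle^{-1}\langle\cdot\rangle^{-1}$, which again needs to be written out.
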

\noindent
{\it Remark.} The difference between representation \eqref{4.9}
and \eqref{4.90} is that $\tilde {\cal K}_n^j(i)$ are
not required  to be symmetric functions.
\par Clearly, item (i) of Lemma \ref{l4.2} follows trivially from Lemma
\ref{l4.7}.
It is also not difficult to check that
\eqref{4.16} leads to the following estimates
$$\|\tilde{\cal K}_n^j\|_{l^2(\Z_0^n)}\leq
\frac{R^n}{|j|^{n-1}},\quad n\geq 2,\quad
\sup\limits_{1\leq l\leq n}
\|{\cal B}_{n,l}^j\|_{l^2(\Z_0^n)}\leq \frac{R^n}{|j|^{2}},\quad n\geq 3,$$
$${\cal B}_{n,l}^j(i_1,\dots, i_n)=
\tilde{\cal K}_n^{i_l}(i_1,\dots,i_{l-1},j, i_{l+1},\dots,i_n),$$
which in turn imply (ii), (iii) of Lemma \ref{l4.2}.
So it remains to establish Lemma~\ref{l4.7}.
\par {\it Proof of Lemma \ref{l4.7}}.
First notice that for $n=2,3$ the representation
\eqref{4.90}, \eqref{4.16} follows directly from the explicit formulas
\eqref{4.12}, \eqref{4.100}. The general case can be treated as follows.
Consider $z_j(u)$ (see \eqref{4.5}) and write it as the sum
$z_j(u)=z_{j,1}(u)+z_{j,2}(u)$, where
\begin{align}
z_{j,1}(u)=&-\sqrt{\pi}\big((L_0-j^2/4)f_j(u),\overline{f_j(u)}\big),
\label{4.501}\\
z_{j,2}(u)=&\sqrt{\pi}\big(uf_j(u),\overline{f_j(u)}\big),\label{4.51}
\end{align}
The functions $f_j(u)$, $j\in \Z_0$,  were defined in \eqref{4.50}.
Now it is convenient for us to write them as
\begin{equation}\label{4.52}
f_j(u)=\big(I-P_{|j|1}^2\big)^{-1/2}(I+P_{|j|1})f_{j0}, \quad
P_{|j|1}(u)=P_{|j|}(u)-P_{|j|0}.
\end{equation}
The Taylor expansions for $z_{j,k}(u)$, $k=1,2,$
have the form
$$z_{j,1}(u)=\sum\limits_{n\geq2}Z_{n}^{j,1}(w),\quad
z_{j,2}(u)=\sum\limits_{n\geq1}Z_{n}^{j,2}(w),\quad w={\cal F}(u),$$
where $Z_n^{j,k}$ are bounded $n$-homogeneous functionals on $h^0(\Z_0)$,
and $Z_n^j=Z_n^{j,1}+Z_n^{j,2}$.

 We next  compute explicitly  $Z_n^{j,k}(w)$.
From \eqref{4.52} we have
\begin{equation}\label{4.53}
f_j(u)=\sum\limits_{q\geq 0}c_qP^q_{|j|1}(u)f_{j0},
\end{equation}
where $c_q={\varphi^{(q)}(0)}/{q!}$ for  $\varphi(x)=(1-x^2)^{-1/2}(1+x)$.
Note that $c_q\geq 0$.
\par Further expanding $P_{|j|1}(u)$ in the power series  of $u$:
$$P_{|j|1}(u)=-\frac{1}{2\pi i}\sum\limits_{k\geq 1}
\oint\limits_{\gamma_{|j|}}T^k(u,\la)\big(L_0-\la\big)^{-1}d\la,
\quad T(u,\la)=(L_0-\la)^{-1}u,$$
and substituting this expansion into \eqref{4.53}
we get
$$f_j(u)=f_{j0}+\sum\limits_{n\geq 1}
\sum\limits_{1\leq q\leq n}c_q
\sum\limits_{{\alpha=(\a_1,\dots\a_q)\in\N^q,\atop |\a|=n}}f_{j,q}^\a(u),$$
$$f_{j,q}^\a(u)=$$
$$\left(\frac{i}{2\pi}\right)^q\oint\limits_{\gamma_{|j|}}
\dots\oint\limits_{\gamma_{|j|}}
T^{\a_1}(u,\la_1)(L_0-\la_1)^{-1}
\dots
T^{\a_q}(u,\la_q)(L_0-\la_q)^{-1}f_{j0}d\la_1\dots d\la_q.$$
Substituting this series into \eqref{4.501}
and replacing $u$ by
$\frac{1}{2\sqrt{\pi}}\sum\limits_{j\in \Z_0}w_je^{ijx}$,
we arrive at the following representation for $Z_n^{j,1}$:
\begin{equation*}\begin{split}
Z_n^{j,1}(w)&=\sum\limits_{{i}=(i_1,i_2,\dots, i_n)\in \Z_0^n}
\tilde{\cal K}_n^{j,1}(i)w_{i_1}w_{i_2}\dots w_{i_n},\\
\tilde {\cal K}_n^{j,1}(i)&=
\sum\limits_{{ p=(p_1,p_2)\in\N^2,\atop |p|\leq n}}c_{p_1}c_{p_2}
\sum\limits_{{\beta=(\b_1,\dots,\b_{| p|})\in\N^{|p|},\atop|\b|=n}}
{\cal S}^{j,1}_{p,\b}( i),\quad n\geq 2.
\end{split}
\end{equation*}
Here
\begin{equation*}\begin{split}
&{\cal S}^{j,1}_{p,\b}( i)=\\
&=-
\left(\frac{1}{2\sqrt{\pi}}\right)^{n-1}
\left(\frac{i}{2\pi}\right)^{| p|}
\oint\limits_{\gamma_{|j|}}\dots\oint\limits_{\gamma_{|j|}}
s_{ p,\b}^{j,1}(i,\la)d\la_1\dots d\la_{|p|},
\quad \la=(\la_1,\dots,\la_{|p|})
\end{split}
\end{equation*}
and
\begin{equation}\label{4.60}
\begin{split}
&s_{p,\b}^{j,1}(i,\la)=\\
&=\delta_{j,\sum\limits_{l=1}^n i_l}
\left(\prod\limits_{m=1}^n\frac{1}{\big(\sum_{l=1}^m i_l-j/2\big)^2-\mu_m}
\right)
\left(\prod\limits_{m=1}^{|p|-1}
\frac{1}{\big(\sum_{l=1}^{\b_1+\dots+\b_m}i_l-j/2\big)^2-\la_{m+1}}\right)\\
&\quad\times\frac{\big(\sum_{l=1}^{\b_1+\dots+\b_{p_1}}i_l-j/2\big)^2-j^2/4}
{j^2/4-\la_1}
\end{split}
\end{equation}
with some $\mu_m=\mu_m(\la;p,\b)\in\gamma_{|j|}$.
\par Since for any $r$, we have $|\la_r-j^2/4|,\,|\mu_r-j^2/4|=\delta_0|j|$,
where
$\delta_0>0$ small, one deduces from \eqref{4.60} that
\begin{equation*}\label{4.29}
|s_{p,\b}^{j,1}(i,\la)|\leq
\frac{C^n}{<j>^{|p|}}{\cal A}_n^j(i).
\end{equation*}
As a consequence, one gets
$\big |{\cal S}^{j,1}_{p,\b}(i)\big|\leq C^n{\cal A}_n^j(i)$,
so that
$$\big |\tilde {\cal K}_n^{j,1}(i)\big|\leq C^n{\cal A}_n^j( i)
\sum\limits_{{p=(p_1,p_2)\in\N^2,\atop |p|\leq n}}c_{p_1}c_{p_2}
\sum\limits_{{\beta=(\b_1,\dots,\b_{|p|})\in\N^{|p|},\atop|\b|=n}}1
\leq C^n{\cal A}_n^j(i).$$
Similar computations can be performed for $Z_n^{j,2}$, $n\geq 2$.
As a result, one gets the representation
$$Z_n^{j,2}(w)=\sum\limits_{{i}=(i_1,i_2,\dots, i_n)\in \Z_0^n}
\tilde{\cal K}_n^{j,2}({i})w_{i_1}w_{i_2}\dots w_{i_n},$$
with some $\tilde{\cal K}_n^{j,2}$, satisfying the same estimate as
 $\tilde{\cal K}_n^{j,1}$:
$\ \big |\tilde {\cal K}_n^{j,2}(i)\big|\leq C^n{\cal A}_n^j(i).$
This concludes the proof of Lemma \ref{l4.7}. ~~~~~~~~~~~~~~~~~~~~~~~~$\square$
\medskip

\noindent {\it Remark}. The map $\Psi$ preserves the form $\omega_0$ up to  terms of order
$v^2$:
\begin{equation*}
\Psi^*\omega_0(v)=J(v)dv\wedge dv,\quad J(v)=i+O(v^2).
\end{equation*}
Indeed, it follows from \eqref{4.12} that
$$\Psi(v)=v+\psi_2(v)+O(v^3),$$
where
$\psi_2=(\psi_2^j,\,j\in\N)$ is given by
\begin{equation}\label{5.20}
\psi_2^j(v)=\frac{1}{2\sqrt{\pi j}}\sum\limits_{k\in\Z_0,\,k\neq j}
\frac{|k|^{1/2}|j-k|^{1/2}}{k(k-j)}v^\prime_kv^\prime_{j-k}
\end{equation}
$$
v^\prime_j=v_j \,\,{\rm for}\,\, j\geq 1, \,\,v^\prime_j=\bar v_{-j}\,\,
\rm{for}\,\, j\leq -1.$$
Computing $\Psi^*\omega_0$ we get
$$\Psi^*\omega_0(v)=(i+O(v^2))dv\wedge dv+d\left(\frac{i}{2}\psi_2(v)dv\right).$$
Note that for $\forall\,\, j,k\in\N$
$$\frac{\p \psi_2^j}{\p\bar v_k}=\frac{\p \psi_2^k}{\p\bar v_j},\qquad
\frac{\p \psi_2^j}{\p v_k}=-\frac{\p \bar \psi_2^k}{ \p \bar v_j}\,.$$
Therefore, $d(i\psi_2(v)dv)=0$ and $\Psi^*\omega_0(v)=(i+O(v^2))dv\wedge dv$.

\bibliography{vey}

\bibliographystyle{amsalpha}

\end{document}